\documentclass[reqno,a4paper,oneside]{amsart}

\usepackage{amsmath, amsthm, amscd, amsfonts, amssymb, graphicx, color,mathrsfs,mathtools,enumerate}
\usepackage{lineno}
\usepackage{dsfont}
\modulolinenumbers[5]

\usepackage{bm}
\usepackage{graphicx}%
\usepackage{multirow}%
\usepackage{amsmath,amssymb,amsfonts}%
\usepackage{amsthm}%
\usepackage{mathrsfs}%
\usepackage[title]{appendix}%
\usepackage{xcolor}%
\usepackage{textcomp}%
\usepackage{manyfoot}%
\usepackage{booktabs}%
\usepackage{algorithm}%
\usepackage{algorithmicx}%
\usepackage{algpseudocode}%
\usepackage{listings}%

\usepackage[latin1,utf8]{inputenc}
\usepackage[english]{babel}
\usepackage{eucal,amsfonts,amssymb,amsmath,amsthm,epsfig,mathrsfs}
\usepackage{cancel,soul}
\usepackage{color}
\textheight 8.8in
\textwidth 5.8in
\voffset -0.15in
\hoffset -0.4in
\usepackage{amscd,amsxtra}
\usepackage{enumerate}
\usepackage{latexsym}
\usepackage{bm}
\usepackage{setspace}
\usepackage{textcomp}


\newtheorem{thm}{Theorem}[section]
\newtheorem{lemma}[thm]{Lemma}
\newtheorem{prop}[thm]{Proposition}
\newtheorem{coro}[thm]{Corollary}

\newtheorem{hyp1}[thm]{Hypotheses}

\newtheorem{rmk}[thm]{Remark}

\newtheorem{defn}[thm]{Definition}

\newcommand{\N}{\mathbb N}
\newcommand{\R}{\mathbb R}
\newcommand{\X}{\mathcal{X}}
\newcommand{\K}{\mathcal{K}}

\newcommand{\Id}{{\operatorname{I}}}
\newcommand{\norm}[1]{{\left\|#1\right\|}}
\newcommand{\scal}[2]{{\left\langle #1,#2\right\rangle}}

\makeatletter
\@namedef{subjclassname@2020}{%
  \textup{2020} Mathematics Subject Classification}
\makeatother

\begin{document}

\frenchspacing

\title[Pathwise uniqueness in infinite dimension under weak structure condition]{Pathwise uniqueness in infinite dimension under weak structure condition}

\author[D. Addona and D. A. Bignamini]{Davide Addona, Davide A. Bignamini}

\address{D. Addona: Dipartimento di Scienze Matematiche, Fisiche e Informatiche, Universit\`a degli studi di Parma, Parco Area delle Scienze 53/a (Campus), 43124 PARMA, Italy}
\email{\textcolor[rgb]{0.00,0.00,0.84}{da.bignamini@uninsubria.it}}

\address{D. A. Bignamini: Dipartimento di Scienza e Alta Tecnologia (DISAT), Universit\`a degli Studi dell'In\-su\-bria, Via Valleggio 11, 22100 COMO, Italy}
\email{\textcolor[rgb]{0.00,0.00,0.84}{da.bignamini@uninsubria.it}}


\begin{abstract}
Let $U,H$ be two separable Hilbert spaces and $T>0$. We consider a stochastic differential equation which evolves in the Hilbert space $H$ of the form
\begin{align}
\label{SDEa}
dX(t)=AX(t)dt+\widetilde{\mathscr L}B(X(t))dt+GdW(t), \quad t\in[0,T], \quad X(0)=x \in H,    
\end{align}
where $A:D(A)\subseteq H\to H$ is the infinitesimal generator of a strongly continuous semigroup $(e^{tA})_{t\geq0}$, $W=(W(t))_{t\geq0}$ is a $U$-cylindrical Wiener process defined on a normal filtered probability space $(\Omega,\mathcal{F},\{\mathcal{F}_t\}_{t\in [0,T]},\mathbb{P})$, $B:H\to H$ is a bounded and $\theta$-H\"older continuous function, for some suitable $\theta\in(0,1)$, and  $\widetilde{\mathscr L}:H\to H$ and $G:U\to H$ are linear bounded operators. We prove that, under suitable assumptions on the coefficients, the weak mild solution to equation \eqref{SDEa} depends on the initial datum in a Lipschitz way.
This implies that, for \eqref{SDEa}, pathwise uniqueness holds true. Here, the presence of the operator $\widetilde{\mathscr L}$ plays a crucial role. In particular the conditions assumed on the coefficients cover the stochastic damped wave equation in dimension $1$ and the stochastic damped Euler--Bernoulli Beam equation up to dimension $3$ even in the hyperbolic case.
\end{abstract}


\keywords{pathwise uniqueness; regularization by noise; H\"older continuous drift; It\^o-Tanaka trick; stochastic damped wave equation; stochastic heat equation}


\subjclass[2020]{60H15,60H50}

\maketitle

\section{Introduction}
Let $U,H$ be two separable Hilbert spaces and $T>0$. We consider a SDE which evolves in the Hilbert space $H$ of the form
\begin{align}
\label{SDE}
dX(t)=AX(t)dt+\widetilde{\mathscr L}B(X(t))dt+GdW(t), \quad t\in[0,T], \quad X(0)=x \in H,    
\end{align}
where $A:D(A)\subseteq H\to H$ is the infinitesimal generator of a strongly continuous semigroup $(e^{tA})_{t\geq0}$, $W=(W(t))_{t\geq0}$ is a $U$-cylindrical Wiener process defined on a normal filtered probability space $(\Omega,\mathcal{F},\{\mathcal{F}_t\}_{t\in [0,T]},\mathbb{P})$, $B:H\to H$ is a bounded and $\theta$-H\"older continuous function, for some suitable $\theta\in(0,1)$, and  $\widetilde{\mathscr L}:H\to H$ and $G:U\to H$ are linear bounded operators.

We assume that for every $T>0$ and every $x\in H$ equation \eqref{SDE} admits a unique weak (mild) solution $X=(X(t,x))_{t\in[0,T]}$ which satisfies, for every $t\in[0,T]$,
\begin{align}
\label{mild_sol}
X(t,x)=e^{tA}x+\int_0^t e^{(t-s)A}\widetilde{\mathscr L} B(X(s,x))ds+\int_0^t e^{(t-s)A}GdW(s), \qquad \mathbb P\textup{-a.s.}    
\end{align}
We are investigating the pathwise uniqueness of the weak mild solution $X$ to the SDE given by \eqref{SDE}. Specifically, we aim to extend the results in \cite{AddBig, Dap-Fla2014} to cases where $A$ is not necessarily the generator of an analytic semigroup. We underline that if $(e^{tA})_{t\geq 0}$ is a compact semigroup then the existence of a weak mild solution to \eqref{SDE} is guaranteed, see Remark \ref{Weak}.
To be more precise, we prove a stronger result which implies that pathwise uniqueness holds for \eqref{SDE}. Indeed, we show that there exists a positive constant $C_T$ such that for every $x,y\in H$, if $X(,\cdot,x)$ and $X(\cdot,y)$ are weak mild solutions to \eqref{SDE}, with initial datum $x$ and $y$, respectively, defined on the same probability space $(\Omega,\mathcal F, \mathbb P)$ with respect to the same cylindrical Wiener process $W$, then
\begin{equation}
\label{Heat-lip}
\sup_{t\in [0,T]}\mathbb{E}\left[\norm{X(t,x)-X(t,y)}_H\right]\leq C_T\norm{x-y}_H,\qquad x,y\in H.
\end{equation}

Furthermore, we generalize the results in \cite{AddMasPri23} by removing the assumption that $G:U\rightarrow H$ is a Hilbert-Schmidt operator and weaken the so called {\it structure condition} $\widetilde{\mathscr L}=G$. The novelty of this paper allows us to establish pathwise uniqueness for the stochastic hyperbolic damped wave equation, a case not covered in the results of \cite{AddBig,AddMasPri23,Dap-Fla2010}.  Moreover exploiting  the result in \cite{On04} we also establish strong existence to \eqref{SDE}, see Corollary \ref{Strong}.

Before specifically describing the results in this paper, we provide a brief overview of the literature concerning pathwise uniqueness.

In the recent literature, the problem of pathwise uniqueness for stochastic evolution equations of the form of \eqref{SDE} has been extensively explored, see, for instance, \cite{Cer-Dap-Fla2013, Dap-Fla2010, Dap-Fla2014, DPFPR13, DPFPR15, DPFRV16, MasPri17, MasPri23, Pri2021, Zam00}. One of the primary reasons for this interest lies in the relationship between pathwise uniqueness and existence of strong solutions to SDEs, shown in \cite{Ya-Wa71} in finite dimension and generalized for stochastic differential equations with values in $2$-smooth Banach spaces in \cite{On04}. 
The Zvonkin transformation and the It\^o-Tanaka trick are two fundamental tools to obtain pathwise uniqueness for SDEs like \eqref{SDE}, see for instance \cite{FlGuPr10, Kr-Ro05,Ver,Zv74}. Here, we exploit a modification of the It\^o-Tanaka trick, similar to that introduced in \cite{AddBig}. 

In the following subsections, we provide a detailed comparison between the results presented in this paper and those already known in the literature.

\subsubsection{Stochastic damped wave equation}
Let $T>0$. We consider the following stochastic semilinear damped wave equation
\begin{align}
\label{eq_damped}
\left\{
\begin{array}{ll}
\displaystyle \frac{\partial^2y}{\partial t^2}(t)
= y''(t)-\rho \left(-D_{xx}^2\right)^{\alpha}\left(\frac{\partial y}{\partial t}(t)\right)+\left(-D_{xx}^2\right)^{-\sigma}c\left(\cdot, y(t),\frac{\partial y}{\partial t} (t)\right)+(\left(-D_{xx}^2\right)^{\alpha})^{-\gamma}\dot{W}(t), 
\vspace{1mm} \\
y(0)=y_0 \in L^2(0,\pi), & \vspace{1mm} \\
\displaystyle \frac{\partial y}{\partial t}(0)=y_1 \in L^2(0,\pi),
\end{array}
\right.
\end{align}
where $t\in [0,T]$, $D_{xx}^2$ is the realization of the second-order derivative in $L^2(0,\pi)$ with Dirichlet homogeneous boundary conditions, $\{W(t)\}_{t\geq 0}$ is a $L^2(0,\pi)$-cylindrical Wiener process, $c:[0,\pi]\times \R^2\to \R$ is a measurable function, for a.e.$ \xi\in[0,\pi]$ the function $c(\xi,\cdot,\cdot):\R^2\to \R$ is continuous, there exist $c_1\in L^\infty([0,\pi])$, $c_2\in L^2([0,\pi])$ and $\theta\in(0,1)$ such that $|c(\xi,x)-c(\xi,y)|\leq c_1(\xi)|x-y|^\theta$ and $|c(\xi,x)|\leq c_2(\xi)$ for every $x,y\in \R$ and a.e. $\xi\in[0,\pi]$, $\alpha\in(0,1)$ and $\rho,\sigma,\gamma\geq0$ are nonnegative parameters. 
Assume that one of the following sets of assumptions is verified:
\begin{itemize}
\item[\rm(i)]  $\alpha\in\left(0,\frac12\right]$, $\gamma\in\left(\frac14-\frac\alpha2,\frac\alpha2\right)$, $\theta\in\left(\frac23\cdot\frac{\gamma+\alpha}{\alpha},1\right)$ and $\sigma>\frac12-\alpha$;
\item[\rm(ii)] $\alpha\in\left[\frac12,1\right)$, $\gamma\in\left(\frac14-\frac\alpha2,\frac12-\frac\alpha2\right)$, $\theta\in\left(\frac23\cdot \frac{\gamma+1-\alpha}{1-\alpha},1\right)$ if $\gamma+2\alpha<\frac32$, $\theta\in\left(\frac{4\gamma+2\alpha-1}{2\gamma+\alpha},1\right)$ if $\gamma+2\alpha\geq \frac32$, and $\sigma>0$.
\end{itemize}
If further $\rho\neq 2^{2-2\alpha}n^{1-2\alpha}$ for every $n\in\N$, then, for mild solutions to  \eqref{eq_damped}, inequality \eqref{Heat-lip} holds true with $H=L^2(0,\pi)\times L^2(0,\pi)$ and $X=\left((-D^2_{xx})^{-\frac12}y,\frac{\partial}{\partial t}y\right)$, see Corollary \ref{dampedalfa}.  We emphasize that case (i) is not addressed by the results in \cite{AddBig,AddMasPri23} and, to the best of our knowledge, represents a novel contribution to the literature. Specifically,  in \cite{AddBig} this case is not covered, since for $\alpha<\frac12$ the operator associated to the damped wave equation does not generate an analytic semigroup. This gap is filled thanks to the presence of the operator $(-D_{xx}^2)^{-\sigma}$, which allows to recover the needed regularity. On the other hand, in \cite{AddMasPri23}, the assumption that in \eqref{SDE} the operator $G$ is a Hilbert–Schmidt operator implies $\gamma>\frac14$ in \eqref{eq_damped} and so $\alpha> \frac12$.

We stress that, under assumptions (ii) (with $\sigma=0$), pathwise uniqueness for \eqref{eq_damped} has been proved in \cite{AddBig}. However, the techniques exploited in \cite{AddBig} do not allow to obtain a Lipschitz dependence on the initial data, as estimate \eqref{Heat-lip} implies.

\subsubsection{Stochastic damped Euler--Bernoulli Beam equation}
Let $T>0$. We focus on the following stochastic semilinear damped Euler-Bernoulli beam equation
   \begin{align}
\label{Eulero-Beam}
\left\{
\begin{array}{ll}
\displaystyle \frac{\partial^2y}{\partial t^2}(t)
= -\Delta^2 y(t)-\rho (\Delta^2)^{\alpha}\left(\frac{\partial y}{\partial t}(t)\right)+(\Delta^2)^{-\sigma}c\left(\cdot, y(t),\frac{\partial y}{\partial t}(t)\right)+(\Delta^2)^{-\gamma}\dot{W}(t), \ t\in(0,T],
\vspace{1mm} \\
y(0)=y_0\in L^2([0,2\pi]^m), & \vspace{1mm} \\
\displaystyle \frac{\partial y}{\partial t}(0)=y_1 \in L^2([0,2\pi]^m),
\end{array}
\right.
\end{align}
where $\alpha\in (0,1)$, $\rho,\gamma,\sigma\geq 0$, $c:[0,\pi]\times \R^2\rightarrow\R$ is as above and $\Delta$ is the realization of the Laplace operator with periodic boundary conditions in $L^2([0,2\pi]^m)$ with $m=1,2,3$.

In \cite[Example 6.1]{AddMasPri23}, pathwise uniqueness for \eqref{Eulero-Beam} is established when $m=1$, $\sigma=\gamma$ and $(\Delta^{-2})^\gamma$ is a trace-class operator and it is proved that estimate \eqref{Heat-lip} holds true with $H=L^2([0,2\pi])\times L^2([0,2\pi])$. In this paper, we gain the same result removing the condition that $\Delta^{-2\gamma}$ is a trace class operator, $\sigma$ is not necessarily equal to $\gamma$ and $m$ can be $2$ or $3$ (see Corollary \ref{coro:damped_euler_beam} for the precise statement). Moreover, we show that if $\Delta^{-2\gamma}$ is a trace class operator then the following estimate, stronger than \eqref{Heat-lip}, holds true:
\begin{equation*}
\mathbb{E}\bigg[\sup_{t\in [0,T]}\norm{X(t,x)-X(t,y)}_H\bigg]\leq C_T\norm{x-y}_H,\qquad x,y\in H.
\end{equation*}

\subsubsection{Stochastic heat equation}
Let $T>0$ and consider the SPDE
\begin{align}\label{Heat}
\left\{
\begin{array}{ll}
\displaystyle  dX(t)=\Delta X(t)dt+(-\Delta)^{-\sigma}B(X(t))dt+(-\Delta)^{-\gamma}dW(t), \quad t\in[0,T],  \vspace{1mm} \\
X(0)=x\in H,
\end{array}
\right.
\end{align}
where $\gamma,\sigma\geq 0$, $B\in C_b^\theta(H;H)$ and $\Delta$ is the realization of the Laplace operator with periodic boundary conditions in $H=L^2([0,2\pi]^m)$ with $m=1,2,3$. Under suitable assumptions on $\gamma$, pathwise uniqueness for \eqref{Heat} is established in \cite{AddBig, Dap-Fla2014}. Also in \cite{AddMasPri23} pathwise uniqueness for \eqref{Heat} is proved in the case $\sigma=\gamma$ and the Lipschitz estimate \eqref{Heat-lip} is provided. The main result of this paper shows that \eqref{Heat-lip} holds true even if $\sigma<\gamma$, in particular, if $m=1,2$ then it is sufficient that $\sigma>0$.

\section{Notations}
\label{sec:notation}
 Let $\K$ be a Banach space endowed with the norm $\norm{\cdot}_\K$. We denote by $\mathcal{B}(\K)$ the Borel $\sigma$-algebra associated to the norm topology in $\K$. 

Let $(\Omega,\mathcal{F},\mathbb{P})$ be a probability space and let $\xi:(\Omega,\mathcal{F},\mathbb{P}) \rightarrow (\K,\mathcal{B}(\K))$ be a random variable. We denote by 
\[
\mathbb{E}[\xi]:=\int_\Omega \xi(w)\mathbb{P}(d\omega)=\int_\K x[\mathbb{P}\circ\xi^{-1}](dx)
\]
the expectation of $\xi$ with respect to $\mathbb{P}$. Let $\{Y(t)\}_{t\geq 0}$ be a $\K$-valued stochastic process defined on a normal filtered probability space $(\Omega,\mathcal{F},\{\mathcal{F}_t\}_{t\geq 0},\mathbb{P})$. We say that $\{Y(t)\}_{t\geq 0}$ is pathwise continuous $a.s.$ (almost surely) if there exists $\Omega_0\in\mathcal{F}$ such that $\mathbb{P}(\Omega_0)=1$ and for every $\omega\in\Omega_0$ the function $t\rightarrow Y(t)(\omega)$ is continuous. 

Let $\X$ be a separable Hilbert space and let $\{g_k\}_{k\in\N}$ be an orthonormal basis of $\X$. We call $\X$-cylindrical Wiener process a stochastic process $\{W(t)\}_{t\geq 0}$ defined on a normal filtered probability space $(\Omega,\mathcal{F},\{\mathcal{F}_t\}_{t\geq 0},\mathbb{P})$, such that 
\[
W(t):=\sum_{k=1}^{\infty} \beta_k(t)g_k \qquad \forall t\geq0,
\]
where $\{\beta_n(t)\}_{t\geq 0}$, $n\in\N$, are real independent Brownian motions on $(\Omega,\mathcal{F},\{\mathcal F_t\}_{t\geq0},\mathbb{P})$. We recall the definitions of weak and strong existence for \eqref{SDE}.

\begin{defn}\label{weak-solution}
A weak (mild) solution to \eqref{SDE} is a couple $(X,W)$ where $W=\{W(t)\}_{t\in [0,T]}$ is a $U$-cylindrical Wiener process defined on a filtered probability space $(\Omega,\mathcal{F},\{\mathcal{F}_t\}_{t\in [0,T]},\mathbb{P})$ and $X=\{X(t,x)\}_{t\in[0,T]}$ is a $H$-valued $\{\mathcal{F}_t\}_{t\in [0,T]}$-adapted stochastic process such that for every $t\in [0,T]$ 
\begin{align*}
X(t,x)=e^{tA}x+\int_0^te^{(t-s)A}B(X(s,x))ds+W_A(t),\qquad \mathbb{P}{\rm -a.s.},    
\end{align*}
where $\{W_A(t)\}_{t\geq 0}$ is the stochastic convolution process given by 
\begin{equation*}
W_A(t):=\int^t_0e^{(t-s)A}GdW(s),\qquad \mathbb P{\rm-a.s.}
\end{equation*}
for every $t\geq0$.\\
Moreover, we say that the strong existence holds true for \eqref{SDE}, if for for every $U$-cylindrical Wiener process $W=\{W(t)\}_{t\in [0,T]}$ defined on a filtered probability space $(\Omega,\mathcal{F},\{\mathcal{F}_t\}_{t\in [0,T]},\mathbb{P})$, there exists a process $X=\{X(t,x)\}_{t\in[0,T]}$ such that $(W,X)$ is a weak mild solution to \eqref{SDE}.
\end{defn}

Let $\K_1$ and $\K_2$ be two real separable Banach spaces equipped with the norms $\norm{\cdot}_{\K_1}$ and $\norm{\cdot}_{\K_2}$, respectively.
We denote by $\Id_{\K_1}$ the identity operator on $\K_1$. Let $n\in\N$.
We denote by $\mathcal{L}^{(n)}(\K_1;\K_2)$ the set of multilinear continuous maps from $\K^n_1$ to $\K_2$, if $n=1$ we write $\mathcal{L}(\K_1;\K_2)$, and if $\K_1=\K_2$ we write $\mathcal{L}^{(n)}(\K_1)$.

We denote by $B_b(\K_1;\K_2)$ the set of bounded and Borel measurable functions from $\K_1$ into $\K_2$. If $\K_2=\R$, then we simply write $B_b(\K_1)$. We denote by $C_b(\K_1;\K_2)$ ($UC_b(\K_1;\K_2)$, respectively) the space of bounded and continuous (uniformly continuous, respectively) functions from $\K_1$ into $\K_2$. If $\K_2=\R$, then we simply write $C_b(\K_1)$ and $UC_b(\K_1)$. We endowed $C_b(\K_1;\K_2)$ and $UC_b(\K_1;\K_2)$ with the norm
\[
\norm{f}_{\infty}=\sup_{x\in \K_1}\|f(x)\|_{\K_2}.
\]

We denote by $C_b^\alpha(\K_1;\K_2)$ the subspace of $C_b(\K_1;\K_2)$ of the $\alpha$-H\"older continuous functions. The space $C_b^\alpha(\K_1;\K_2)$ is a Banach space if endowed with the norm
\begin{align*}
\norm{f}_{C_b^\alpha(\K_1;\K_2)}:=\norm{f}_\infty+[f]_{C_b^\alpha(\mathcal{K}_1;\mathcal{K}_2)},
\end{align*}
where $[\cdot]_{C_b^\alpha(\mathcal{K}_1;\mathcal{K}_2)}$ denotes the standard seminorm on $C_b^\alpha(\mathcal{K}_1;\mathcal{K}_2)$.
If $\K_2=\R$, then we simply write $C_b^\alpha(\K_1)$.

Let $k\in\N$ and let $f:\K_1\rightarrow \K_2$ be a $k$-times Fr\'echet differentiable function. We denote by $D^k f(x)$ its Fr\'echet derivative of order $k$ at $x\in\K_1$. 
For $k\in\N$, we denote by $C_b^{k}(\K_1;\K_2)$ ($UC^k_b(\K_1;\K_2)$, respectively) the space of bounded, uniformly continuous and $k$ times Fr\'echet differentiable functions $f:\K_1\rightarrow\K_2$ such that $D^if\in C_b(\K_1;\mathcal{L}^{(i)}(\K_1;\K_2))$ ($D^if\in UC_b(\K_1;\mathcal{L}^{(i)}(\K_1;\K_2))$, respectively), for $i=1,\ldots,k$. We endow $C_b^{k}(\K_1;K_2)$ and $UC_b^{k}(\K_1;K_2)$  with the norm
\[
\norm{f}_{C_b^{k}(\K_1;\K_2)}:=\norm{f}_{\infty}+\sum^k_{i=1}\sup_{x\in\K_1}\|D^if(x)\|_{\mathcal{L}^{(i)}(\K_1;\K_2)}.
\]
We denote by $C_b^{0,2}([0,T]\times\K_1;\K_2)$ the space of functions $f:[0,T]\times\K_1\rightarrow\K_2$ such that $f(t,\cdot)\in C^2_b(\K_1;\K_2)$ for any $t\in [0,T]$ and $f(\cdot,x)\in C([0,T];\K_2)$ for any $x\in \K_1$. If $\K_2=\R$, then we simply write $C_b^{0,2}([0,T]\times\K_1)$. 

Let $\X$ be a separable Hilbert space equipped with the inner product $\scal{\cdot}{\cdot}_{\X}$.
If $f\in C^2_b(\X)$, then we denote by $\nabla f (x)$ and $\nabla^2f(x)$ the Fr\'echet gradient and Hessian at $x\in\X$, respectively. We say that $Q\in\mathcal{L}(\X)$ is \emph{non-negative} (\emph{positive}) if for every $x\in \X\backslash\{0\}$
\[
\langle Qx,x\rangle_\X\geq 0\ (>0).
\]
On the other hand, $Q \in \mathcal{L}(\X)$ is a \emph{non-positive} (respectively, \emph{negative}) operator if $-Q$ is non-negative (respectively, positive). Let $Q\in\mathcal{L}(\X)$ be a non-negative and self-adjoint operator. We say that $Q$ is a trace-class operator if
\begin{align}\label{trace_defn}
{\rm Trace}_{\mathcal X}[Q]:=\sum_{n=1}^{+\infty}\langle Qe_n,e_n\rangle_\X<\infty
\end{align}
for some (and hence for all) orthonormal basis $\{e_n:n\in\N\}$ of $\X$. We recall that the definition of trace-class operator given in \eqref{trace_defn}, is independent of the choice of the orthonormal basis. 
Let $\mathcal Y$ be another separable Hilbert space and let $R\in\mathcal{L}(\X;\mathcal Y)$. We say that $R$ is a Hilbert-Schmidt operator if 
\[
\norm{R}^2_{\mathcal L_2(\X;\mathcal Y)}:=\sum^{+\infty}_{k=1}\norm{Rg_k}^2_{\mathcal Y}<\infty,
\]
for some (and hence for all) orthonormal basis $\{g_k:k\in\N\}$ of $\X$. It follows that if $R$ is a Hilbert-Schmidt operator, then $RR^*$ and $R^*R$ are trace-class operators and
\begin{align*}
{\rm Trace}_{\mathcal Y}[RR^*]={\rm Trace}_{\mathcal X}[R^*R]=\|R\|^2_{\mathcal L_2(\mathcal X;\mathcal Y)}.    
\end{align*}

\section{Abstract results}
In this section, we will prove the main results of this paper. We will exploit an approach based on the finite-dimensional approximations previously introduced in \cite{AddBig}. In Subsection \ref{preliminari}, we state the main assumption of the paper and provide some preliminary result, while Subsection \ref{proof} is devoted to prove the main results of this paper. 

\subsection{Finite dimensional approximation and preliminary results}\label{preliminari}
The following assumptions allow us to exploit the approximation technique of \cite{AddBig}.

\begin{hyp1}\label{hyp:finito-dimensionale}
The following conditions hold true.
\begin{enumerate}[\rm(i)]
\item $A:{\rm Dom}(A)\subseteq H\rightarrow H$
is the infinitesimal generator of a strongly continuous semigroup.

\item $G:U\rightarrow H$ is a bounded linear operator, such that
\begin{equation}\label{Gtilda}
G=\widetilde{G}\mathcal{V}
\end{equation}
where $\widetilde{G}\in\mathcal{L}(U;H)$ and $\mathcal{V}\in \mathcal{L}(U)$.

\item $B:H\rightarrow H$ is a bounded and $\theta$-H\"older continuous function with $\theta\in (0,1)$ such that
\begin{equation}\label{Btilda}
B=\widetilde{G}\widetilde{B},
\end{equation}
where $\widetilde{B}\in C_b^\theta(H;U)$.

\item $\widetilde{\mathscr L}: H\rightarrow H$ is a linear bounded operator and there exists $\mathcal{K}\in \mathcal{L}(U)$ such that 
\begin{equation}\label{commutazione}
\widetilde{\mathscr L}\widetilde{G}=\widetilde{G}\mathcal{K}.
\end{equation}
\item There exists $\eta\in (0,1)$ such that for every $t>0$ we have
\[
\int^t_0\frac{1}{s^\eta}{\rm Trace}_H\left[e^{sA}GG^*e^{sA^*}\right]ds<\infty.
\]

\item \label{Accan}There exists a sequence of finite-dimensional subspaces $\{H_n\}_{n\in\N}\subseteq H$ such that $H=\overline{\cup_{n\in\N}H_n}$, $H_0:=\{0\}$ and for every $n\in\N$ we have
\begin{align*}
& H_{n-1}\subseteq H_{n}, \qquad 
H_{n-1}\subseteq {\rm Dom}(A),\qquad A(H_{n}\backslash H_{n-1})\subseteq \left(H_{n}\backslash H_{n-1}\right)\cup \{0\}.
\end{align*}
We further assume that $\widetilde{\mathscr L}$ and the orthogonal projection on $H_n$ commute.

\item\label{contrin} For every $t>0$ we have
\begin{align}
   &e^{tA}(H)\subseteq Q^{\frac12}_t(H),\qquad\qquad \qquad\qquad\quad Q_{t}:=\int^t_0e^{sA}GG^*e^{sA^*}ds;\label{contron}\\
&\int^t_0\norm{\Gamma_s}^{1-\theta}_{\mathcal{L}(H)}\|\Gamma_s\widetilde{G}\|_{\mathcal{L}(U;H)}ds<\infty, \quad\qquad\Gamma_t:=Q^{-\frac12}_te^{tA}\label{supercontron},
\end{align}
where $s\mapsto \norm{\Gamma_s}^{1-\theta}_{\mathcal{L}(H)}$ and $\mapsto\|\Gamma_s\widetilde{G}\|_{\mathcal{L}(U;H)}$ are bounded from below functions in $(0,t)$ for every $t>0$. Further, we assume that there exists $\theta'<\theta$ such that
\begin{align*}
\int^t_0\norm{\Gamma_s}^{1-\theta'}_{\mathcal{L}(H)}ds<\infty.  
\end{align*}
\end{enumerate}
\end{hyp1}

\begin{rmk}
Let $\mathcal{O}$ be a suitable bounded subset of $\R^d$ and let $\Delta$ be the realization of the Laplace operator in $L^2(\mathcal{O})$ with suitable boundary conditions. In the applications of Section \ref{Examples} we choose $\mathcal{K}=(-\Delta)^{-\sigma}$ and $\mathcal{V}=(-\Delta)^{-\gamma}$ for some $\sigma>0$ and $\gamma\geq 0$. In particular, when we consider perturbed versions of the eat equation we can set $U=H$ and $\widetilde{G}=\Id_H$, see  Subsection \ref{Heatcase}. For more details on our assumptions see \cite[Remark 3.5]{AddBig}.
\end{rmk}

\begin{rmk}\label{Weak}
We refer to \cite{Cho-Gol1995,Kun2013,Kun2013-2} for general assumptions which guarantee the weak well-posedness of \eqref{SDE}. In particular, \cite[Proposition 3]{Cho-Gol1995} guarantees that if $(e^{tA})_{t\geq 0}$ is a compact semigroup then there exists a weak mild solution to \eqref{SDE}. For a discussion on weak uniqueness we refer to \cite{BerOrrSca2024, Pri2021} for the stochastic heat equation with singular drift and to \cite{Han2024} for the stochastic wave equation with multiplicative noise.
\end{rmk}

For every $n\in\N$, we set
\begin{align}\label{coefficienti-n}
A_n:=AP_n,\quad \widetilde{G}_n:=P_n \widetilde{G},\quad G_n=\widetilde{G}_n\mathcal{V}, \quad \widetilde{B}_n=\widetilde{B}\circ P_n,\quad B_n=\widetilde{G}_n\widetilde{B}_n,\quad \widetilde{\mathscr L}_n=P_n\widetilde{\mathscr L},
\end{align}
by \eqref{commutazione} and Hypotheses \ref{hyp:finito-dimensionale}\eqref{Accan}, for every $n\in\N$ we have
\begin{equation}\label{commutazione-n}
\widetilde{\mathscr L}_n\widetilde{G}_n=\widetilde{G}_n\mathcal{K}.
\end{equation}
We recall the following facts:
\begin{align}
& e^{tA}_{|H_n}=e^{tA_n} \qquad \forall t\geq0,\label{P5}\\
&\norm{\widetilde{\mathscr L}_n}_{\mathcal{L}(H)}\leq \norm{\widetilde{\mathscr L}}_{\mathcal{L}(H)} \qquad \forall n\in\N, \label{stimaopn}\\
&\norm{B_n}_{C^\theta_b(H_n;H_n)}\leq \norm{B}_{C^\theta_b(H;H)} \qquad \forall n\in\N.
\label{holdrBn}
\end{align}
Fix $T>0$, $x\in H$ and let $\{X(t,x)\}_{t\in [0,T]}$ be a weak mild solution to \eqref{SDE}.
For every $n\in\N$ we introduce the process $X_n=(X_n(t,x))_{t\in[0,T]}$ defined, for every $t\in[0,T]$, as
\begin{align}
\label{mild_sol_n}
X_n(t,x)=e^{tA_n}P_nx+\int_0^t e^{(t-s)A_n}\widetilde{\mathscr L}_n B_n(X(s,x))ds+W_{A,n}(t),\qquad \mathbb P{\rm -a.s.,}
\end{align}
where 
\begin{equation}\label{csn}
W_{A,n}(t)=\int_0^te^{tA_n}G_ndW(s).
\end{equation}
In particular, $X_n$ fulfills
\begin{align}
\label{mild_sol_n_var}
& dX_n(t,x)=A_nX_n(t,x)dt+\widetilde{\mathscr L}_nB_n(X(t,x))dt+G_ndW(t), \quad t\in[0,T], \qquad  X_n(0,x)=P_nx. \end{align}

\begin{prop}
Assume that Hypotheses \ref{hyp:finito-dimensionale} hold true. Therefore, for every fixed $T>0$ and $x\in H$ we have
\begin{equation}\label{supE}
\lim_{n \to \infty}\sup_{t\in [0,T]}\mathbb{E}\left[\|X_{n}(t,x)-X(t,x)\|_H^2\right]dt=0.
\end{equation} 
In addition, if $G\in\mathcal L_2(U;H)$, then for every fixed $T>0$ and $x\in H$ we have
\begin{equation}\label{Esup}
\lim_{n \to \infty}\mathbb{E}\Big[\sup_{t\in [0,T]}\|X_{n}(t,x)-X(t,x)\|_H^2\Big]dt=0.
\end{equation} 
\end{prop}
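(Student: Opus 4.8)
The plan is to subtract the mild representations \eqref{mild_sol} and \eqref{mild_sol_n} and control the three resulting contributions separately: the initial-datum term $e^{tA_n}P_nx-e^{tA}x$, the drift term, and the stochastic convolution $W_{A,n}(t)-W_A(t)$, using throughout the elementary bound $\|a+b+c\|_H^2\le 3(\|a\|_H^2+\|b\|_H^2+\|c\|_H^2)$. The key structural facts I would exploit are \eqref{P5}, which says that the two semigroups agree on $H_n$: since $G_n=P_nG$ and $\widetilde{\mathscr L}_nB_n(X(s,x))$ take values in $H_n$, every factor $e^{(t-s)A_n}$ acting on these vectors may be replaced by $e^{(t-s)A}$. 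I would also fix $M_T:=\sup_{\tau\in[0,T]}\|e^{\tau A}\|_{\mathcal L(H)}<\infty$ (finite by strong continuity). The initial-datum term is then immediate and deterministic: $e^{tA_n}P_nx=e^{tA}P_nx$, so $\|e^{tA_n}P_nx-e^{tA}x\|_H\le M_T\|(I-P_n)x\|_H\to0$ uniformly in $t\in[0,T]$, which settles this contribution for both \eqref{supE} and \eqref{Esup} at once.

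For the drift I would first simplify the integrand algebraically. By \eqref{commutazione}, \eqref{Btilda}, \eqref{commutazione-n} and the definitions \eqref{coefficienti-n} one obtains $\widetilde{\mathscr L}B(X(s,x))=\widetilde G\mathcal K\widetilde B(X(s,x))$ and $\widetilde{\mathscr L}_nB_n(X(s,x))=P_n\widetilde G\mathcal K\widetilde B(P_nX(s,x))$, so that, after replacing $e^{(t-s)A_n}$ by $e^{(t-s)A}$ as above, the difference of the two drift integrands equals
\[
e^{(t-s)A}\Big[(P_n-I)\widetilde G\mathcal K\widetilde B(P_nX(s,x))+\widetilde G\mathcal K\big(\widetilde B(P_nX(s,x))-\widetilde B(X(s,x))\big)\Big].
\]
Since $\widetilde B$ is bounded, the $H$-norm of this quantity is dominated by the deterministic constant $2M_T\|\widetilde G\|_{\mathcal L(U;H)}\|\mathcal K\|_{\mathcal L(U)}\|\widetilde B\|_\infty$, and it tends to $0$ for a.e.\ $(s,\omega)$: indeed $P_nX(s,x)\to X(s,x)$ in $H$ because $P_n\to I$ strongly, whence $\widetilde B(P_nX(s,x))\to\widetilde B(X(s,x))$ by continuity of $\widetilde B$ and $(P_n-I)\widetilde G\mathcal K\widetilde B(P_nX(s,x))\to0$. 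Using Cauchy--Schwarz, $\big\|\int_0^t(\cdot)\,ds\big\|_H^2\le T\int_0^T\|(\cdot)\|_H^2\,ds$, a bound independent of $t$; taking expectations and applying dominated convergence on the finite measure space $[0,T]\times\Omega$ makes the drift contribution to \emph{both} $\sup_t\mathbb E[\cdots]$ and $\mathbb E[\sup_t\cdots]$ vanish as $n\to\infty$.

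The stochastic convolution is the core. Replacing $e^{(t-s)A_n}G_n$ by $e^{(t-s)A}P_nG$ gives $W_{A,n}(t)-W_A(t)=-\int_0^t e^{(t-s)A}(I-P_n)G\,dW(s)$. For \eqref{supE} I would apply the It\^o isometry and substitute $r=t-s$ to get $\mathbb E\|W_{A,n}(t)-W_A(t)\|_H^2=\int_0^t\|e^{rA}(I-P_n)G\|_{\mathcal L_2(U;H)}^2\,dr\le\int_0^T\|e^{rA}(I-P_n)G\|_{\mathcal L_2}^2\,dr$, a $t$-independent bound, and then let $n\to\infty$ by dominated convergence; the convergence of this integral to $0$ is the key point, addressed below. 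For \eqref{Esup} the isometry is insufficient because the supremum sits inside the expectation, so I would invoke the factorization method together with the Burkholder--Davis--Gundy inequality, reducing $\mathbb E\sup_{t}\|\cdots\|^p$ (for suitable $p>2$ and $\alpha\in(1/p,1/2)$) to $\mathbb E\int_0^T\|\widetilde Y_\alpha^n(s)\|_H^p\,ds$ with $\widetilde Y_\alpha^n(s)=\int_0^s(s-r)^{-\alpha}e^{(s-r)A}(I-P_n)G\,dW(r)$, and then bound $\mathbb E\|\widetilde Y_\alpha^n(s)\|_H^p\le C\big(\int_0^s(s-r)^{-2\alpha}\|e^{(s-r)A}(I-P_n)G\|_{\mathcal L_2}^2\,dr\big)^{p/2}$. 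Here the extra hypothesis $G\in\mathcal L_2(U;H)$ pays off: $\|e^{\tau A}(I-P_n)G\|_{\mathcal L_2}\le M_T\|(I-P_n)G\|_{\mathcal L_2}$ with $\|(I-P_n)G\|_{\mathcal L_2}\to0$, and since $2\alpha<1$ makes the weight integrable, the whole expression is controlled by a constant times $\|(I-P_n)G\|_{\mathcal L_2}^p\to0$; a final Jensen inequality passes from $p$ to $2$.

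The main obstacle is exactly the Hilbert--Schmidt convergence $\int_0^t\|e^{rA}(I-P_n)G\|_{\mathcal L_2}^2\,dr\to0$ in the general case \eqref{supE}, where $G$ need not be Hilbert--Schmidt. The difficulty is that $(I-P_n)$ sits \emph{between} $e^{rA}$ and $G$ and does not commute with the semigroup: Hypotheses \ref{hyp:finito-dimensionale} grant only the invariance $e^{rA}H_n\subseteq H_n$ (through \eqref{P5}), not $P_ne^{rA}=e^{rA}P_n$, so the naive termwise domination by $\|e^{rA}Gg_k\|_H^2$ is unavailable. The convergence has to be extracted from the joint Hilbert--Schmidt regularity of $e^{rA}G$ (finite for a.e.\ $r$ by Hypotheses \ref{hyp:finito-dimensionale}(v)) together with the strong convergence $P_n\to I$ and the invariance $e^{rA}H_n\subseteq H_n$, the weighted trace integrability in Hypotheses \ref{hyp:finito-dimensionale}(v) providing the dominating function. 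Once this term is controlled, combining the three estimates through the $3(\|a\|_H^2+\|b\|_H^2+\|c\|_H^2)$ bound yields \eqref{supE}, and \eqref{Esup} in the Hilbert--Schmidt case.
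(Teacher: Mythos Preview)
Your overall decomposition and the handling of the initial-datum and drift terms match the paper's approach closely; the drift is split slightly differently in the paper (it works directly with $\widetilde{\mathscr L}_nB_n-\widetilde{\mathscr L}B$ and invokes the H\"older continuity of $B$ rather than passing through $\widetilde G\mathcal K\widetilde B$), but the mechanism---a uniform bound plus dominated convergence on $[0,T]\times\Omega$---is identical.

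The genuine difference is in the stochastic convolution for \eqref{Esup}. You propose the factorization lemma combined with the Burkholder--Davis--Gundy inequality, first obtaining an $L^p$ bound for some $p>2$ and then descending to $p=2$ via Jensen. This is correct and is the standard route when one needs pathwise continuity of the stochastic convolution. The paper instead invokes the Kotelenez maximal inequality for stochastic convolutions (see \cite{Hau-Sei2001,Kot1984}), which gives directly
\[
\mathbb{E}\Big[\sup_{t\in[0,T]}\|W_A(t)-W_{A,n}(t)\|_H^2\Big]\le C\int_0^T\|(I-P_n)G\|_{\mathcal L_2(U;H)}^2\,ds=CT\,\|(I-P_n)G\|_{\mathcal L_2(U;H)}^2,
\]
after which the Hilbert--Schmidt assumption on $G$ and dominated convergence in $k$ finish the job in one line. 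Your factorization argument buys more (it would also yield higher moments and pathwise continuity), but here it is heavier than necessary; the paper's route is shorter because the Kotelenez inequality absorbs the semigroup factor entirely.

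As for \eqref{supE}, the paper does not prove it here at all: it simply cites \cite[Proposition~4.2]{AddBig}. The obstacle you isolate---that $(I-P_n)$ sits between $e^{rA}$ and $G$ and one cannot naively dominate $\|e^{rA}(I-P_n)G\|_{\mathcal L_2}^2$ by $\|e^{rA}G\|_{\mathcal L_2}^2$ without commutation of $P_n$ and $e^{rA}$---is exactly the point where additional structure from Hypotheses~\ref{hyp:finito-dimensionale}\eqref{Accan} (the layer-by-layer invariance $A(H_n\setminus H_{n-1})\subseteq (H_n\setminus H_{n-1})\cup\{0\}$) has to be exploited; your sketch of the ingredients is accurate but not yet a proof, and the paper's deferral to \cite{AddBig} means the resolution lives there.
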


\begin{proof}
The first part of the statement has been already proved in \cite[Proposition 4.2]{AddBig}. Here, we show that \eqref{Esup} holds true. Fix $T>0$ and $x\in H$. We begin by proving that 
\begin{align}\label{convsupWA}
\lim_{n\rightarrow +\infty}\mathbb{E}\left[\sup_{t\in [0,T]}\|W_A(t)-W_{A_n}(t)\|_H^2\right]=0.
\end{align}
From the definition of $W_{A,n}$ and \eqref{P5}, for every $t\geq0$ we can write
\begin{align*}
W_A(t)-W_{A,n}(t)=\int_0^t e^{(t-s)A}\left(G-P_nG\right)dW(s), \qquad \mathbb P{\rm-a.s.}
\end{align*}
By the Kotelentz inequality for submartingales (see \cite{Hau-Sei2001,Kot1984}), there exists $C>0$ such that for every $n\in\N$ we get
\begin{align*}
\mathbb{E}\left[\sup_{t\in [0,T]}\|W_A(t)-W_{A_n}(t)\|^2_H\right]
&\leq C\int_0^T \left\|\left(I-P_n\right)G\right\|_{\mathcal{L}_2(U;H)}^2ds.
\end{align*}
Hence,
\begin{align}
\lim_{n\rightarrow \infty}\mathbb{E}\left[\sup_{t\in [0,T]}\|W_A(t)-W_{A_n}(t)\|^2_H\right]
&\leq C\lim_{n\rightarrow \infty}\int_0^T \norm{(\Id-P_n)Gg_k}_H^2ds\notag\\
&= CT\!\lim_{n\rightarrow \infty}\sum_{k=1}^\infty\norm{(\Id-P_n)Gg_k}_H^2,\label{lim_conv_stoc}
\end{align}
where $\{g_k\}_{n\in\N}$ is an orthonormal basis of $U$. Since $G\in \mathcal{L}_2(U;H)$, for every $k\in\N$ we get
\begin{equation*}
\norm{(\Id-P_n)Gg_k}_H^2\leq \norm{Gg_k}_H^2, \qquad \lim_{n\rightarrow\infty}\norm{(\Id-P_n)Gg_k}_H^2=0, \qquad \sum_{k=1}^{\infty}\norm{Gg_k}_H^2=\norm{G}_{\mathcal{L}_2(U;H)}^2.
\end{equation*}
By the dominated convergence theorem, we infer that
\begin{align}
\label{conv_somm_G}
\lim_{n\to\infty}  \left\|\left(I-P_n\right)G\right\|_{\mathcal{L}_2(U;H)}^2=\lim_{n\to\infty}\sum_{k=1}^{\infty}\norm{(\Id-P_n)Gg_k}_H^2=0.  
\end{align}
Combining \eqref{lim_conv_stoc} and \eqref{conv_somm_G}, we obtain \eqref{convsupWA}.

From the definition of $B_n$, Hypotheses \ref{hyp:finito-dimensionale} and \eqref{P5}, for every $n\in\N$ we have
\begin{align*}
&\norm{\int_0^te^{(t-s)A_n}\widetilde{\mathscr L}_nB_n(X(s,x))ds-\int_0^te^{(t-s)A}\widetilde{\mathscr L}B(X(s,x))ds}^2_H\\
&\leq\sup_{t\in[0,T]}\norm{e^{tA}}^2_{\mathcal{L}(H)}\int_0^T\norm{\widetilde{\mathscr L}_nB_n(X(s,x))-\widetilde{\mathscr L}B(X(s,x))}^2_Hds,\quad \mathbb{P}-{\rm a.s.}
\end{align*}
Since $\norm{\widetilde{\mathscr L}_nB_n}_\infty\leq \|\widetilde{\mathscr L}\|_{\mathcal L(H)}\norm{B}_\infty<\infty$ and noticing that, for every $n\in\N$ and $s\in[0,T]$,
\begin{align*}
& \norm{\widetilde{\mathscr L}_nB_n(X(s,x))-\widetilde{\mathscr L}B(X(s,x))}_H \\
& \leq \norm{\widetilde{\mathscr L}_nB_n(X(s,x))-\widetilde{\mathscr L}_nB(X(s,x))}_H+\norm{\widetilde{\mathscr L}_nB(X(s,x))-\widetilde{\mathscr L}B(X(s,x))}_H \\
& \leq \|\widetilde{\mathscr L}\|_{\mathcal L(H)}\norm{B_n(X(s,x))-B(X(s,x))}_H+\norm{\widetilde{\mathscr L}_nB(X(s,x))-\widetilde{\mathscr L}B(X(s,x))}_H \\
& \leq \|\widetilde{\mathscr L}\|_{\mathcal L(H)} \big(\|P_nB(P_n X(s,x))-P_nB(X(s,x))\|_H +\|P_nB(X(s,x))-B(X(s,x))\|_H \big)\\
& + \norm{\widetilde{\mathscr L}_nB(X(s,x))-\widetilde{\mathscr L}B(X(s,x))}_H \\
& \leq\|\widetilde{\mathscr L}\|_{H} [B]_{C_b^\theta(H;H)}\|P_nX(s,x)-X(s,x)\|^\theta+\|P_nB(X(s,x))-B(X(s,x))\|_H \\
& +\norm{\widetilde{\mathscr L}_nB(X(s,x))-\widetilde{\mathscr L}B(X(s,x))}_H,
\end{align*}
by applying the dominated convergence theorem we infer
\begin{align}
\notag 
&\lim_{n\rightarrow \infty}\mathbb{E}\left[\sup_{t\in [0,T]}\norm{\int_0^te^{(t-s)A_n}\widetilde{\mathscr L}_nB_n(X(s,x))ds-\int_0^te^{(t-s)A}\widetilde{\mathscr L}B(X(s,x))ds}^2_H\right] \\
& \leq \sup_{t\in[0,T]}\norm{e^{tA}}^2_{\mathcal{L}(H)}
\lim_{n\to\infty} \mathbb E \left[\int_0^T\norm{\widetilde{\mathscr L}_nB_n(X(s,x))-\widetilde{\mathscr L}B(X(s,x))}^2_Hds\right]=0.
\label{convsupint}
\end{align}
Finally, by \eqref{P5} we have 
\begin{equation}
\label{conveta}
\lim_{n\to\infty}\sup_{t\in[0,T]}\norm{e^{tA_n} P_nx-e^{tA}x}_H\leq \left(\sup_{t\in [0,T]}\norm{e^{tA}}_{\mathcal{L}(H)}\right)\lim_{n\to\infty}\norm{P_nx-x}_H.
\end{equation}
Hence, \eqref{convsupWA}, \eqref{convsupint} and \eqref{conveta} yield the statement.
\end{proof}

Let $n\in\N$. We consider the following integral equality 
\begin{equation}\label{Back-Kolmo-intro}
    U_n(t,x)=\int^T_t \mathcal{R}_n(r-t)\left( DU_n(r,\cdot)\widetilde{\mathscr L}_nB_n(\cdot)+B_n(\cdot)\right)(x)dr ,\quad t\in [0,T], \ x\in H_n,
\end{equation}
where $\{\mathcal{R}_n(t)\}_{t\geq 0}$ is the vector-valued Ornstein-Uhlenbeck semigroup associated to \eqref{mild_sol_n_var} with $B_n\equiv 0$. By \eqref{Btilda} (o ci vuole \eqref{commutazione}?) and \eqref{commutazione-n} we get 
\begin{align*}
U_n(t,x)&=\int^T_t \mathcal{R}_n(r-t)\left( DU_n(r,\cdot)\widetilde{\mathscr L}_nB_n(\cdot)+B_n(\cdot)\right)(x)dr \\
&=\int^T_t \mathcal{R}_n(r-t)\left( DU_n(r,\cdot)\widetilde{G}_n\mathcal{K}\widetilde{B}_n(\cdot)+B_n(\cdot)\right)(x)dr,
\end{align*}
hence for every $n\in\N$ by \cite[Proposition 4.5]{AddBig} (with $\widetilde{B}_n$ replaced by $\mathcal{K}\widetilde{B}_n$ and $F_n=B_n$) equation \eqref{Back-Kolmo-intro} admits a unique solution $U_n\in C_b^{0,1}([0,T]\times H_n;H_n)$ such that the map $x\rightarrow DU_n(t,x)\widetilde{G}_n$ belongs to $C^1_b(H_n;\mathcal{L}(U;H_n))$ for every $t\in [0,T]$. Moreover, for every $n\in\N$ and $t\in [0,T]$ we have
\begin{align}\label{stima-n}
\sup_{t\in[0,T]}\left(\norm{U_n(t,\cdot)}_{C^1_b(H_n;H_n)}+\|DU_n(t,\cdot)\widetilde{G}_n\|_{C^1_b(H_n;\mathcal{L}(U;H_n))}\right)&\leq M_{T} \|B_n\|_{C^\theta_b(H_n;H_n)},
 \end{align}
where $M_T$ is a positive constant such that
\[
\lim_{T\rightarrow0}M_T=0.
\]
For every $v\in H_n$ the function $U^v_{n}=\scal{U_n}{v}_H$ is the unique solution in $C^{0,1}_b([0,T]\times H_n)$ to the integral equation
\begin{equation}\label{Back-Kolmonk}
  U^v_{n}(t,x)=\int^T_t R_n(r-t)\left( \scal{\widetilde{G}_n^*\nabla U^v_{n}(r,\cdot)}{\mathcal{K}\widetilde{B}_n(\cdot)}_H+\scal{B_{n}}{v}_H\right)(x)dr ,\quad t\in [0,T], \ x\in H_n,
\end{equation}
and 
\begin{align}\label{stima-n-scalare}
\sup_{t\in[0,T]}\left(\norm{U^v_n(t,\cdot)}_{C^1_b(H_n)}+\|\widetilde{G}_n^*\nabla U_n(t,\cdot)\|_{C^1_b(H_n;U)}\right)\leq M_T\|\langle F_n,v\rangle_H\|_{C^\theta_b(H_n)},\quad
t\in[0,T].
\end{align}
Let $\{g_k:k\in\N\}$ be an orthonormal basis of $H$ such that 
\[
H_n={\rm span}\{g_1,...,g_{s_n}\}\quad \forall n\in\N,
\]
where $s_n:={\rm Dim}(H_n)$. The functions $U_{n,k}=\scal{U_n}{g_k}_H$, with $k=1,..., s_n$ verify
\begin{equation}\label{serieUk}
U_n=\sum^{s_n}_{k=1}U_{n,k}g_k,
\end{equation}
and, if $v,w\in H_n$ then
\begin{equation}\label{scambio}
\scal{DU_n(x)\widetilde{G}_nv}{w}=\scal{\widetilde{G}_n^*\nabla U^w_n(x)}{v},\quad x\in H_n,\; n\in\N.
\end{equation}
For more details about these Kolmogorov equations we refer to \cite[Appendix A]{AddBig} (see also \cite[Remark 4.6]{AddBig})

\begin{prop}
Assume that Hypotheses \ref{hyp:finito-dimensionale} hold true. For every $T>0$, $n\in\N$ and $x\in H$ the stochastic process $\{X_n(t,x)\}_{t\in [0,T]}$ defined in \eqref{mild_sol_n} satisfies the following equality: for every $t\in[0,T]$, 
\begin{align}
X_n(t,x)
& = e^{tA_n}P_nx-\widetilde{\mathscr L}_nU_n(t,X_n(t,x))+\widetilde{\mathscr L}_ne^{tA_n}U_n(0,P_nx) \notag \\
& -A_n\int_0^te^{(t-s)A_n}\widetilde{\mathscr L}_nU_n(s,X_n(s,x))ds \notag \\
& +\int_0^t e^{(t-s)A_n}\widetilde{\mathscr L}_n(B_n(X(s,x))-B_n(s,X_n(s,x)))ds \notag \\
& +\int_0^t e^{(t-s)A_n}\widetilde{\mathscr L}_n D_xU_n(s,X_n(s,x))(\widetilde{\mathscr L}_nB_n(X(s,x))-\widetilde{\mathscr L}_nB_n(X_n(s,x)))ds \notag \\
& +\int_0^t e^{(t-s)A_n}G_ndW(s)+ \int_0^t e^{(t-s)A_n}\widetilde{\mathscr L}_nD_xU_n(s,X_n(s,x))(G_ndW(s)), \qquad \mathbb P\textup{-a.s.}
\label{mild_sol_n_pert_ibp}
\end{align}
\end{prop}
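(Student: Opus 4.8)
The plan is to carry out the It\^o--Tanaka trick directly in the finite-dimensional space $H_n$, where $A_n\in\mathcal L(H_n)$ and every object at hand is smooth enough for classical stochastic calculus. First I would record that, since $A_n$ is bounded, the mild identity \eqref{Back-Kolmo-intro} is equivalent to the backward Kolmogorov equation
\begin{align*}
&\partial_tU_n(t,x)+\tfrac12\mathrm{Tr}_{H_n}\big[G_nG_n^*D^2U_n(t,x)\big]+DU_n(t,x)A_nx\\
&\qquad+DU_n(t,x)\widetilde{\mathscr L}_nB_n(x)+B_n(x)=0,
\end{align*}
with terminal condition $U_n(T,\cdot)=0$; this follows by differentiating \eqref{Back-Kolmo-intro} with respect to $t$ and recalling that $\phi\mapsto\tfrac12\mathrm{Tr}_{H_n}[G_nG_n^*D^2\phi]+D\phi\,A_n(\cdot)$ is the generator of the Ornstein--Uhlenbeck semigroup $\{\mathcal R_n(t)\}_{t\ge0}$. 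The target identity will then be produced by applying It\^o's formula to the auxiliary process $\Psi(s):=e^{(t-s)A_n}\widetilde{\mathscr L}_nU_n(s,X_n(s))$, $s\in[0,t]$, and substituting the outcome into the mild formula \eqref{mild_sol_n}.

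Concretely, I would first apply It\^o's formula to $s\mapsto U_n(s,X_n(s))$ along \eqref{mild_sol_n_var}. The drift contributions of $\partial_sU_n$, of $DU_n\,A_nX_n(s)$ and of the second-order term $\tfrac12\mathrm{Tr}_{H_n}[G_nG_n^*D^2U_n]$ add up, by the Kolmogorov equation above, to $-DU_n(s,X_n(s))\widetilde{\mathscr L}_nB_n(X_n(s))-B_n(X_n(s))$, whereas the drift $\widetilde{\mathscr L}_nB_n(X(s))$ of \eqref{mild_sol_n_var} produces $DU_n(s,X_n(s))\widetilde{\mathscr L}_nB_n(X(s))$; hence
\begin{align*}
dU_n(s,X_n(s))&=-B_n(X_n(s))\,ds+DU_n(s,X_n(s))\widetilde{\mathscr L}_n\big(B_n(X(s))-B_n(X_n(s))\big)\,ds\\
&\quad+DU_n(s,X_n(s))G_n\,dW(s).
\end{align*}
Since $A_n$ is bounded we have $\tfrac{d}{ds}e^{(t-s)A_n}=-A_ne^{(t-s)A_n}$, so the product rule gives $d\Psi(s)=-A_ne^{(t-s)A_n}\widetilde{\mathscr L}_nU_n(s,X_n(s))\,ds+e^{(t-s)A_n}\widetilde{\mathscr L}_n\,dU_n(s,X_n(s))$. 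Integrating over $[0,t]$, using $\Psi(t)=\widetilde{\mathscr L}_nU_n(t,X_n(t))$ and $\Psi(0)=e^{tA_n}\widetilde{\mathscr L}_nU_n(0,P_nx)$, and solving for $\int_0^te^{(t-s)A_n}\widetilde{\mathscr L}_nB_n(X_n(s))\,ds$, I obtain this drift as a sum of the two boundary terms, the $A_n$-integral, the $\widetilde{\mathscr L}_nDU_n\widetilde{\mathscr L}_n(B_n(X)-B_n(X_n))$ integral and the stochastic integral $\int_0^te^{(t-s)A_n}\widetilde{\mathscr L}_nDU_n(s,X_n(s))G_n\,dW(s)$. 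Writing $B_n(X(s))=B_n(X_n(s))+\big(B_n(X(s))-B_n(X_n(s))\big)$ in \eqref{mild_sol_n} and replacing $\int_0^te^{(t-s)A_n}\widetilde{\mathscr L}_nB_n(X_n(s))\,ds$ by the expression just found, together with the commutation $e^{tA_n}\widetilde{\mathscr L}_n=\widetilde{\mathscr L}_ne^{tA_n}$ (a consequence of the block structure of the spaces $H_n$ in Hypotheses \ref{hyp:finito-dimensionale}\eqref{Accan}), yields exactly \eqref{mild_sol_n_pert_ibp}.

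The step I expect to be delicate is the justification of It\^o's formula for $U_n(s,X_n(s))$, since \cite[Proposition 4.5]{AddBig} only provides $U_n\in C_b^{0,1}([0,T]\times H_n;H_n)$ and not full second-order regularity in space. The saving observation is that the second-order It\^o correction only couples the Hessian with $G_nG_n^*$, and by \eqref{coefficienti-n} one has $G_nG_n^*=\widetilde{G}_n\mathcal V\mathcal V^*\widetilde{G}_n^*$, so this correction factors through $\widetilde{G}_n$; as the map $x\mapsto DU_n(t,x)\widetilde{G}_n$ belongs to $C^1_b(H_n;\mathcal L(U;H_n))$, the relevant quantity $\widetilde{G}_n^*D^2U_n(s,X_n(s))\widetilde{G}_n$ is well defined and the trace term is meaningful. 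I would make this rigorous either by a mollification argument, approximating $U_n$ by smooth maps, applying the classical It\^o formula and passing to the limit by means of the uniform bounds \eqref{stima-n}, or by invoking a version of It\^o's formula valid precisely under this partial regularity. The remaining manipulations are routine, the only points requiring care being the boundedness of $A_n$ on $H_n$, which legitimizes both differentiating the semigroup in $\Psi$ and pulling $A_n$ out of the time integral.
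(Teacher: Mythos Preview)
Your approach is essentially the paper's It\^o--Tanaka argument, with two cosmetic differences: the paper works component-wise, applying It\^o's formula to the scalar functions $U_{n,k}=\langle U_n,g_k\rangle_H$ and then summing via \eqref{serieUk}--\eqref{scambio} to obtain the differential identity \eqref{Bn}, whereas you apply it directly to the vector-valued $U_n$; and the paper passes to mild form first and then integrates $\int_0^te^{(t-s)A_n}d\widetilde{\mathscr L}_nU_n(s,X_n(s))$ by parts, whereas you apply the product rule to $\Psi(s)=e^{(t-s)A_n}\widetilde{\mathscr L}_nU_n(s,X_n(s))$ directly. These are equivalent bookkeeping choices; the component-wise route has the small advantage that the regularity needed for It\^o's formula is exactly that of the scalar solutions $U_{n,k}$, so the approximation argument you sketch is already packaged in \cite[Section~4.2]{AddBig}.

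One point to correct: the commutation $e^{tA_n}\widetilde{\mathscr L}_n=\widetilde{\mathscr L}_ne^{tA_n}$ does \emph{not} follow from Hypotheses~\ref{hyp:finito-dimensionale}\eqref{Accan}, which only gives $P_n\widetilde{\mathscr L}=\widetilde{\mathscr L}P_n$; in the damped-wave example of Subsection~\ref{sub:stoch_damp_eq} one checks $A\widetilde{\mathscr L}\neq\widetilde{\mathscr L}A$. Your own computation of $\Psi(0)$ already produces the boundary term in the order $e^{tA_n}\widetilde{\mathscr L}_nU_n(0,P_nx)$, and this is also what the paper's integration by parts yields; the order written in the statement appears to be a typo and is immaterial for the subsequent estimates (cf.\ the bound on $I_1$ in \eqref{SI1}). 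So simply drop the commutation step and keep the boundary term as $e^{tA_n}\widetilde{\mathscr L}_nU_n(0,P_nx)$.
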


\begin{proof}
Let $\{g_k:k\in\N\}$ be an orthonormal basis of $H$ such that 
\[
H_n={\rm span}\{g_1,...,g_{s_n}\}\quad \forall n\in\N,
\]
where $s_n:={\rm Dim}(H_n)$. Let $n\in\N$, $k=1,\ldots,s_n$ and let $U_{n,k}$ be the solution to \eqref{Back-Kolmonk} (with $v=g_k$), using the It\^o formula and the same approximation argument of \cite[Section 4.2]{AddBig}, we obtain
\begin{align*}
dU_{n,k}(t,X_n(t,x))&= \scal{\nabla_x U_{n,k}(t,X_n(t,x))}{\widetilde{\mathscr L}_nB_n(X(t,x))-\widetilde{\mathscr L}_nB_n(X_n(t,x))}dt\\
&-\scal{B_n(X_n(t,x))}{g_k}dt+  \scal{\nabla_x U_{n,k}(t,X_n(t,x)) }{G_ndW(t)}.
\end{align*}
Hence
\begin{align}
\scal{B_n(X_n(t,x))}{g_k}dt&=-dU_{n,k}(t,X_n(t,x))dt\notag\\
&+\scal{\nabla_xU_{n,k}(t,X_n(t,x))}{\widetilde{\mathscr L}_nB_n(X(t,x))-\widetilde{\mathscr L}_nB_n(X_n(t,x))}dt\notag\\
&+\scal{\nabla_x U_{n,k}(t,X_n(t,x))}{G_ndW(t)}.\label{Bnk}
\end{align}
Summing up $k$ from $1$ to $s_n$ in \eqref{Bnk}, by \eqref{serieUk} and \eqref{scambio} we obtain
\begin{align}\label{Bn}
B_n(X_n(t,x))dt&=-dU_{n}(t,X_n(t,x))\notag\\
&+D_xU_{n}(t,X_n(t,x))\left(\widetilde{\mathscr L}_nB_n(X(t,x))-\widetilde{\mathscr L}_nB_n(X_n(t,x))\right)dt\notag\\
&+D_x U_n(t,X_n(t,x))G_ndW(t).
\end{align}
By applying $\widetilde{\mathscr L}_n$ to both the sides of \eqref{Bn}, we infer that
\begin{align}\label{espr_Lambda_n_B_n}
\widetilde{\mathscr L}_nB_n(X_n(t,x))dt&=-d\widetilde{\mathscr L}_nU_{n}(t,X_n(t,x))\notag\\
&+\widetilde{\mathscr L}_nD_xU_{n}(t,X_n(t,x))\left(\widetilde{\mathscr L}_nB_n(X(t,x))-\widetilde{\mathscr L}_nB_n(X_n(t,x))\right)dt\notag\\
&+\widetilde{\mathscr L}_nD_x U_n(t,X_n(t,x))G_ndW(t) \qquad t\in[0,T].
\end{align}
Adding and subtracting $\widetilde{\mathscr L}_nB_n(t,X_n(t,x))dt$ in \eqref{mild_sol_n_var}, from \eqref{espr_Lambda_n_B_n} it follows that, for every $t\in[0,T]$,
\begin{align}
dX_n(t,x)
= & A_nX_n(t,x)dt+\widetilde{\mathscr L}_n(B_n(X(t,x))-B_n(t,X_n(t,x)))dt \notag \\
& +\widetilde{\mathscr L}_n D_xU_n(t,X_n(t,x))(\widetilde{\mathscr L}_nB_n(X(t,x))-\widetilde{\mathscr L}_nB_n(X_n(t,x)))dt \notag \\
& - d\widetilde{\mathscr L}_n U_n(t,X_n(t,x))
+\widetilde{\mathscr L}_nD_xU_n(t,X_n(t,x))(G_ndW(t))+G_ndW(t). 
\label{mild_sol_n_var_pert}
\end{align}
This implies that, for every $t\in[0,T]$,
\begin{align}
X_n(t,x)
& = e^{tA_n}P_nx+\int_0^t e^{(t-s)A_n}\widetilde{\mathscr L}_n(B_n(X(s,x))-B_n(s,X_n(s,x)))ds \notag \\
& +\int_0^t e^{(t-s)A_n}\widetilde{\mathscr L}_n D_xU_n(s,X_n(s,x))(\widetilde{\mathscr L}_nB_n(X(s,x))-\widetilde{\mathscr L}_nB_n(X_n(s,x)))ds \notag \\
& -\int_0^t e^{(t-s)A_n}d\widetilde{\mathscr L}_n U_n(s,X_n(s,x)) +\int_0^t e^{(t-s)A_n}G_ndW(s) \notag \\
& +\int_0^t e^{(t-s)A_n}\widetilde{\mathscr L}_nD_xU_n(s,X_n(s,x))(G_ndW(s)), \qquad \mathbb P\textup{-a.s.}
\label{mild_sol_n_pert}
\end{align}
Recalling that $ X_n(0,x)=P_nx$ and integrating by parts the third integral in the right-hand side of \eqref{mild_sol_n_pert}, we obtain \eqref{mild_sol_n_pert_ibp}.
\end{proof}

\subsection{Proof of the main results}
\label{proof}
The results of this subsection will be proved under the following additional conditions. If necessary, in the following hypotheses we consider (without changing the notation) the complexification of $H$ and we take our assumptions on the complexified space.

\begin{hyp1}\label{hyp:goal-addo}
\begin{enumerate}[\rm(i)]
\item
There exist $\beta\in [0,1)$ and, for every $T>0$, a positive constant $K_T>0$ such that 
\begin{align*}
\sup_{n\in\N} \| A_n e^{tA_n}\widetilde{\mathscr L}_n\|_{\mathcal{L}(H_n)}\leq K_T t^{-\beta}, \qquad t\in(0,T].   
\end{align*}
\item there exists a family of normalized (but not necessarily orthogonal) vectors $\{f_n:n\in\N\}$ of $H$ consisting of eigenvectors of $A^*$ such that $H=\overline{{\rm span}\{f_n:n\in\N\}}$ and there exists a sequence $(d_n)_{n\in\N}\subseteq \N$ such that 
\begin{enumerate}[\rm(a)]
\item for every $n\in\N$ we have
\begin{align*}    
\{f_1,\ldots,f_{s_n}\}=\bigcup_{i=1}^n \{e_1^i,\ldots,e_{d_i}^i\},\quad s_n=d_1+\ldots + d_n,
\end{align*}
where $\{e^i_n\,:\, i,n\in\N\}$ is a family of vectors such that for every $i,j\in\N$ with $i\neq j$, we have
\[
\langle e^i_k,e^j_h\rangle_H=0,\qquad k=1,\ldots,d_i, \ h=1,\ldots,d_j.
\]
\item There exists $d\in\N$ such that $d_n\leq d$ for every $n\in\N$.

\item The finite dimensional vector space $K_n:=H_{n}\setminus H_{n-1}\cup\{0\}$ is invariant for $\widetilde{\mathscr L}$ (and so also for $\widetilde{\mathscr L}^*$) segue dalla commutativit\`a tra $P_n$ e $\widetilde{\mathscr L}$? and we set $\zeta_n:=\|\widetilde{\mathscr L}\|_{\mathcal L(K_n)}=\|\widetilde{\mathscr L}^*\|_{\mathcal L(K_n)}$.  

\item For every $n\in\N$ and $j=1,\ldots,d_n$, the eigenvalue $\rho^n_j$ of $A^*$ associated to the eigenvector $e^n_j$ has negative real part. Moreover,
\begin{align}
\label{conv_serie_holder}
-\sum_{n\in\N}\zeta_n^2\sum_{j=1}^{d_n}\frac{\|B^n_j\|_{C_b^\theta(H)}^2}{{\rm Re}(\rho^n_j)}<\infty,   
\end{align}
where $B^n_j(\cdot)=\langle B(\cdot),e^n_j\rangle_H$.
\end{enumerate}
\end{enumerate}
\end{hyp1}

\begin{rmk}
\label{rmk:cont_dec_lambda_n}
\begin{enumerate}[\rm (i)]
\item If for every $n\in\N$ and every $i\in \{1,\ldots,d_n\}$, we set $\zeta_{n,i,j}:=\langle \widetilde{\mathscr L}^* e_i^n,e_j^n\rangle_H$, it follows that $|\zeta_{n,i,j}|\leq \zeta_n$ for every $n\in\N$ and every $i,j\in\{1,\ldots,d_n\}$. 
\item If $\{e^n_j: n,j\in\N\}$ are also eigenvectors of $\widetilde{\mathscr L}$ then we can replace condition \eqref{conv_serie_holder} with
\begin{align}
\label{conv_serie_holder_var}
-\sum_{n\in\N}\sum_{j=1}^{d_n}|\zeta_j^n|^2\frac{\|B^n_j\|_{C_b^\theta(H)}^2}{{\rm Re}(\rho^n_j)}<\infty,   
\end{align}
where $\zeta_j^n$ is the eigenvalue of $\widetilde{\mathscr L}$ associated to the eigenvector $e^n_j$.
\end{enumerate}
\end{rmk}

\begin{thm}\label{pathwiseuniqueness}
Assume that Hypotheses \ref{hyp:finito-dimensionale} and \ref{hyp:goal-addo} hold true. Then, for every $T>0$ and $x\in H$ pathwise uniqueness holds true for equation \eqref{SDE}. Moreover, for every $T>0$ there exists $C_T>0$ such that for every $x_1,x_2\in H$, if $X(\cdot,x_1)$, $X(\cdot,x_2)$ are two weak mild solutions defined on the same filtered probability space with respect to the same cylindrical Wiener process $W$, then
\begin{equation}\label{lip-scarsa}
    \sup_{t\in [0,T]}\mathbb{E}\left[\|X(t,x_1)-X(t,x_2)\|^2_H\right]\leq C_T\norm{x_1-x_2}^2_H.
\end{equation}
\end{thm}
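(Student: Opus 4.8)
The plan is to prove \eqref{lip-scarsa} first for the Galerkin approximations $X_n(\cdot,x_1),X_n(\cdot,x_2)$ introduced in \eqref{mild_sol_n}, uniformly in $n\in\N$, and then to pass to the limit via \eqref{supE}. Set $Z_n(t):=X_n(t,x_1)-X_n(t,x_2)$ and $Y(t):=X(t,x_1)-X(t,x_2)$. By the triangle inequality and \eqref{supE}, one has $\mathbb{E}[\norm{Y(t)}_H^2]=\lim_{n\to\infty}\mathbb{E}[\norm{Z_n(t)}_H^2]$ for every $t\in[0,T]$, so it suffices to produce a bound $\mathbb{E}[\norm{Z_n(t)}_H^2]\le C_T\norm{x_1-x_2}_H^2+\varepsilon_n$ with $C_T$ independent of $n$ and $\varepsilon_n\to 0$; the pathwise uniqueness statement then follows by taking $x_1=x_2$, which forces any two weak mild solutions driven by the same $W$ to coincide.

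I would subtract the transformed identity \eqref{mild_sol_n_pert_ibp} written for $x_1$ and for $x_2$. The pure stochastic convolution $\int_0^t e^{(t-s)A_n}G_n\,dW(s)$ does not depend on the datum and cancels. The three ``$U_n$ terms'' (the substitution term $\widetilde{\mathscr L}_nU_n(t,\cdot)$, the initial term $\widetilde{\mathscr L}_ne^{tA_n}U_n(0,\cdot)$, and the $A_n$-integral term) are handled through the Lipschitz continuity of $U_n(t,\cdot)$ and of $x\mapsto D_xU_n(t,x)\widetilde{G}_n$, whose constants are bounded by $M_T\norm{B_n}_{C_b^\theta(H_n;H_n)}$ by \eqref{stima-n}, with $M_T\to 0$ as $T\to 0$: the substitution term yields a factor $C M_T^2\,\mathbb{E}[\norm{Z_n(t)}_H^2]$ to be absorbed on the left for $T$ small, the initial term is bounded by $C\norm{x_1-x_2}_H^2$, and the $A_n$-term produces, via Hypotheses \ref{hyp:goal-addo}(i), a singular kernel $\int_0^t(t-s)^{-\beta}\mathbb{E}[\norm{Z_n(s)}_H^2]\,ds$. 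The two drift-difference integrals carry the increments $B_n(X(s,\cdot))-B_n(X_n(s,\cdot))$; bounding $B$ by its $\theta$-H\"older seminorm and using \eqref{supE} with Jensen's inequality, they contribute only error terms $\varepsilon_n\to 0$ (after splitting the mixed $D_xU_n$ contribution by Young's inequality). Here the commutation \eqref{commutazione-n} is essential: it rewrites $\widetilde{\mathscr L}_nB_n=\widetilde{G}_n\mathcal{K}\widetilde{B}_n$, so that every drift increment lies in the range of $\widetilde{G}_n$ and the $C_b^1$ bound of \eqref{stima-n} on $x\mapsto D_xU_n(t,x)\widetilde{G}_n$ applies.

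The core of the argument is the surviving stochastic term $\int_0^t e^{(t-s)A_n}\widetilde{\mathscr L}_n\Theta_n(s)G_n\,dW(s)$, with $\Theta_n(s):=D_xU_n(s,X_n(s,x_1))-D_xU_n(s,X_n(s,x_2))$, which depends on the datum and must be controlled uniformly in $n$ even though $G$ is not Hilbert--Schmidt. By the It\^o isometry and $G_n=\widetilde{G}_n\mathcal{V}$, this reduces to bounding $\int_0^t\mathbb{E}\big[\norm{e^{(t-s)A_n}\widetilde{\mathscr L}_n\Theta_n(s)\widetilde{G}_n\mathcal{V}}_{\mathcal{L}_2(U;H)}^2\big]\,ds$, where $\norm{\Theta_n(s)\widetilde{G}_n}_{\mathcal{L}(U;H)}\le M_T\norm{B_n}_{C_b^\theta(H_n;H_n)}\norm{Z_n(s)}_H$ by \eqref{stima-n}. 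I would expand the Hilbert--Schmidt norm over the eigenbasis $\{e^n_j\}$ of $A^*$ furnished by Hypotheses \ref{hyp:goal-addo}(ii): since $A_n^*e^n_j=\rho^n_j e^n_j$ one gains the decay $e^{(t-s)\,\mathrm{Re}(\rho^n_j)}$, while the blocks $K_n$ have dimension at most $d$ and $\norm{\widetilde{\mathscr L}^*}_{\mathcal{L}(K_n)}=\zeta_n$, so the off-diagonal contributions are controlled by $\zeta_n$ through Remark \ref{rmk:cont_dec_lambda_n}. Estimating the scalar components by \eqref{stima-n-scalare} in terms of $\norm{B^n_j}_{C_b^\theta(H)}$ and integrating via $\int_0^t e^{2\tau\,\mathrm{Re}(\rho^n_j)}\,d\tau\le -1/(2\,\mathrm{Re}(\rho^n_j))$, one arrives at an integral term $\int_0^t K(t-s)\,\mathbb{E}[\norm{Z_n(s)}_H^2]\,ds$ whose kernel satisfies $\norm{K}_{L^1(0,T)}\le C M_T^2\big(-\sum_n\zeta_n^2\sum_{j}\norm{B^n_j}_{C_b^\theta(H)}^2/\mathrm{Re}(\rho^n_j)\big)<\infty$ by \eqref{conv_serie_holder}. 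This is precisely the step that dispenses with the Hilbert--Schmidt hypothesis on $G$ and the structure condition $\widetilde{\mathscr L}=G$, and it is the main obstacle.

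Gathering all the contributions, for $T$ small enough that the coefficient of the absorbed substitution term is strictly less than $1$, I obtain an inequality of the form $\mathbb{E}[\norm{Z_n(t)}_H^2]\le C\norm{x_1-x_2}_H^2+\int_0^t\big(C_T(t-s)^{-\beta}+K(t-s)\big)\mathbb{E}[\norm{Z_n(s)}_H^2]\,ds+\varepsilon_n$, where $\beta\in[0,1)$, the kernel belongs to $L^1(0,T)$, and $\varepsilon_n\to 0$. The singular (generalized) Gronwall inequality then yields $\mathbb{E}[\norm{Z_n(t)}_H^2]\le C_T(\norm{x_1-x_2}_H^2+\varepsilon_n)$, uniformly in $n$, and an arbitrary horizon is covered by iterating on short subintervals, which removes the smallness restriction on $T$. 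Letting $n\to\infty$ makes $\varepsilon_n$ vanish and, by \eqref{supE}, $\mathbb{E}[\norm{Z_n(t)}_H^2]\to\mathbb{E}[\norm{Y(t)}_H^2]$, giving \eqref{lip-scarsa}; choosing $x_1=x_2$ completes the proof of pathwise uniqueness.
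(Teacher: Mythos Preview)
Your proposal is correct and follows essentially the same route as the paper: Galerkin approximation via \eqref{mild_sol_n}, subtraction of the transformed identity \eqref{mild_sol_n_pert_ibp}, the Lipschitz bounds \eqref{stima-n}--\eqref{stima-n-scalare} for the $U_n$ terms, the $\theta$-H\"older bound plus \eqref{supE} for the $B_n(X)-B_n(X_n)$ error terms, and the eigenbasis expansion of the surviving stochastic integral governed by \eqref{conv_serie_holder}. The only difference is in the closure: the paper bounds each of the integral terms ($I_3$ and $I_8$) directly by $CM_T^2\sup_{s\in[0,T]}\Delta_n(s)$ --- pulling the supremum out and integrating the exponential and the power kernel explicitly --- and then absorbs the whole $M_T^2\sup\Delta_n$ contribution on the left for $T$ small, without any Gronwall step; you instead retain the convolution form $\int_0^t\big((t-s)^{-\beta}+K(t-s)\big)\mathbb{E}[\norm{Z_n(s)}_H^2]\,ds$ and invoke a singular Gronwall lemma. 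Both closures are valid; the paper's is slightly more elementary since every nontrivial term already carries the small factor $M_T^2$, so no iteration of a kernel is needed.
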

\begin{proof}
Let $x_1,x_2\in H$ and let us set $X_i:=X(\cdot,x_i)$, $i=1,2$. We put
\[
\Delta(t):=\mathbb{E}\left[\norm{X_1(t)-X_2(t)}^2_H\right], \qquad t\in[0,T].
\]
By \eqref{mild_sol_n_pert_ibp}, for every $t\in [0,T]$ and every $n\in\N$ we have
\begin{equation}\label{unicitaL2n}
\Delta_n(t):=\mathbb{E}\left[\norm{X_{1,n}(t)-X_{2,n}(t)}^2_H\right]\leq 8\sum_{i=1}^8I_{i},
\end{equation}
where $\{X_{1,n}(t)\}_{t\in[0,T]}$ and $\{X_{2,n}(t)\}_{t\in[0,T]}$ are the processes defined in \eqref{mild_sol_n}, with $X(\cdot,x)$ replaced by $X_1$ and $X_2$, and 
\begin{align}
I_1
& :=\norm{e^{tA_n}P_nx_1-e^{tA_n}P_nx_2}^2_H+ \|\widetilde{\mathscr{L}}_ne^{tA_n}(U_n(0,P_nx_1)-U_n(0,P_nx_2))\|_H^2, \label{I1}\\
I_2
& :=\mathbb{E}\left[\|\widetilde{\mathscr L}_nU_n(t,X_{1,n}(t))-\widetilde{\mathscr L}_nU_n(t,X_{2,n}(t))\|^2_H\right] ,\label{I2}\\
I_3
& :=\mathbb{E}\left[\norm{A_n\int^t_0e^{(t-s)A_n}\widetilde{\mathscr L}_n\left(U_{n}(s,X_{1,n}(s))-U_{n}(s,X_{2,n}(s))\right)ds}^2_H\right],\label{I3}\\    I_{4+i}
& :=\mathbb{E}\left[\norm{\int^t_0e^{(t-s)A_n} \widetilde{\mathscr L}_n\left(B_n(X_{i}(s))-B_n(X_{i,n}(s))\right)ds}^2_H \right], \qquad i=1,2, \label{I4}\\    
I_{6+i}
& :=\mathbb{E}\left[\norm{\int^t_0e^{(t-s)A_n}\widetilde{\mathscr L}_n DU_{n}(s,X_{i,n}(s))\widetilde{\mathscr L}_n\left(B_n(X_{i}(s))-B_n(X_{i,n}(s))\right)ds}^2_H\right], \qquad i=1,2, \label{I6}\\
I_8
& :=\mathbb{E}\left[\norm{\int_0^te^{(t-s)A_n}\widetilde{\mathscr L}_n(DU_n(t,X_{1,n}(s))-DU_n(t,X_{2,n}(s)))G_ndW(s)}^2_H\right]\label{I8}.
\end{align}
    
Let us estimate $I_1$. By \eqref{P5}, \eqref{stimaopn}, \eqref{holdrBn} and \eqref{stima-n} we obtain
\begin{equation}\label{SI1}
I_1\leq \sup_{t\in[0,T]}\norm{e^{tA}}_{\mathcal{L}(H)}\left(1+M^2_{T}  \|\widetilde{\mathscr L}\|^2_{\mathcal{L}(H)}\|B\|^2_{C^\theta_b(H;H)}\right)\norm{x_1-x_2}^2_H.
\end{equation}

As far as $I_2$ is concerned, from  \eqref{stimaopn}, \eqref{holdrBn} and \eqref{stima-n} it follows that
\begin{equation}\label{SI2}
    I_2\leq M^2_{T}  \|\widetilde{\mathscr L}\|^2_{\mathcal{L}(H)}\|B\|^2_{C^\theta_b(H;H)}\Delta_n(t).
\end{equation}

To estimate $I_3$, we take advantage of  Hypothesis \ref{hyp:goal-addo}(i) and \eqref{stima-n} to infer that
\begin{align}\label{SI3}
I_3&\leq K^2_T\mathbb{E}\left[\left(\int^t_0 (t-s)^{-\beta}\norm{\left(U_{n}(s,X_{1,n}(s))-U_{n}(s,X_{2,n}(s))\right)}_Hds\right)^2\right]\notag\\
&\leq K^2_TM^2_T\norm{B}^2_{C_b^\theta(H;H)}\mathbb{E}\left[\left(\int^t_0 (t-s)^{-\beta}\norm{X_{1,n}(s)-X_{2,n}(s)}_Hds\right)^2\right]\notag\\
&\leq K^2_TM^2_T\norm{B}^2_{C_b^\theta(H;H)}\mathbb E\left[\left(\int_0^t(t-s)^{-\beta/2}(t-s)^{-\beta/2}\|X_{1,n}(s)-X_{2,n}(s)\|_Hds\right)^2\right]\notag\\
&\leq K^2_TM^2_T\norm{B}^2_{C_b^\theta(H;H)}\left(\int_0^t(t-s)^{-\beta}ds\right)\int_0^t(t-s)^{-\beta}\mathbb E\left[\|X_{1,n}(s)-X_{2,n}(s)\|_H^2\right]ds\notag\\
&\leq (1-\beta)^{-2}K^2_TM^2_T\norm{B}^2_{C_b^\theta(H;H)}T^{2(1-\beta)}\sup_{t\in[0,T]}\Delta_n(t).
\end{align}

As far as  \eqref{I4} and \eqref{I6} are concerned, from \eqref{P5}, \eqref{stimaopn}, \eqref{holdrBn} and \eqref{stima-n}  we get
\begin{align}
I_4+I_5&\leq T^2\sup_{t\in[0,T]}\norm{e^{tA}}^2_{\mathcal{L}(H)}\|B\|^2_{C^\theta_b(H;H)}\|\widetilde{\mathscr L}\|_{\mathcal{L}(H)}^2\Pi_{n},\label{SI45}\\
I_6+I_7&\leq T^2M_T^2\sup_{t\in[0,T]}\norm{e^{tA}}^2_{\mathcal{L}(H)}\|B\|^4_{C^\theta_b(H;H)}\|\widetilde{\mathscr L}\|_{\mathcal{L}(H)}^2\Pi_{n},\label{SI67}
\end{align}
where 
\begin{align*}
\Pi_{n}&:=\sup_{t\in [0,T]}\mathbb{E}\left[\norm{X_{1,n}(t)-X_1(t)}_H^{2\theta}\right]+\sup_{t\in [0,T]}\mathbb{E}\left[\norm{X_{2,n}(t)-X_2(t)}_H^{2\theta}\right].
\end{align*}
It remains to deal with \eqref{I8}. By applying the It\^o isometry, we get
\begin{align}\label{itooo}
I_8 & = \int_0^t\mathbb{E}\left[\norm{e^{(t-s)A_n}\widetilde{\mathscr L}_n\left[DU_n(s,X_{1,n}(s))-DU_n(s,X_{2,n}(s))\right]G_n}_{\mathscr L_2(U;H)}^2\right]dsdt.
\end{align}

Let us fix an orthonormal basis $\{u_\ell:\ell\in\N\}$ of $U$. We set 
\[
T_n=T_n(s,t):=e^{(t-s)A_n}\widetilde{\mathscr L}_n\left[DU_n(s,X_{1,n}(s))-DU_n(s,X_{2,n}(s))\right]G_n
\]
for every $s,t\in[0,T]$ with $s\leq t$. By Hypotheses \ref{hyp:goal-addo}(ii) we get 
\begin{align*}
&\int_0^t\mathbb{E}\left[\norm{T_n}_{L_2(U;H)}^2\right]ds  \leq \int_0^t\mathbb{E}\left[\sum_{\ell=1}^\infty\norm{T_n u_\ell}_H^2\right]ds
\\= & \int_0^T\mathbb{E}\left[\sum_{\ell=1}^\infty\sum_{k=1}^n\left(\sum_{h=1}^{d_k}\langle T_n u_\ell,e^k_h\rangle_H^2+\sum^{d_k}_{i,j=1,\;i\neq j}\langle T_n u_\ell,e^k_i\rangle_H\langle T_n u_\ell,e^k_j\rangle_H\scal{e^k_i}{e^k_j}_H\right)\right]ds \\
\\ \leq & \int_0^t\mathbb{E}\left[\sum_{\ell=1}^\infty\sum_{k=1}^n\left(\sum_{h=1}^{d_k}\langle T_n u_\ell,e^k_h\rangle_H^2+\frac{1}{2}\sum^{d_k}_{i,j=1,\;i\neq j}\left(\langle T_n u_\ell,e^k_i\rangle_H^2+\langle T_n u_\ell,e^k_j\rangle_H^2\right)\right)\right]ds \\
\\ = & \int_0^t\mathbb{E}\left[\sum_{\ell=1}^\infty\sum_{k=1}^n\left(\sum_{h=1}^{d_k}\langle T_n u_\ell,e^k_h\rangle_H^2+\frac{1}{2}(d_k-1)\left(\sum^{d_k}_{i=1}\langle T_n u_\ell,e^k_i\rangle_H^2+\sum^{d_k}_{j=1}\langle T_n u_\ell,e^k_j\rangle_H^2\right)\right)\right]ds \\
\\ = & \int_0^t\mathbb{E}\left[\sum_{\ell=1}^\infty\sum_{k=1}^n d_k\sum_{i=1}^{d_k}\langle T_n u_\ell,e^k_i\rangle_H^2\right]ds\leq d\int_0^T\mathbb{E}\left[\sum_{\ell=1}^\infty\sum_{k=1}^n \sum_{i=1}^{d_k}\langle T_n u_\ell,e^k_i\rangle_H^2\right]ds \\
= & d\int_0^t\sum_{\ell=1}^\infty\sum_{k=1}^n\sum_{i=1}^{d_k}\mathbb{E}\left[\langle e^{(t-s)A_n}\widetilde{\mathscr L}_n\left[DU_n(s,X_{1,n}(s))-DU_n(s,X_{2,n}(s))\right]G_n u_\ell,e^k_i\rangle_H^2\right]ds \\
\leq & d \int_0^t\sum_{\ell=1}^\infty\sum_{k=1}^n\sum_{i=1}^{d_k}e^{2(t-s){\rm Re}(\rho^k_{i})} d^2\zeta_k^2\mathbb{E}\left[\langle \left[DU_n(s,X_{1,n}(s))-DU_n(s,X_{2,n}(s))\right]G_nu_\ell,e^k_i\rangle_H^2\right]ds,
\end{align*}
where we have used the fact that for every $k=1,\ldots,n$ we have
\begin{align*}
\sum_{i=1}^{d_k}\langle \widetilde{\mathscr L}_nF,e_i^k\rangle_H^2
= & \sum_{i=1}^{d_k}\langle F,\widetilde{\mathscr L}_n e_i^k\rangle_H^2
= \sum_{i=1}^{d_k}\left(\sum_{j=1}^{d_k}\zeta_{k,i,j}\langle F, e_j^k\rangle_H\right)^2 \\
\leq & d_k \sum_{i=1}^{d_k}\sum_{j=1}^{d_k}|\zeta_{k,i,j}|^2\langle F, e_j^k\rangle_H^2\leq d^2\zeta_k^2\sum_{j=1}^{d_k}\langle F,e_j^k\rangle_H^2
\end{align*}
with $\zeta_{k,I,j}:=\scal{\widetilde{\mathscr{L}}^*e^k_i}{e^k_j}$, see Remark \ref{rmk:cont_dec_lambda_n}(i).

Setting $U_{n,k,i}:=\scal{U_n}{e^k_i}_H$ and $B^k_i:=\scal{B}{e^k_i}_H$ by \eqref{Gtilda}, \eqref{stima-n-scalare} and \eqref{scambio}, we obtain 
\begin{align*}
& \int_0^t\mathbb{E}\left[\norm{T_n}_{L_2(U;H)}^2\right]ds \\
\leq &   d^3\int_0^t\sum_{\ell=1}^\infty\sum_{k=1}^n|\zeta_k|^2\sum_{i=1}^{d_k}e^{2(t-s){\rm Re}(\rho^k_{i})}\mathbb{E}\left[\langle \nabla U_{n,k,i}(s,X_{1,n}(s))-\nabla U_{n,k,i}(s,X_{2,n}(s)),\widetilde{G}_n\mathcal{V} u_\ell\rangle_H^2\right]ds \\
= &   d^3\int_0^t\sum_{\ell=1}^\infty\sum_{k=1}^n|\zeta_k|^2\sum_{i=1}^{d_k}e^{2(t-s){\rm Re}(\rho^k_{i})}\mathbb{E}\left[\langle \widetilde{G}_n^*(\nabla U_{n,k,i}(s,X_{1,n}(s))-\nabla U_{n,k,i}(s,X_{2,n}(s))),\mathcal V u_\ell\rangle_H^2\right]ds \\
\leq & d^3\|\mathcal{V}^*\|_{\mathcal L(U)}^2 \int_0^t\sum_{k=1}^n|\zeta_k|^2\sum_{i=1}^{d_k}e^{2(t-s){\rm Re}(\rho^k_{i})}\mathbb{E}\left[\|\widetilde{G}_n^*(\nabla U_{n,k,i}(s,X_{1,n}(s))-\nabla U_{n,k,i}(s,X_{2,n}(s)))\|_H^2\right]ds \\
\leq & d^3M_{T}^2 \|\mathcal{V}^*\|_{\mathcal L(U)}^2 \int_0^t\sum_{k=1}^n|\zeta_k|^2\sum_{i=1}^{d_k}e^{2(t-s){\rm Re}(\rho^k_{i})}\|B^k_i\|_{C_b^\theta(H_n)}^2\mathbb{E}\left[\|X_{1,n}(s)-X_{2,n}(s)\|_H^2\right]ds.
\end{align*}
Recalling that ${\rm Re}(\rho^k_{i})<0$ for every $k\in\N$ and every $i\in\{1,\ldots,d_k\}$, we obtain
\begin{align*}
I_8
\leq & d^3M_{T}^2\|\mathcal{V}^*\|_{\mathcal L(U)}^2\sup_{t\in [0,T]}\Delta_n(t)\sum_{k=1}^n|\zeta_k|^2\sum_{i=1}^{d_k}\int_0^Te^{2(t-s){\rm Re}(\rho^k_{i})}\|B^k_i\|_{C_b^\theta(H_n)}^2dsdt\notag \\
= & dM_{T}^2\|\mathcal{V}^*\|_{\mathcal L(U)}^2\sup_{t\in [0,T]}\Delta_n(t)\sum_{k=1}^n|\zeta_k|^2\sum_{i=1}^{d_k}\frac{\|B^k_i\|_{C_b^\theta(H_n)}^2}{-2{\rm Re}(\rho_i^k)}.
\end{align*}
By \eqref{conv_serie_holder}, we infer that there exists a positive constant $\overline C$, independent of $n\in\N$, such that
\begin{align}\label{SI8-addo}
I_8
\leq dM_{T}^2 \|\mathcal{V}^*\|_{\mathcal L(U)}^2\overline C\sup_{t\in [0,T]}\Delta_n(t).   
\end{align}
By \eqref{SI1}, \eqref{SI2}, \eqref{SI3}, \eqref{SI45}, \eqref{SI67} and \eqref{SI8-addo}, there exists a positive constant $K_T>0$ such that 
\begin{align}
\Delta_n(t)
\leq & K_T\Big[\norm{x_1-x_2}^2_H+M_{T}^2\sup_{t\in[0,T]}\Delta_n(t)+(1+M_T^2)\Pi_n \Big] 
\label{SS3}
\end{align}
for every $t\in[0,T]$. By \eqref{supE}, it follows that $\Delta_n\to \Delta$ as $n$ goes to $\infty$. Moreover, since $\theta<1$ then $2/2\theta>1$, so by the Jensen's inequality and \eqref{supE} we deduce that $\Pi_n\to0$ as $n\to\infty$. Letting $n\to \infty$ in \eqref{SS3}, we deduce that
\begin{equation}\label{SS4}
\Delta(t)\leq K_T\Big[\norm{x_1-x_2}^2_H+M_{T}^2\sup_{t\in[0,T]}\Delta(t)\Big], \qquad \forall t\in[0,T].
\end{equation}
Noticing that, from definition, $M_T\rightarrow 0$ as $T\rightarrow 0$, we deduce that if $T>0$ is small enough, then
\begin{equation*}
\sup_{t\in[0,T]}\Delta(t)\leq 2K_{T}\norm{x_1-x_2}^2_H.
\end{equation*}
for some positive constant $C_{T}$. The statement for general $T>0$ follows by standard arguments.
\end{proof}

Now we show that it is possible to drop Hypothesis \ref{hyp:goal-addo}(ii) under the assumption that $G\in\mathcal L_2(U;H)$. In particular, we will establish that for every $T > 0$ and $x \in H$, pathwise uniqueness holds true for equation \eqref{SDE}, and the unique mild solution is Lipschitz continuous with respect to a norm stronger than the norm considered in \eqref{lip-scarsa}.

\begin{thm}\label{pathwiseuniqueneSS}
Assume that Hypotheses \ref{hyp:finito-dimensionale} and \ref{hyp:goal-addo}(i) are satisfied, and assume that $\mathcal{V}\in\mathcal L_2(U;U)$. Then, for every $T>0$ and $x\in H$ pathwise uniqueness holds true for equation \eqref{SDE}. Moreover, for every $T>0$ there exists $C_T>0$ such that for every $x_1,x_2\in H$, if $X(\cdot,x_1)$, $X(\cdot,x_2)$ are two weak mild solutions defined on the same filtered probability space with respect to the same cylindrical Wiener process $W$, then we have
\begin{equation}\label{lip-forte}
\mathbb{E}\left[\sup_{t\in [0,T]}\|X(t,x_1)-X(t,x_2)\|^2_H\right]\leq C_T\norm{x_1-x_2}^2_H.
\end{equation}
\end{thm}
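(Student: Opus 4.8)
The plan is to follow the scheme of the proof of Theorem \ref{pathwiseuniqueness}, but to replace at every step the quantity $\Delta_n(t)=\mathbb{E}[\norm{X_{1,n}(t)-X_{2,n}(t)}^2_H]$ with
\[
\widetilde{\Delta}_n(T):=\mathbb{E}\left[\sup_{t\in[0,T]}\norm{X_{1,n}(t)-X_{2,n}(t)}^2_H\right],
\]
so that the supremum sits inside the expectation. First I would observe that, since $\mathcal{V}\in\mathcal{L}_2(U;U)$ and $\widetilde{G}\in\mathcal{L}(U;H)$, the factorization \eqref{Gtilda} gives $G=\widetilde{G}\mathcal{V}\in\mathcal{L}_2(U;H)$, so that estimate \eqref{Esup} is at our disposal. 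Starting from the representation \eqref{mild_sol_n_pert_ibp} written for $X_{1,n}$ and $X_{2,n}$, I would bound $\widetilde{\Delta}_n(T)\le 8\sum_{i=1}^8\widetilde{I}_i$, where each $\widetilde{I}_i$ is the analogue of the quantity $I_i$ of \eqref{I1}--\eqref{I8} with $\sup_{t\in[0,T]}$ moved inside the expectation.

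For the deterministic contributions $\widetilde{I}_1,\ldots,\widetilde{I}_7$ the estimates are essentially those already carried out in Theorem \ref{pathwiseuniqueness}. Indeed, the uniform bound $\norm{e^{tA_n}}_{\mathcal{L}(H_n)}\le\norm{e^{tA}}_{\mathcal{L}(H)}$ from \eqref{P5}, the gradient bound \eqref{stima-n} and \eqref{holdrBn} allow me to pull $\sup_{t\in[0,T]}$ through without difficulty; for $\widetilde{I}_3$ I would invoke Hypothesis \ref{hyp:goal-addo}(i) together with $\int_0^t(t-s)^{-\beta}\,ds\le (1-\beta)^{-1}T^{1-\beta}$ and pull the supremum of $\norm{X_{1,n}(s)-X_{2,n}(s)}_H$ out of the time integral before taking expectation, exactly as in \eqref{SI3}. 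This yields bounds of the form $\widetilde{I}_i\le C\,M_T^2\,\widetilde{\Delta}_n(T)$ for $i=2,3$, plus a remainder controlled by
\[
\widetilde{\Pi}_n:=\mathbb{E}\left[\sup_{t\in[0,T]}\norm{X_{1,n}(t)-X_1(t)}_H^{2\theta}\right]+\mathbb{E}\left[\sup_{t\in[0,T]}\norm{X_{2,n}(t)-X_2(t)}_H^{2\theta}\right],
\]
which tends to $0$ as $n\to\infty$ by \eqref{Esup} and the Jensen inequality, since $2\theta<2$.

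The crux is the stochastic term $\widetilde{I}_8$, and it is here that Hypothesis \ref{hyp:goal-addo}(ii) is replaced by the Hilbert--Schmidt assumption on $\mathcal{V}$. Rather than using the It\^o isometry and the eigenvector expansion of Theorem \ref{pathwiseuniqueness}, I would apply a maximal inequality for stochastic convolutions of Kotelenez type (see \cite{Hau-Sei2001,Kot1984}), whose constant is uniform in $n$ thanks again to $\norm{e^{tA_n}}_{\mathcal{L}(H_n)}\le\norm{e^{tA}}_{\mathcal{L}(H)}$. Setting $\Phi_n(s):=\widetilde{\mathscr L}_n\left(DU_n(s,X_{1,n}(s))-DU_n(s,X_{2,n}(s))\right)G_n$, this gives $\widetilde{I}_8\le C\int_0^T\mathbb{E}[\norm{\Phi_n(s)}^2_{\mathcal{L}_2(U;H)}]\,ds$. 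Factoring $G_n=\widetilde{G}_n\mathcal{V}$ and using the ideal property of Hilbert--Schmidt operators to decouple $\mathcal{V}$,
\[
\norm{\Phi_n(s)}_{\mathcal{L}_2(U;H)}\le\norm{\mathcal{V}}_{\mathcal{L}_2(U;U)}\,\norm{\widetilde{\mathscr L}_n\left(DU_n(s,X_{1,n}(s))-DU_n(s,X_{2,n}(s))\right)\widetilde{G}_n}_{\mathcal{L}(U;H)},
\]
and then \eqref{stimaopn} together with the Lipschitz bound (constant at most $M_T\norm{B}_{C^\theta_b(H;H)}$ by \eqref{stima-n} and \eqref{holdrBn}) of the map $x\mapsto DU_n(s,x)\widetilde{G}_n$ controls the operator norm by $M_T\norm{\widetilde{\mathscr L}}_{\mathcal{L}(H)}\norm{B}_{C^\theta_b(H;H)}\norm{X_{1,n}(s)-X_{2,n}(s)}_H$. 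Since $\int_0^T\mathbb{E}[\norm{X_{1,n}(s)-X_{2,n}(s)}_H^2]\,ds\le T\,\widetilde{\Delta}_n(T)$, I arrive at $\widetilde{I}_8\le C\,M_T^2\,\norm{\mathcal{V}}^2_{\mathcal{L}_2(U;U)}\,\widetilde{\Delta}_n(T)$.

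Collecting all the estimates produces an inequality of the form
\[
\widetilde{\Delta}_n(T)\le K_T\norm{x_1-x_2}^2_H+K_TM_T^2\,\widetilde{\Delta}_n(T)+K_T(1+M_T^2)\,\widetilde{\Pi}_n.
\]
Since $M_T\to 0$ as $T\to 0$, for $T$ small enough one has $K_TM_T^2<\tfrac12$, so the middle term is absorbed into the left-hand side; letting $n\to\infty$ and using $\widetilde{\Pi}_n\to 0$ together with $\widetilde{\Delta}_n(T)\to\mathbb{E}[\sup_{t\in[0,T]}\norm{X_1(t)-X_2(t)}^2_H]$ (the latter from \eqref{Esup} and the triangle inequality in $L^2(\Omega;C([0,T];H))$) yields \eqref{lip-forte} for small $T$, and the general case follows by splitting $[0,T]$ into finitely many subintervals as at the end of Theorem \ref{pathwiseuniqueness}. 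I expect the $\widetilde{I}_8$ step to be the main obstacle: securing a maximal inequality with constant uniform in $n$ and arranging the factorization so that the Hilbert--Schmidt norm of $\mathcal{V}$ is cleanly separated from the operator-norm Lipschitz bound on $DU_n(\cdot,\cdot)\widetilde{G}_n$, which is exactly what lets us dispense with Hypothesis \ref{hyp:goal-addo}(ii).
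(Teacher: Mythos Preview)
Your proposal is correct and follows essentially the same route as the paper's proof: both replace $\Delta_n(t)$ by $\mathbb{E}[\sup_{t\in[0,T]}\norm{X_{1,n}(t)-X_{2,n}(t)}^2_H]$, handle $\widetilde I_1$--$\widetilde I_7$ as in Theorem~\ref{pathwiseuniqueness} (with the same use of Hypothesis~\ref{hyp:goal-addo}(i) for $\widetilde I_3$), and treat $\widetilde I_8$ via the Kotelenez maximal inequality combined with the factorization $G_n=\widetilde G_n\mathcal V$ and the $C^1_b$ bound \eqref{stima-n} on $DU_n(\cdot,\cdot)\widetilde G_n$, so that $\norm{\mathcal V}_{\mathcal L_2(U;U)}$ replaces the eigenvector expansion of Hypothesis~\ref{hyp:goal-addo}(ii). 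The only cosmetic difference is the order in which you absorb the $M_T^2$ term and pass to the limit $n\to\infty$; both orders work.
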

\begin{proof}
The proof is a slight modification of the one of Theorem \ref{pathwiseuniqueness}. For $i=1,2$ we denote by $X_i$ the process $X(\cdot,x_i)$ and, for every $x_1,x_2\in H$ and every $n\in\N$, we set
\[
\Delta:=\mathbb{E}\left[\sup_{t\in [0,T]}\norm{X_{1}(t)-X_{2}(t)}^2_H\right],\quad \Delta_n:=\mathbb{E}\left[\sup_{t\in [0,T]}\norm{X_{1,n}(t)-X_{2,n}(t)}^2_H\right].
\]
The estimates which change with respect to those in the proof of Theorem \ref{pathwiseuniqueness} are the estimate of the terms corresponding to $I_3$ and $I_8$. As far as the estimate of the term corresponding to $I_3$ is considered, from Hypothesis \ref{hyp:goal-addo}(i) we get
\begin{align*}
&\mathbb{E}\left[\sup_{t\in[0,T]}\norm{A_n\int^t_0e^{(t-s)A_n}\widetilde{\mathscr L}_n\left(U_{n}(s,X_{1,n}(s))-U_{n}(s,X_{2,n}(s))\right)ds}^2_H\right] \\
\leq & (1-\beta)K_T^2M_T^2\|B\|_{C^\beta_b(H;H)}^2T^{1-\beta}\mathbb E\bigg[\sup_{t\in[0,T]}\int_0^t(t-s)^{-\beta}\|X_{1,n}(s)-X_{2,n}(s)\|^2_Hds\bigg] \\
\leq & (1-\beta)^2K_T^2M_T^2\|B\|_{C^\beta_b(H;H)}^2T^{2(1-\beta)}\mathbb E[\Delta_n].
\end{align*}
The estimate of the term corresponding to $I_8$ (see \eqref{I8}) is more direct. Indeed, in this setting by the Kotelentz inequality, \eqref{Gtilda}, \eqref{stimaopn}, \eqref{holdrBn} and \eqref{stima-n} we get 
\begin{align*}
   I_8&:=\mathbb{E}\left[\sup_{t\in [0,T]}\norm{\int_0^te^{(t-s)A_n}\widetilde{\mathscr L}_n(DU_n(s,X_{1,n}(s))-DU_n(s,X_{2,n}(s)))\widetilde{G}_n\mathcal{V}dW(s)}^2_H\right]\\
  &\leq C\int_0^T\mathbb{E}\left[\norm{\widetilde{\mathscr L}_n(DU_n(s,X_{1,n}(s))-DU_n(t,X_{2,n}(s))))\widetilde{G}_n\mathcal{V}}^2_{\mathcal{L}_2(U;H)}\right]ds\\
   &\leq C M_T^2\|B\|^2_{C^\theta_b(H;H)}T\Delta_n\|\widetilde{\mathscr L}\|_{\mathcal{L}(H)}^2\|\mathcal{V}\|^2_{\mathcal{L}_2(U;U)}.
\end{align*}
By similar calculations to those in the proof of Theorem \ref{pathwiseuniqueness}, we obtain
\[
\Delta_n\leq K_T\left[\norm{x_1-x_2}^2_H+M_{T}^2\Delta_n+(1+M_T^2)\Pi_n\right],
\]
where $K_T$ is a positive constant and 
\begin{align*}
\Pi_{n}&:=\mathbb{E}\left[\sup_{t\in [0,T]}\norm{X_{1,n}(t)-X_1(t)}_H^{2\theta}+\sup_{t\in [0,T]}\norm{X_{2,n}(t)-X_2(t)}_H^{2\theta}\right].
\end{align*}
Letting $n$ tend to infinity and exploiting \eqref{Esup}, it follows that
\begin{align*}
&
\Delta \leq K_T\norm{x-y}^2_H
+K_TM_{T}^2\Delta
\end{align*}
and so the statement follows recalling that $M_T$ vanishes as $t$ tends to $0$.
\end{proof}

\begin{coro}\label{Strong}
Assume Hypotheses that \ref{hyp:finito-dimensionale} and \ref{hyp:goal-addo} hold and that $(e^{tA})_{t\geq 0}$ is a compact semigroup. Then strong existence holds true for equation \eqref{SDE}.    
\end{coro}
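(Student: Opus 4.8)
The plan is to invoke the infinite-dimensional Yamada--Watanabe principle, which in the present Hilbert-space framework is precisely the content of \cite{On04}: for a stochastic evolution equation of the type \eqref{SDE}, the conjunction of \emph{weak existence} and \emph{pathwise uniqueness} yields strong existence (indeed strong uniqueness as well). Since $H$ is a separable Hilbert space, and hence a $2$-smooth Banach space, the abstract result of \cite{On04} applies directly to the mild formulation \eqref{mild_sol}. Thus the whole corollary reduces to checking its two hypotheses under the standing assumptions.

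First I would secure weak existence. The assumption that $(e^{tA})_{t\geq 0}$ is a compact semigroup is exactly what is needed here: by Remark \ref{Weak}, which rests on \cite[Proposition 3]{Cho-Gol1995}, the compactness of the semigroup guarantees that \eqref{SDE} admits at least one weak mild solution in the sense of Definition \ref{weak-solution}. No further structure on the coefficients is required at this stage beyond the boundedness and H\"older continuity already imposed in Hypotheses \ref{hyp:finito-dimensionale}.

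Second, pathwise uniqueness is furnished directly by Theorem \ref{pathwiseuniqueness}. Indeed, Hypotheses \ref{hyp:finito-dimensionale} and \ref{hyp:goal-addo} are assumed in the statement of the present corollary, so the Lipschitz estimate \eqref{lip-scarsa} holds; in particular, two weak mild solutions driven by the same cylindrical Wiener process and starting from the same initial datum coincide, which is exactly pathwise uniqueness.

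Having both ingredients at hand, I would conclude by applying the Yamada--Watanabe--Ondrej\'at theorem \cite{On04}: weak existence together with pathwise uniqueness implies that, for every $U$-cylindrical Wiener process $W$ on an arbitrary filtered probability space, there exists an adapted process $X$ solving \eqref{mild_sol}, i.e.\ strong existence in the sense of Definition \ref{weak-solution}. The only point requiring genuine care is the verification that the abstract solution concept of \cite{On04} matches the mild notion of Definition \ref{weak-solution}, and that the $2$-smoothness of $H$ is correctly invoked to license the stochastic-integration estimates underlying that theorem. Once this bookkeeping is settled, the argument is essentially a citation, and I expect this compatibility check---rather than any new analytic estimate---to be the only real obstacle.
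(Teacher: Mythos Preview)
Your proposal is correct and follows exactly the paper's own proof: weak existence from \cite[Proposition 3]{Cho-Gol1995} via compactness of the semigroup, pathwise uniqueness from Theorem \ref{pathwiseuniqueness}, and then the Yamada--Watanabe--Ondrej\'at result \cite{On04} to conclude strong existence. The additional remarks you make about matching solution concepts and the $2$-smoothness of $H$ are reasonable caveats but do not deviate from the paper's argument.
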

\begin{proof}
By \cite[Proposition 3]{Cho-Gol1995}, it follows that there exists a weak mild solution to \eqref{SDE}. Further, Theorem \eqref{pathwiseuniqueness} gives pathwise uniqueness for \eqref{SDE}. Therefore, from \cite{On04}, which states that weak existence and pathwise uniqueness for equation \eqref{SDE} imply strong existence, we obtain the desired result.
\end{proof}

\begin{rmk}
Let $T>0$. Assume that the assumptions of Theorem \ref{pathwiseuniqueness} hold true. Hence, for every $x\in H$ the SPDE \eqref{SDE} has a pathwise unique mild solution $\{X(t,x)\}_{t\in [0,T]}$. For every $n\in\N$ and $x\in H$ we consider the unique mild solution $\{\widehat{X}_n(t,x)\}_{t\in [0,T]}$ to 
\begin{align*}
\left\{
\begin{array}{ll}
\displaystyle  d\widehat{X}_n(t)=A_n\widehat{X}_n(t)dt+\widetilde{\mathscr{L}}_n B_n(\widehat{X}_n(t))dt+G_ndW(t), \quad t\in[0,T],  \vspace{1mm} \\
\widehat{X_n}(0)= P_nx.
\end{array}
\right.
\end{align*}
By similar calculations to the one in \cite[Section 4.4]{AddBig}, for every $x\in H$ we get
\begin{equation*}
\lim_{n \to \infty}\sup_{t\in [0,T]}\mathbb{E}\left[\|\widehat{X}_{n}(t,x)-X(t,x)\|_H^2\right]=0.
\end{equation*}
Instead, if assumptions of Theorem \ref{pathwiseuniqueneSS} hold true (so $G\in\mathcal{L}_2(U;H)$) then for every $x\in H$ we get
\begin{equation*}
\lim_{n \to \infty}  \mathbb{E}\left[\sup_{t\in [0,T]}\|\widehat{X}_{n}(t,x)-X(t,x)\|_H^2\right]=0.
\end{equation*}
\end{rmk}

\section{Examples}\label{Examples}

\subsection{Heat equation with structure condition}\label{Heatcase}
Consider the SPDE
\begin{align}\label{eqFObeta}
\left\{
\begin{array}{ll}
\displaystyle  dX(t)=-(-\Delta)^{\beta}X(t)dt+(-\Delta)^{-\sigma}B(X(t))dt+(-\Delta)^{-\gamma/2}dW(t), \quad t\in[0,T],  \vspace{1mm} \\
X(0)=x\in H,
\end{array}
\right.
\end{align}
where $B\in C^\theta_b(H;H)$, for some $\theta\in (0,1)$, $\beta,\gamma,\sigma\geq 0$ and $\Delta$ is the realization of the Laplace operator with periodic boundary conditions in $H=L^2([0,2\pi]^m)$ with $m=1,2,3$. We are going to show that the pathwise uniqueness holds true for SPDE \eqref{eqFObeta}.

We recall that $A$ is self-adjoint and there exists an orthonormal basis $\{e_k:k\in\N\}$ of $H$ consisting in eigenvectors of $A$. Hence, the spaces $\{H_n\}_{n\in\N}$ are given by
\[
H_0=\{0\},\quad H_n:={\rm span}\{e_1,...,e_n\},\quad n\in\N,
\]
verify Hypotheses \ref{hyp:finito-dimensionale}(iii) and Hypotheses \ref{hyp:goal-addo}(ii)(a)-(c) with $d_n=1$ for every $n\in\N$. Moreover, for every $k\in\N$, we have
\begin{equation}\label{autovalori}
\Delta e_k=-\lambda_ke_k, \quad \lambda_k\sim k^{2/m}.
\end{equation}
By easy computations, for every $t>0$ and $n\in\N$ we obtain that
\begin{align*}
& Q_t:=\int_0^te^{-2s(-\Delta)^{\beta}}(-\Delta)^{-\gamma}ds=\frac12(-\Delta)^{-(\beta+\gamma)}(\Id_H-e^{-2t(-\Delta)^{\beta}}),
\qquad Q_{t,n}=P_nQ_{t}P_n,
\end{align*}
where $Q_{t,n}$ is defined in Hypothesis \ref{hyp:finito-dimensionale}(iv) and $P_n$ is the orthogonal projection on $H_n$.

\begin{prop}
If $(m-2\beta)/2<\gamma< \beta\theta/(2-\theta)$ and $\sigma>\max\{0,(m-2\beta)/4\}$ then Hypotheses \ref{hyp:finito-dimensionale} and \ref{hyp:goal-addo} hold true. So for every $T>0$ and $x\in H$ the SPDE \eqref{eqFObeta} has unique mild solution $\{X(t,x)\}_{t\in [0,T]}$, and there exists $C_T>0$ such that for every $x,y\in H$ we have
\[
\sup_{t\in [0,T]}\mathbb{E}\left[\norm{X(t,x)-X(t,y)}_H\right]\leq C_T\norm{x-y}_H. 
\]
\end{prop}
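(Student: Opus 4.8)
The plan is to specialize the abstract result of Theorem \ref{pathwiseuniqueness} to \eqref{eqFObeta}. First I would fix the dictionary between \eqref{SDE} and \eqref{eqFObeta}: being in the structure-condition (heat) case, I take $U=H$, $\widetilde G=\Id_H$, $\mathcal V=(-\Delta)^{-\gamma/2}$, $\widetilde B=B$ and $\widetilde{\mathscr L}=\mathcal K=(-\Delta)^{-\sigma}$, with $A=-(-\Delta)^\beta$. All these operators are functions of $-\Delta$, hence simultaneously diagonal in the eigenbasis $\{e_k\}$, and (as is implicit in \eqref{autovalori}) I work on the mean-zero subspace so that $\lambda_k>0$ and the negative fractional powers are well defined. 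With this dictionary the factorizations \eqref{Gtilda}, \eqref{Btilda} and \eqref{commutazione} hold by construction, while the commutation requirement in Hypotheses \ref{hyp:finito-dimensionale}\eqref{Accan} and the invariance requirements of Hypotheses \ref{hyp:goal-addo}(ii) are automatic because every operator is diagonal on $\{e_k\}$. It then remains only to check the two quantitative hypotheses; existence of a weak (hence, with pathwise uniqueness, strong) solution follows from the compactness of the analytic semigroup generated by $-(-\Delta)^\beta$, cf. Remark \ref{Weak} and Corollary \ref{Strong}.

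The heart of the verification is a pair of spectral asymptotics. For Hypothesis \ref{hyp:finito-dimensionale}(v) I would compute
\[
{\rm Trace}_H\!\left[e^{sA}GG^*e^{sA^*}\right]=\sum_k e^{-2s\lambda_k^\beta}\lambda_k^{-\gamma},
\]
and, using $\lambda_k\sim k^{2/m}$ together with the substitution $u=2s\lambda^\beta$, show that this behaves like $s^{-(m-2\gamma)/(2\beta)}$ as $s\to0^+$; integrability of $s^{-\eta}$ times the trace for some $\eta\in(0,1)$ is then equivalent to $(m-2\gamma)/(2\beta)<1$, i.e. to the \emph{lower} bound $\gamma>(m-2\beta)/2$. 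For Hypothesis \ref{hyp:finito-dimensionale}(vii) I would exploit the explicit $Q_t$ recorded before the statement: $\Gamma_s=Q_s^{-1/2}e^{sA}$ is diagonal with entries $\sqrt2\,e^{-s\lambda_k^\beta}\lambda_k^{(\beta+\gamma)/2}(1-e^{-2s\lambda_k^\beta})^{-1/2}$, and the same substitution $u=s\lambda^\beta$ gives $\|\Gamma_s\|_{\mathcal L(H)}\lesssim s^{-(\beta+\gamma)/(2\beta)}$ (the inclusion \eqref{contron} being checked simultaneously, since all entries of $Q_s$ are strictly positive). As $\widetilde G=\Id_H$, condition \eqref{supercontron} reduces to $\int_0^t\|\Gamma_s\|_{\mathcal L(H)}^{2-\theta}\,ds<\infty$, which holds exactly when $(2-\theta)(\beta+\gamma)/(2\beta)<1$, i.e. the \emph{upper} bound $\gamma<\beta\theta/(2-\theta)$; the auxiliary requirement with some $\theta'<\theta$ then follows by continuity from this strict inequality.

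For Hypotheses \ref{hyp:goal-addo} I would first treat (i): $A_ne^{tA_n}\widetilde{\mathscr L}_n$ is diagonal with entries $-\lambda_k^{\beta-\sigma}e^{-t\lambda_k^\beta}$, whose supremum over $k$ (hence over $n$) is $O(t^{-(\beta-\sigma)/\beta})$ when $\sigma<\beta$ and $O(1)$ when $\sigma\ge\beta$; in both cases the exponent lies in $[0,1)$ precisely because $\sigma>0$, so (i) holds with the constant $\bar\beta=\max\{0,1-\sigma/\beta\}$ (denoted $\beta$ in the hypothesis). For (ii), the splitting $d_n=1$ reduces \eqref{conv_serie_holder}, equivalently \eqref{conv_serie_holder_var} since the $e_k$ are eigenvectors of $\widetilde{\mathscr L}$ (cf. Remark \ref{rmk:cont_dec_lambda_n}(ii)), with $\rho^k_1=-\lambda_k^\beta<0$ and $\zeta_k=\lambda_k^{-\sigma}$, to
\[
\sum_k \lambda_k^{-2\sigma-\beta}\,\|\langle B(\cdot),e_k\rangle_H\|_{C^\theta_b(H)}^2<\infty.
\]
Here I would use the uniform bound $\|\langle B(\cdot),e_k\rangle_H\|_{C^\theta_b(H)}\le 2\|B\|_{C^\theta_b(H;H)}$, leaving the summability of $\sum_k\lambda_k^{-2\sigma-\beta}\sim\sum_k k^{-(2/m)(2\sigma+\beta)}$, which holds precisely when $2\sigma+\beta>m/2$, i.e. $\sigma>(m-2\beta)/4$; together with $\sigma>0$ this is exactly the stated hypothesis.

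With both sets of hypotheses verified, Theorem \ref{pathwiseuniqueness} yields pathwise uniqueness and the $L^2$ estimate \eqref{lip-scarsa}. The claimed $L^1$ bound then follows immediately from the Cauchy--Schwarz (Jensen) inequality, $\mathbb{E}[\|X(t,x)-X(t,y)\|_H]\le\big(\mathbb{E}[\|X(t,x)-X(t,y)\|_H^2]\big)^{1/2}\le\sqrt{C_T}\,\|x-y\|_H$, and taking the supremum in $t$ preserves the inequality since the square root is increasing. I expect the main obstacle to be the two blow-up computations of the trace and of $\|\Gamma_s\|_{\mathcal L(H)}$ as $s\to0^+$: pinning down the exact exponents $(m-2\gamma)/(2\beta)$ and $(\beta+\gamma)/(2\beta)$, rather than crude bounds, is what makes the two thresholds on $\gamma$ sharp and shows they coincide with the stated ones. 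By comparison, the convergence of the Hölder series is soft once one observes that the $C^\theta_b$-norms of the scalar components $\langle B(\cdot),e_k\rangle_H$ are bounded uniformly in $k$.
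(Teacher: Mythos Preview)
Your proposal is correct and follows the same route as the paper: verify Hypotheses \ref{hyp:finito-dimensionale} and \ref{hyp:goal-addo} for the concrete operators and then invoke Theorem \ref{pathwiseuniqueness}. The only difference is one of presentation: the paper outsources the verification of Hypotheses \ref{hyp:finito-dimensionale} (the trace and control bounds that give the two thresholds on $\gamma$) to \cite[Proposition 5.13]{AddBig} and disposes of Hypothesis \ref{hyp:goal-addo}(i) by citing analytic--semigroup estimates from \cite[Proposition 2.1.1]{Lun95}, whereas you carry out both spectral computations explicitly. Your direct calculation of $\|A_ne^{tA_n}\widetilde{\mathscr L}_n\|\lesssim t^{-(1-\sigma/\beta)\vee 0}$ is exactly the content of the Lunardi estimate once one notes $\widetilde{\mathscr L}=(-\Delta)^{-\sigma}$ behaves like $(-A)^{-\sigma/\beta}$; the paper's and your check of \eqref{conv_serie_holder_var} are identical.
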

\begin{proof}
To prove the statement it is enough to show that the assumptions of Theorem \ref{pathwiseuniqueness} are verified. In view of \cite[Proposition 5.13]{AddBig}, if $(m-2\beta)/2<\gamma< \beta\theta/(2-\theta)$ then Hypotheses \ref{hyp:finito-dimensionale} hold true. By \cite[Proposition 2.1.1]{Lun95} (with $A=-(-\Delta)^{\beta}$) Hypotheses \ref{hyp:goal-addo}(i) hold true. We only need to prove \eqref{conv_serie_holder_var}. If $\sigma>\max\{0,(m-2\beta)/4\}$ then  \eqref{conv_serie_holder_var} holds true since by \eqref{autovalori} we have
\[
\sum_{n\in\N}\frac{1}{\lambda_k^{2\sigma+\beta}}\sim \sum_{n\in\N}\frac{1}{k^{(4\sigma+2\beta)/m}}<\infty.
\]
Hence all the assumptions of Theorem \ref{pathwiseuniqueness} hold true.
\end{proof}

\subsection{Stochastic damped equations}
\label{sub:stoch_damp_eq}
We deal with a semilinear stochastic differential equation of the form
\begin{align}\label{damped_stoc_equation}
\left\{
\begin{array}{ll}
\displaystyle \frac{\partial^2y}{\partial t^2}(t)
= -\Lambda y(t)-\rho\Lambda^{\alpha}\left(\frac{\partial y}{\partial t}(t)\right)+\Lambda^{-\sigma}C\left(y(t),\frac{\partial y}{\partial t}(t)\right)+\Lambda^{-\gamma}d{W}(t), & t\in[0,T], \vspace{1mm} \\
y(0)=y_0, & \vspace{1mm} \\
\displaystyle \frac{\partial y}{\partial t}(0)=y_1,
\end{array}
\right.
\end{align}
with $\alpha\in(0,1)$ and $\rho,\sigma,\gamma\geq0$. We consider a self-adjoint operator $\Lambda:D(\Lambda)\subseteq U\to U$ of positive type with compact resolvent, simple eigenvalues $(\mu_n)_{n\in\N}$, blowing up as $n$ tends to infinity, and corresponding (non normalized) eigenvectors $\{e_n:n\in\N\}$. 

We set $H:=U\times U$. The operators  $A:D(A)\subseteq H \to H$, $G,\widetilde G:U\to H$ and $\widetilde{\mathscr L}:H\to H$ are respectively defined as
\begin{align}
\label{damped_def_op_A_G_Lambda}
& D(A):= \left\{\begin{pmatrix}
h_1 \\ h_2    
\end{pmatrix}:h_2\in D(\Lambda^{\frac12}), \ h_1+\rho\Lambda^{\alpha-\frac12}h_2\in D(\Lambda^{\frac12})\right\}, \notag \\ 
& A:=\begin{pmatrix}
0 & \Lambda^{\frac12} \\
-\Lambda^{\frac12} & -\rho \Lambda^{\alpha}
\end{pmatrix}, 
\quad \widetilde G=\begin{pmatrix}
0 \\ {\rm Id}   
\end{pmatrix}, \quad \widetilde{\mathscr L}=\begin{pmatrix}
0 & 0 \\
0 & \Lambda^{-\sigma}
\end{pmatrix}, \quad \mathcal V=\Lambda^{-\gamma},   \quad G=\widetilde G\Lambda^{-\gamma}  
\end{align}
and $B(h)=\widetilde G\widetilde B(h)$ for every $h=\begin{pmatrix}
h_1 \\ h_2    
\end{pmatrix}\in H$, where $\widetilde B(h)=C(\Lambda^{-\frac12}h_1,h_2)$ for every $h=\begin{pmatrix}
h_1 \\ h_2 \end{pmatrix}\in H$. It follows that, if $C\in C_b^\theta(H;U)$ for some $\theta\in(0,1)$, then $\widetilde B\in C_b^\theta(H;U)$ and $\|\widetilde B\|_{C_b^\theta(H;U)}\leq c\|C\|_{C_b^\theta(H;U)}$ for some positive constant $c$. \\
In this setting, equation \eqref{damped_stoc_equation} has the form
\begin{align*}
dX(t)=AX(t)dt+\widetilde{\mathscr L}B(X(t))dt+GdW(t), \quad t\in[0,T], \quad X(0)=x\in H,    
\end{align*}
where $x=\begin{pmatrix}
\Lambda^{\frac12}y_0 \\ y_1    
\end{pmatrix}\in H$ with $y_0\in D(\Lambda^{\frac12})$ and $y_1\in U$, $X=\begin{pmatrix}
X_1 \\ X_2    
\end{pmatrix}$, $y=\Lambda^{-\frac12}X_1$ and $\frac{\partial y}{\partial t}=X_2$.

If $\rho^2\neq 4\mu_n^{1-2\alpha}$ for every $n\in\N$, it follows that $A$ has simple eigenvalues $(\lambda_n^{\pm})_{n\in\N}$, given by \begin{align*}
\lambda_n^{\pm}
= \frac{-\rho\mu_n^{\alpha}\pm\sqrt{\rho^2\mu_n^{2\alpha}-4\mu_n}}{2}, \quad \lambda_n^++\lambda_n^-=-\rho \mu_n^\alpha, \quad \lambda_n^-\cdot \lambda_n^+=\mu_n \qquad n\in\N,
\end{align*}
with corresponding eigenvectors
\begin{align*}
\Phi_n^+=\begin{pmatrix}
\mu_n^{\frac12}e_n \\ \lambda_n^+ e_n    
\end{pmatrix},
\qquad \Phi_n^-=\chi_n\begin{pmatrix}
\mu_n^{\frac12}e_n \\ \lambda_n^- e_n    
\end{pmatrix}, \qquad n\in\N.
\end{align*}
We also notice that
\begin{align*}
A^*=\begin{pmatrix}
0 & -\Lambda^{\frac12} \\
\Lambda^{\frac12} & -\rho \Lambda^\alpha
\end{pmatrix},    
\end{align*}
with same eigenvalues $(\lambda_n^{\pm})_{n\in\N}$ of $A$ and corresponding eigenvectors
\begin{align*}
\Psi_n^+=\begin{pmatrix}
-\mu_n^{\frac12}e_n \\ \lambda_n^+ e_n    
\end{pmatrix},
\qquad \Psi_n^-=\chi_n\begin{pmatrix}
-\mu_n^{\frac12}e_n \\ \lambda_n^- e_n    
\end{pmatrix}, \qquad n\in\N.    
\end{align*}
We decompose $H=H^++H^-$ (non-orthogonal, direct sum), where
\begin{align*}
{H^+}=\overline{\{\Phi_n^+:n\in\N\}}=\overline{\{\Psi_n^+:n\in\N\}}, \qquad {H^-}=\overline{\{\Phi_n^-:n\in\N\}}=\overline{\{\Psi_n^-:n\in\N\}}.    
\end{align*}
Every $h\in H$ can be uniquely written as $h=h^++h^-$, where $h^+\in H^+$ and $h^-\in H^-$. Further, we can read the action of $A, e^{tA}$ and $R(\lambda,A)$ in terms of such a decomposition:
\begin{align}
& Ah=\sum_{n=1}^\infty\left(\lambda_n^+\langle h^+,\Phi_n^+\rangle \Phi_n^++\lambda_n^-\langle h^-,\Phi_n^-\rangle \Phi_n^-\right), \qquad h\in D(A), \label{damped_az_A_H+H-}\\
& e^{tA} = \sum_{n=1}^\infty\left(e^{\lambda_n^+t}\langle h^+,\Phi_n^+\rangle \Phi_n^++e^{\lambda_n^-t}\langle h^-,\Phi_n^-\rangle \Phi_n^-\right), \qquad h\in H, \label{damped_az_etA_H+H-}\\ 
& R(\lambda,A) = \sum_{n=1}^\infty\left(\frac{1}{\lambda-\lambda_n^+}\langle h^+,\Phi_n^+\rangle \Phi_n^++\frac{1}{\lambda-\lambda_n^-}\langle h^-,\Phi_n^-\rangle \Phi_n^-\right), \qquad h\in H \notag
\end{align}
for every $t\geq0$ and every $\lambda\in \rho(A)$. We also recall that 
\begin{align}
\label{damped_op_G_dec}
\widetilde Gu=\begin{pmatrix}
0 \\ u    
\end{pmatrix}
= \sum_{n=1}^\infty\left(b_n^+u_n\Phi_n^++b_n^-u_n\Phi_n^-\right),   \qquad u\in U, 
\end{align}
where $u_n=\langle u,e_n/\|e_n\|_U\rangle_U$ for every $u\in U$ and every $n\in\N$. Since 
\begin{align}
\label{damped_op_lambda}
\widetilde{\mathscr L} k=\begin{pmatrix}
0 & 0 \\
0 & \Lambda^{-\sigma}
\end{pmatrix}\begin{pmatrix}
k_1 \\ k_2    
\end{pmatrix}=\begin{pmatrix}
0 \\ \Lambda^{-\sigma}k_2
\end{pmatrix}, \qquad k=\begin{pmatrix}
k_1\\ k_2    
\end{pmatrix}\in H,     
\end{align}
from \eqref{damped_op_G_dec} and \eqref{damped_op_lambda} we deduce that
\begin{align}
\label{damped_op_lambda_dec}
\widetilde{\mathscr L} k
= \sum_{n=1}^\infty\left(b_n^+\mu_n^{-\sigma}(k_2)_n\Phi_n^++b_n^-\mu_n^{-\sigma}(k_2)_n\Phi_n^-\right)
= \sum_{n=1}^\infty\mu_n^{-\sigma}(k_2)_n\left(b_n^+\Phi_n^++b_n^-\Phi_n^-\right)
\end{align}
for every $k\in H$. Let us prove that Hypothesis \ref{hyp:goal-addo}(ii)(c) is fulfilled. Fix $n\in\N$. In this case $K_n:={\rm span}\{\Phi_n^+,\Phi_n^-\}$ and from \eqref{damped_op_lambda_dec} we infer that
\begin{align}
\widetilde{\mathscr L} \Phi_n^+
= & \mu_n^{-\sigma}\lambda_n^+\|e_n\|_U(b_n^+\Phi_n^++b_n^-\Phi_n^-) \in K_n, 
\label{damped_Lambda_tilde_K_n+}\\
\widetilde{\mathscr L} \Phi_n^-
= & \mu_n^{-\sigma}\chi_n\lambda_n^-\|e_n\|_U(b_n^+\Phi_n^++b_n^-\Phi_n^-) \in K_n,
\label{damped_Lambda_tilde_K_n-}
\end{align}
since for every $m\in\N$ we get $((\Phi_n^+)_2)_m=\delta_{mn}\lambda_n^+\|e_n\|_U$ and $((\Phi_n^-)_2)_m=\delta_{mn}\chi_n\lambda_n^-\|e_n\|_U$.

Finally, we recall that, from the above construction, it follows that, if $\alpha\in\left(0,\frac12\right]$, then \begin{align}
& {\rm Re}(\lambda_n^{+}), {\rm Re}(\lambda_n^{-})\sim -\mu_n^\alpha, \quad |\lambda_n^{+}|,|\lambda_n^{-}|\sim \mu_n^{\frac12}, \quad  \,  \|e_n\|_{U}\sim \mu_n^{-\frac12}, \quad |\lambda_n^+-\lambda_n^-|\sim \mu_n^{\frac12}, \notag \\
& b_n^{\pm}\sim {\rm const}(b), \quad \chi_n\sim {\rm const}(\chi),
\label{damped_stime_coefficienti_avl}
\end{align}
definitively with respect to $n\in\N$ (see also formulae \cite[(2.3.14)-(2.3.18)]{trig}). The case $\alpha\in\left[\frac12,1\right)$ is analogously treated. We stress that, in this case, the asymptotic behaviour in \eqref{damped_stime_coefficienti_avl} is replaced by
\begin{align}
& {\rm Re}(\lambda_n^+)\sim -\mu_n^{1-\alpha}, \quad {\rm Re}(\lambda_n^-)\sim -\mu_n^\alpha, \quad |\lambda_n^+|\sim \mu_n^{1-\alpha}, \quad |\lambda_n^-|\sim\mu_n^\alpha, \quad  \|e_n\|_{U}\sim \mu_n^{-\frac12}, \notag \\
& |\lambda^-_n-\lambda_n^+|\sim \mu_n^{\alpha}, \quad b_n^-\sim {\rm const}(b), \quad \chi_n,b_n^+\sim \mu_n^{\frac12-\alpha}
\label{damped_stime_coeff_avl_2}
\end{align}
definitively with respect to $n\in\N$. 

Thanks to \eqref{damped_stime_coefficienti_avl} and \eqref{damped_stime_coeff_avl_2}, we are able to estimate the asymptotic behaviour of $\zeta_n$, introduced in Hypothesis \ref{hyp:goal-addo}(ii)(c).
\begin{lemma}
For every $n\in\N$, we get $\zeta_n\sim \mu_n^{-\sigma}$ definitively, for every $\alpha\in(0,1)$.
\end{lemma}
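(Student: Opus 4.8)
The plan is to compute $\zeta_n=\|\widetilde{\mathscr L}\|_{\mathcal L(K_n)}$ by recognizing what the two-dimensional space $K_n=\mathrm{span}\{\Phi_n^+,\Phi_n^-\}$ and the restriction of $\widetilde{\mathscr L}$ to it really are. Since both $\Phi_n^+$ and $\Phi_n^-$ have the form $(c_1e_n,c_2e_n)^\top$ and are linearly independent — their coordinate matrix in the basis $\{(e_n,0)^\top,(0,e_n)^\top\}$ has determinant $\chi_n\mu_n^{1/2}(\lambda_n^--\lambda_n^+)\neq0$, using that the eigenvalues are simple, i.e.\ $\lambda_n^+\neq\lambda_n^-$, which is exactly what $\rho^2\neq 4\mu_n^{1-2\alpha}$ guarantees — the space $K_n$ coincides with the full ``$e_n$-mode'' subspace
\[
K_n=\{(c_1e_n,c_2e_n)^\top:\ c_1,c_2\in\mathbb C\}\subseteq H=U\times U.
\]

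On this subspace the action of $\widetilde{\mathscr L}$ is transparent: by \eqref{damped_op_lambda} we have $\widetilde{\mathscr L}(c_1e_n,c_2e_n)^\top=(0,\Lambda^{-\sigma}(c_2e_n))^\top=(0,\mu_n^{-\sigma}c_2e_n)^\top$. Thus, in the \emph{orthonormal} (in $H$) basis $\{(e_n/\|e_n\|_U,0)^\top,\ (0,e_n/\|e_n\|_U)^\top\}$ of $K_n$, the operator $\widetilde{\mathscr L}|_{K_n}$ is represented by the diagonal matrix $\mathrm{diag}(0,\mu_n^{-\sigma})$. Its operator norm is the largest singular value, namely $\mu_n^{-\sigma}$, so that $\zeta_n=\mu_n^{-\sigma}$ exactly; the same value is obtained for $\widetilde{\mathscr L}^*$ since $\widetilde{\mathscr L}$ is self-adjoint on $H$ (because $\Lambda^{-\sigma}$ is self-adjoint). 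This yields $\zeta_n\sim\mu_n^{-\sigma}$ for every $\alpha\in(0,1)$, in fact with equality.

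If one prefers to argue directly through the given data \eqref{damped_Lambda_tilde_K_n+}--\eqref{damped_Lambda_tilde_K_n-}, the same conclusion follows by writing $k=a\Phi_n^++b\Phi_n^-$, computing $\widetilde{\mathscr L} k=\mu_n^{-\sigma}\|e_n\|_U(a\lambda_n^++b\chi_n\lambda_n^-)v_n$ with $v_n:=b_n^+\Phi_n^++b_n^-\Phi_n^-$, observing that $v_n=\widetilde G(e_n/\|e_n\|_U)=(0,e_n/\|e_n\|_U)^\top$ by \eqref{damped_op_G_dec} (so $\|v_n\|_H=1$), and noting that the factor $\|e_n\|_U$ cancels between numerator and denominator of the Rayleigh quotient. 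One is then left with
\[
\zeta_n=\mu_n^{-\sigma}\sup_{(a,b)\neq0}\frac{|a\lambda_n^++b\chi_n\lambda_n^-|}{\sqrt{|a+b\chi_n|^2\mu_n+|a\lambda_n^++b\chi_n\lambda_n^-|^2}},
\]
and the supremum equals $1$: it is $\le1$ trivially, while the choice $a=-b\chi_n$ (with $b\neq0$) makes the first term under the square root vanish and keeps the numerator $|b\chi_n(\lambda_n^--\lambda_n^+)|\neq0$.

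The only point requiring care — and hence the main (minor) obstacle — is the non-orthogonality of the pair $\{\Phi_n^+,\Phi_n^-\}$: one must not treat these vectors as orthonormal when measuring $\|k\|_H$. Both routes neutralize this issue, the first by passing to the genuinely orthonormal mode basis, the second by the observation that the two $U$-components of every $k\in K_n$ are collinear (each proportional to $e_n$), so that $\|k\|_H^2$ splits with no cross terms. Notably, the argument uses neither the precise asymptotics \eqref{damped_stime_coefficienti_avl}--\eqref{damped_stime_coeff_avl_2} nor the distinction between the cases $\alpha\le\tfrac12$ and $\alpha\ge\tfrac12$; only the simplicity of the eigenvalues is needed.
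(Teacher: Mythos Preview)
Your proof is correct and takes a genuinely different route from the paper's. The paper argues by computing $\|\widetilde{\mathscr L}\Phi_n^\pm\|_H$ via \eqref{damped_Lambda_tilde_K_n+}--\eqref{damped_Lambda_tilde_K_n-} and then invoking the asymptotics \eqref{damped_stime_coefficienti_avl} (for $\alpha\le\tfrac12$) and \eqref{damped_stime_coeff_avl_2} (for $\alpha\ge\tfrac12$), treating the two regimes separately, to conclude that both image norms behave like $\mu_n^{-\sigma}$; it then declares $\zeta_n\sim\mu_n^{-\sigma}$ on the grounds that $\{\Phi_n^+,\Phi_n^-\}$ is a normalized basis of $K_n$. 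Your argument instead identifies $K_n$ with the full $e_n$-mode subspace and passes to its natural \emph{orthonormal} basis, where $\widetilde{\mathscr L}|_{K_n}$ is manifestly the diagonal matrix $\mathrm{diag}(0,\mu_n^{-\sigma})$, yielding $\zeta_n=\mu_n^{-\sigma}$ exactly. This buys you three things: no case distinction on $\alpha$, no appeal to the coefficient asymptotics, and an exact equality rather than an asymptotic equivalence. It also sidesteps a point the paper leaves implicit, namely that from the values $\|\widetilde{\mathscr L}\Phi_n^\pm\|_H$ for a merely normalized (non-orthogonal) basis one only recovers $\zeta_n$ up to a factor depending on the angle between $\Phi_n^+$ and $\Phi_n^-$; that angle is uniformly controlled by the asymptotics, but your approach makes this issue disappear altogether. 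Your second route via the Rayleigh quotient is equally valid and reaches the same exact value.
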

\begin{proof}
Assume that $\alpha\in \left(0,\frac12\right]$. Then, from \eqref{damped_Lambda_tilde_K_n+}, \eqref{damped_Lambda_tilde_K_n-} and \eqref{damped_stime_coefficienti_avl} we infer that
\begin{align*}
\|\widetilde{\mathscr L} \Phi_n^+\|_{H}, \|\widetilde{\mathscr L} \Phi_n^-\|_{H}
\sim \mu_n^{-\sigma}.
\end{align*}
Since $\{\Phi_n^+,\Phi_n^-\}$ is a normalized basis of $K_n$, it follows that $\zeta_n\sim \mu_n^{-\sigma}$.

As far as $\alpha\in\left[\frac12,1\right)$ is concerned, we notice that estimates \eqref{damped_stime_coeff_avl_2}, combined with \eqref{damped_Lambda_tilde_K_n+} and \eqref{damped_Lambda_tilde_K_n-}, give
\begin{align*}
\|\widetilde{\mathscr L}\Phi_n^+\|_H\sim \mu_n^{-\sigma+\frac12 -\alpha}, \quad \|\widetilde{\mathscr L}\Phi_n^-\|_H\sim \mu_n^{-\sigma}.    
\end{align*}
Since $(\mu_n)_{n\in\N}$ positively diverges as $n$ goes to infinity and $\alpha\geq \frac12$, it follows that $\mu_n^{-\sigma+\frac12-\alpha}\leq \mu_n^{-\sigma}$ for every $n\in\N$.
\end{proof}

Now we prove the following crucial estimate.
\begin{prop}
\label{prop:stima_espl}
Let us fix $T>0$. For every $\alpha\in(0,1)$ there exist positive constants $C=C(\alpha,T)$ and $\beta$ such that
\begin{align*}
\|Ae^{tA}\widetilde{\mathscr L}\|_{\mathcal L(H)}\leq C t^{-\beta}, \qquad t\in(0,T].    
\end{align*}
In particular, $\beta=\frac{(1-2\sigma)\vee0}{2\alpha}$ if $\alpha\in\left(0,\frac12\right)$, and $\beta=\left(1-\frac{\sigma}{\alpha}\right)\vee0$ if $\alpha\in\left[\frac12,1\right)$. Moreover, for every $n\in\N$ we get
\begin{align*}
\|A_ne^{tA_n}\widetilde{\mathscr L}_n\|_{\mathcal L(H_n)}\leq C t^{-\beta}, \qquad t\in(0,T],        
\end{align*}
with the same constants $C$ and $\beta$.
\end{prop}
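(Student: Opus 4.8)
The plan is to exploit the block-diagonal structure of $Ae^{tA}\widetilde{\mathscr L}$ with respect to the orthogonal decomposition $H=\bigoplus_{n\in\N}K_n$. Indeed, by Hypothesis \ref{hyp:goal-addo}(ii)(a) the subspaces $K_n=\mathrm{span}\{\Phi_n^+,\Phi_n^-\}$ are mutually orthogonal, and each $K_n$ is invariant for $A$ and $e^{tA}$ (being spanned by eigenvectors, see \eqref{damped_az_A_H+H-}--\eqref{damped_az_etA_H+H-}) as well as for $\widetilde{\mathscr L}$ (by \eqref{damped_Lambda_tilde_K_n+}--\eqref{damped_Lambda_tilde_K_n-}). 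Hence $Ae^{tA}\widetilde{\mathscr L}$ leaves every $K_n$ invariant, and its operator norm on $H$ equals $\sup_{n\in\N}\big\|(Ae^{tA}\widetilde{\mathscr L})|_{K_n}\big\|_{\mathcal L(K_n)}$. First I would compute, for each fixed $n$, the $2\times2$ matrix of the restriction in the basis $\{\Phi_n^+,\Phi_n^-\}$: applying $\widetilde{\mathscr L}$ via \eqref{damped_Lambda_tilde_K_n+}--\eqref{damped_Lambda_tilde_K_n-}, then $e^{tA}$ and $A$ diagonally on the eigenvectors, the four entries become explicit products of the scalars $\mu_n^{-\sigma}$, $\lambda_n^\pm$, $\|e_n\|_U$, $b_n^\pm$, $\chi_n$ and the factors $e^{\lambda_n^\pm t}$.

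Next I would reduce the operator norm on the two-dimensional space $K_n$ to the maximum modulus of these four entries. This requires checking that the (non-orthogonal) basis $\{\Phi_n^+,\Phi_n^-\}$ is uniformly well conditioned in $n$, i.e.\ that its Gram matrix has condition number bounded independently of $n$; this follows from the asymptotics \eqref{damped_stime_coefficienti_avl} and \eqref{damped_stime_coeff_avl_2}. Each entry then has the form $\mathrm{const}\cdot\mu_n^{p}\,e^{\mathrm{Re}(\lambda_n^\pm)t}$ with a power $p$ read off from those asymptotics, and the behaviour in $t$ is governed by the elementary bound $\sup_{x\ge x_0}x^{p}e^{-cxt}\le C\,t^{-(p\vee0)}$ for $t\in(0,T]$, applied with $x=\mu_n^\alpha$ or $x=\mu_n^{1-\alpha}$ according to the decay rate of the mode.

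For $\alpha\in\left(0,\frac12\right]$ the two modes decay at the common rate $e^{-c\mu_n^\alpha t}$ and, using $|\lambda_n^\pm|\sim\mu_n^{1/2}$, $\|e_n\|_U\sim\mu_n^{-1/2}$ and $b_n^\pm,\chi_n\sim\mathrm{const}$, every entry is $\lesssim \mu_n^{-\sigma+\frac12}e^{-c\mu_n^\alpha t}$; optimizing with $x=\mu_n^\alpha$ and exponent $\frac{1/2-\sigma}{\alpha}$ yields $\beta=\frac{(1-2\sigma)\vee0}{2\alpha}$. For $\alpha\in\left[\frac12,1\right)$ the two modes decay at the different rates $e^{-c\mu_n^{1-\alpha}t}$ (for $\lambda_n^+$) and $e^{-c\mu_n^{\alpha}t}$ (for $\lambda_n^-$), and the four entries produce four candidate exponents; inserting \eqref{damped_stime_coeff_avl_2} I expect the entry coming from the fast mode $\lambda_n^-$ to give the dominant power $\frac{-\sigma+\alpha}{\alpha}=1-\frac\sigma\alpha$, so that $\beta=\left(1-\frac\sigma\alpha\right)\vee0$. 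The main obstacle is precisely this last step: one must verify that the remaining three exponents are no larger, which amounts to a few algebraic inequalities in $\alpha,\sigma$ (they coincide at $\alpha=\frac12$, consistently with the first case, which is a useful sanity check).

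Finally, for the finite-dimensional statement I would observe that, for each $K_k\subseteq H_n$, the operators $A_n=AP_n$, $e^{tA_n}$ and $\widetilde{\mathscr L}_n=P_n\widetilde{\mathscr L}$ agree with $A$, $e^{tA}$ and $\widetilde{\mathscr L}$ on $K_k$ (using \eqref{P5} and that $P_n$ is the identity on $H_n$ while $\widetilde{\mathscr L}$ preserves $K_k$). Thus $A_ne^{tA_n}\widetilde{\mathscr L}_n$ is block-diagonal over the finitely many blocks $K_k\subseteq H_n$, each block coinciding with one already estimated, and its norm is bounded by the same supremum; this gives the stated inequality with identical constants $C$ and $\beta$.
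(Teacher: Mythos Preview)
Your approach is correct and is essentially the same as the paper's: both rest on the spectral decomposition of $A$ into the two-dimensional blocks $K_n$, the explicit asymptotics \eqref{damped_stime_coefficienti_avl}--\eqref{damped_stime_coeff_avl_2}, and the elementary bound $x^{a}e^{-bx^{d}t}\le Ct^{-a/d}$.

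The only organizational difference is that the paper does not compute the full $2\times2$ matrix of $(Ae^{tA}\widetilde{\mathscr L})|_{K_n}$. Instead it uses directly that $\widetilde{\mathscr L}k$ depends only on $k_2$ (see \eqref{damped_op_lambda_dec}), obtaining the closed expression \eqref{damped_esprAetALambdak}
\[
Ae^{tA}\widetilde{\mathscr L}k=\sum_{n}\mu_n^{-\sigma}(k_2)_n\bigl(\lambda_n^{+}e^{\lambda_n^{+}t}b_n^{+}\Phi_n^{+}+\lambda_n^{-}e^{\lambda_n^{-}t}b_n^{-}\Phi_n^{-}\bigr),
\]
so there are only \emph{two} terms per block rather than four, and the norm is controlled by $\sum_n|(k_2)_n|^2\le\|k\|_H^2$. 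This shortcut spares you the algebraic comparison of four candidate exponents and the uniform conditioning of the Gram matrix; for $\alpha\ge\tfrac12$ one is then left with exactly the two exponents $\tfrac{3-4\alpha-2\sigma}{1-\alpha}$ and $\tfrac{2\alpha-2\sigma}{\alpha}$, whose comparison reduces to the single inequality $(2\alpha-1)(\alpha+2\sigma)\ge0$. Your route works equally well, just with a couple of extra (easy) inequalities to check.
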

\begin{proof}
From \eqref{damped_az_A_H+H-} and \eqref{damped_az_etA_H+H-}, it follows that $e^{tA}h\in D(A)$ for every $t>0$ and every $h\in H$, and we deduce the explicit expression
\begin{align}
Ae^{tA}h=\sum_{n=1}^\infty\left(\lambda_n^+e^{\lambda_n^+t}\langle h^+,\Phi_n^+\rangle\Phi_n^++\lambda_n^-e^{\lambda_n^-t}\langle h^-,\Phi_n^-\rangle \Phi_n^-\right).    
\label{damped_espr_AetAh}
\end{align}
If we consider $h=\widetilde{\mathscr L} k$ for some $k\in H$, then from \eqref{damped_op_lambda_dec} and \eqref{damped_espr_AetAh} we infer that
\begin{align}
\label{damped_esprAetALambdak}
Ae^{tA}\widetilde{\mathscr L} k
= \sum_{n=1}^\infty\mu_n^{-\sigma}(k_2)_n\left(\lambda_n^+e^{\lambda_n^+t}b_n^+\Phi_n^++\lambda_n^-e^{\lambda_n^-t}b_n^-\Phi_n^-\right).
\end{align}
Let us estimate the $H$-norm of $Ae^{tA}\widetilde{\mathscr L} k$. To this aim, we separately consider the cases $\alpha\in\left(0,\frac12\right)$ and $\alpha\in\left[\frac12,1\right)$. In the following, $c$ is a positive constant which may vary line by line.

If $\alpha\in\left(0,\frac12\right)$, then, by taking advantage from \eqref{damped_stime_coefficienti_avl} and \eqref{damped_esprAetALambdak}, we deduce that
\begin{align*}
\|Ae^{tA}\widetilde{\mathscr L} k\|_H^2
\leq & 2{\rm const}(b)\left(\sum_{n=1}^\infty|\lambda_n^+|^2|e^{\lambda_n^+ t}|^2+\sum_{n=1}^\infty|\lambda_n^-|^2|e^{\lambda_n^- t}|^2 
\right)|\mu_n^{-\sigma}(k_2)_n|^2  \\
\leq & c\sum_{n=1}^\infty \mu_n^{1-2\sigma}e^{-2\rho\mu_n^\alpha t}|(k_2)_n|^2\leq ct^{-\frac{(1-2\sigma)\vee 0}{\alpha}}\|k\|_H^2,
\end{align*}
where the last inequality follows from the fact for every $a,b,d>0$, there exists $c>0$ such that
\begin{align*}
x^{a}e^{-bx^dt}\leq c t^{-\frac ad}, \qquad x\geq 1, \ t\in(0,T].
\end{align*}
This means that the $\mathcal L(H)$-norm of $Ae^{tA}\widetilde{\mathscr L}$ is summable at $t=0$ if $\sigma>\frac{1-2\alpha}{2}$.

If $\alpha\in\left[\frac12,1\right)$, then from \eqref{damped_stime_coeff_avl_2} and \eqref{damped_esprAetALambdak} we infer that
\begin{align*}
\|Ae^{tA}\widetilde{\mathscr L} k\|_H^2
\leq & 2\left(\sum_{n=1}^\infty|b_n^+\lambda_n^+|^2|e^{\lambda_n^+ t}|^2 +\sum_{n=1}^\infty|b_n^-\lambda_n^-|^2|e^{\lambda_n^- t}|^2 
\right) |\mu_n^{-\sigma}(k_2)_n|^2  \\
\leq & c\left(\sum_{n=1}^\infty \mu_n^{3-4\alpha-2\sigma}e^{-2\rho\mu_n^{1-\alpha} t}+\sum_{n=1}^{\infty}\mu_n^{2\alpha-2\sigma}e^{-2\rho\mu_n^{\alpha}t}\right)|(k_2)_n|^2 \\
\leq & c\left(t^{(-2+\frac{2\sigma+2\alpha-1}{1-\alpha})\wedge 0}+t^{(-2+\frac{2\sigma}{\alpha})\wedge 0}\right)\|k\|_H^2 \\
\leq & ct^{-2[(1-\frac{\sigma}{\alpha})\vee0]}\|k\|^2_H,
\end{align*}
since the time interval $[0,T]$ is bounded and $\alpha\in\left[\frac12,1\right)$, which give $t^{\frac{2\sigma+2\alpha-1}{1-\alpha}}\leq Ct^{\frac{2\sigma}{\alpha}}$ for some positive constant $C$ and every $t\in[0,T]$. Hence, the $\mathcal L(H)$-norm of $Ae^{tA}\widetilde{\mathscr L}$ is summable at $t=0$ if $\sigma>0$.
\end{proof}

\begin{rmk}
Now we explain why, in both the cases $\alpha\in\left(0,\frac12\right)$ and $\alpha\in\left[\frac12,1\right)$, the exponent $\beta$ has the expected value.

When $\alpha\geq \frac12$, this happens due to the fact that the operator $A$ is sectorial. Since $\|\mathscr L_n\|_{\mathcal L(H_n)}\sim \mu_n^{-\sigma}$ and $\|A_n\|_{\mathcal L(H_n)}\sim \mu_n^{\alpha}$, the action of $\mathscr L_n$ can be interpreted as the action of $A_n^{-\frac\sigma\alpha}$. As $n$ tends to infinity, one expect that the function $t\mapsto \|A_ne^{tA_n}A_n^{-\frac\sigma\alpha}\|_{\mathcal L(H_n)}$ behaves near $0$ as the function $t\mapsto \|Ae^{tA}A^{-\frac\sigma\alpha}\|_{\mathcal L(H_n)}$, which has a singularity of order $t^{-1+\frac\sigma\alpha}$ at $0$.

On the other hand, if $\alpha<\frac12$, then the operator $A$ generates a strongly continuous semigroup which is not analytic. Therefore, even if in this case $\|\mathscr L_n\|_{\mathcal L(H_n)}\sim \mu_n^{-\sigma}$ and $\|A_n\|_{\mathcal L(H_n)}\sim \mu_n^{\frac12}$, we do not expect that $\beta=1-2\sigma$, but rather a worse behavior, as indicated by the presence of the correction factor $2\alpha$ in the denominator, which is greater than $1$.
\end{rmk}

\subsubsection{The stochastic convolution}

The fact that the stochastic convolution
\begin{align*}
W_A(t):=\int_0^t e^{(t-s)A}GdW(s)    
\end{align*}
is well-defined for every $t\in[0,T]$, where $A$ and $G$ have been defined in \eqref{damped_def_op_A_G_Lambda}, follows from \cite[Proposition 5.3]{AddBig}, which proves that Hypothesis \ref{hyp:finito-dimensionale}(i) is verified.
\begin{prop}
\label{prop:damped_conv_stoc}
Let $A$ and $G$ be as in \eqref{damped_def_op_A_G_Lambda}. Assume that one of the following conditions holds true:
\begin{enumerate}[\rm(i)]
    \item $\Lambda^{-2\gamma}$ is a trace-class operator on $U$;
    \item there exist $\delta>0$ and a positive constant $c$ such that for every $n\in\N$ we have $\mu_n=c n^\delta$ and $\delta>\frac{1}{2\gamma+\alpha}$.
\end{enumerate}
Therefore, there exists $\eta\in(0,1)$ such that
\begin{align*}
\int_0^T t^{-\eta}{\rm Trace}_H\left[e^{tA}GG^*e^{tA^*}\right]dt<\infty.   
\end{align*}
\end{prop}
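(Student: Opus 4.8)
The plan is to identify the trace with a Hilbert--Schmidt norm and then reduce everything to an explicit series in the eigenvalues $\mu_n$ of $\Lambda$. Writing $R:=e^{tA}G\colon U\to H$, the Hilbert--Schmidt identity recalled in Section \ref{sec:notation} gives ${\rm Trace}_H[e^{tA}GG^*e^{tA^*}]={\rm Trace}_H[RR^*]=\norm{e^{tA}G}^2_{\mathcal L_2(U;H)}$, so, fixing the orthonormal basis $f_k:=e_k/\norm{e_k}_U$ of $U$ (the $e_k$ being mutually orthogonal since $\Lambda$ is self-adjoint with simple eigenvalues), it suffices to control $\sum_{k}\norm{e^{tA}Gf_k}^2_H$.

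First I would compute $e^{tA}Gf_k$ explicitly. Since $G=\widetilde G\Lambda^{-\gamma}$ and $\Lambda^{-\gamma}f_k=\mu_k^{-\gamma}f_k$, formula \eqref{damped_op_G_dec} together with $\scal{f_k}{f_n}_U=\delta_{kn}$ yields $Gf_k=\mu_k^{-\gamma}(b_k^+\Phi_k^++b_k^-\Phi_k^-)$; applying \eqref{damped_az_etA_H+H-} and using that $\Phi_k^{\pm}$ are eigenvectors of $A$ gives $e^{tA}Gf_k=\mu_k^{-\gamma}(b_k^+e^{\lambda_k^+t}\Phi_k^++b_k^-e^{\lambda_k^-t}\Phi_k^-)$. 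Because the two-dimensional spaces $K_k={\rm span}\{\Phi_k^+,\Phi_k^-\}$ are mutually orthogonal, the trace splits as
\[
{\rm Trace}_H[e^{tA}GG^*e^{tA^*}]=\sum_k\mu_k^{-2\gamma}\norm{b_k^+e^{\lambda_k^+t}\Phi_k^++b_k^-e^{\lambda_k^-t}\Phi_k^-}^2_H,
\]
and within each $K_k$ I would estimate the norm by the triangle inequality, the non-orthogonality of $\Phi_k^+,\Phi_k^-$ being harmless since only an upper bound is needed.

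The crux is then the asymptotic bookkeeping. Inserting \eqref{damped_stime_coefficienti_avl} when $\alpha\in(0,\frac12]$ gives bounded $b_k^{\pm}$ and $\norm{\Phi_k^{\pm}}_H$ together with ${\rm Re}(\lambda_k^{\pm})\sim-\mu_k^{\alpha}$, so that ${\rm Trace}_H[e^{tA}GG^*e^{tA^*}]\lesssim\sum_k\mu_k^{-2\gamma}e^{-2c\mu_k^{\alpha}t}$ for some $c>0$; when $\alpha\in[\frac12,1)$, \eqref{damped_stime_coeff_avl_2} gives instead $\norm{\Phi_k^{\pm}}_H\sim1$, $b_k^+\sim\mu_k^{\frac12-\alpha}$ and the two decay rates $\mu_k^{1-\alpha}$, $\mu_k^{\alpha}$, producing a bound $\lesssim\sum_k\mu_k^{-2\gamma}(\mu_k^{1-2\alpha}e^{-2c\mu_k^{1-\alpha}t}+e^{-2c\mu_k^{\alpha}t})$. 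I expect tracking these powers and decay rates across the two regimes to be the main obstacle, but in both cases the trace is dominated by series of the form $\sum_k\mu_k^{-2\gamma+a}e^{-2c\mu_k^{\nu}t}$ with $a\le0$ and $\nu\in\{\alpha,1-\alpha\}$.

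Finally I would close the estimate in each case. Under assumption (i) the factors $e^{-2c\mu_k^{\nu}t}\le1$ and $\mu_k^{a}\le1$ (for $k$ large, as $\mu_k\to\infty$) give the uniform bound ${\rm Trace}_H[e^{tA}GG^*e^{tA^*}]\le C\,{\rm Trace}_U[\Lambda^{-2\gamma}]<\infty$, so any $\eta\in(0,1)$ works because $\int_0^T t^{-\eta}dt<\infty$. Under assumption (ii) I would instead use Tonelli to exchange sum and integral and the elementary bound $\int_0^T t^{-\eta}e^{-2c\mu_k^{\nu}t}\,dt\le\Gamma(1-\eta)(2c\mu_k^{\nu})^{-(1-\eta)}$, reducing the finiteness of $\int_0^T t^{-\eta}{\rm Trace}_H[e^{tA}GG^*e^{tA^*}]\,dt$ to the convergence of series which, after substituting $\mu_k=ck^{\delta}$, become $p$-series whose exponents tend to $\delta(2\gamma+\alpha)$ as $\eta\to0^+$. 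Since $\delta(2\gamma+\alpha)>1$ strictly by hypothesis, a sufficiently small $\eta\in(0,1)$ makes every such series converge, which yields the claim.
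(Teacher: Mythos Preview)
Your argument is correct and essentially matches what the paper would do if it spelled out the proof: in the paper this proposition is simply attributed to \cite[Proposition 5.3]{AddBig} without a self-contained argument, and your direct computation via the eigenvector decomposition of $A$ is exactly the kind of calculation underlying that reference. One minor remark: the mutual orthogonality of the blocks $K_k$ is not actually needed to write ${\rm Trace}_H[e^{tA}GG^*e^{tA^*}]=\sum_k\|e^{tA}Gf_k\|_H^2$, since this is just the Hilbert--Schmidt norm computed over the orthonormal basis $\{f_k\}$ of $U$; the orthogonality observation is harmless but superfluous.
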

\subsubsection{The control problem}

For every $n\in\N$ and $t\in(0,T]$, we consider the control problem
\begin{align}
\left\{
\begin{array}{ll}
dY(\tau)=A_nY(\tau)d\tau+G_nu(\tau)d\tau,     & \tau\in(0,t], \\[1mm]
Y(0)=h\in H_n,
\end{array}
\right.
\label{damped_control_problem_n}
\end{align}
where $A_n=AP_n$, $G_n=P_nG$ and $u:[0,t]\to U$. We get
\begin{align*}
G_nu=\begin{pmatrix}
0 \\ \Lambda_n^{-\gamma}u
\end{pmatrix}
=\mu_n^{-\gamma}u_n\left(b_n^+\Phi_n^++b_n^-\Phi_n^-\right), \qquad u\in U,
\end{align*}
where $\Lambda_nu=\mu_nu_n$ with $u_n=\langle u,e_n/\|e_n\|_U\rangle_U$. From \cite{Zab08}, it follows that $e^{tA_n}(H_n)\subseteq Q_{t,n}(H_n)$ is equivalent to the null controllability of \eqref{damped_control_problem_n} and, if we set 
\begin{align*}
\mathcal E_{C,n}(t,h):=\left\{\|u\|_{L^2(0,t);U}:\eqref{damped_control_problem_n} \textrm{ admits a unique solution $Y$ with $Y(t)=0$}\right\},
\end{align*}
then $\mathcal E_{C,n}(t,h)=\Gamma_{t,n}h$. 

Here, we provide estimates similar to those of Theorems \cite[Theorems 5.4 \& 5.6]{AddBig}, for $\alpha\in\left(0,\frac12\right]$.  
\begin{thm}
\label{thm:stime_controllo_alpha<12}
Let $\alpha\in\left(0,\frac12\right]$ and $\gamma\geq0$. Then, system \eqref{damped_control_problem_n} is null-controllable. Further, for every $t>0$ there exists a positive constant $\overline c$, which depends on $\alpha$ and $\gamma$ but is independent of $n$ and $t$, if $t$ varies in a bounded interval, such that
\begin{align}
\label{stima_energia_damped_<1/2_compl}
\mathcal E_{C,n}(t,h)
\leq 
\begin{cases}
\displaystyle \frac{\overline c\|h\|_{H_n}}{t^{\frac12+\frac{\gamma}{\alpha}}}, & \gamma>\alpha, \\[5mm]
\displaystyle \frac{\overline c\|h\|_{H_n}}{t^{\frac32}}, & \gamma\leq \alpha,
\end{cases}
\qquad \forall h\in H_n.
\end{align}
Moreover, for every $\alpha\in\left(0,\frac12\right]$, for every $t>0$ there exists a positive constant $\overline c$, which depends on $\alpha$ and $\gamma$ but is independent of $n$ and $t$, if $t$ varies in a bounded interval, such that
\begin{align}
\label{stima_energia_damped_<1/2_direct_compl}
& \mathcal E_{C,n}(t,G_n a)
\leq \frac{\overline{c}\|G_na\|_{H_n}^2}{t^{\frac12+\frac{\gamma}{\alpha}}}, \qquad \forall a\in U, \ \alpha\in\left(0,\frac12\right].
\end{align}
\end{thm}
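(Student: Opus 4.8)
The plan is to use the spectral decomposition $H=\overline{\bigoplus_n K_n}$ into the two-dimensional, mutually orthogonal, $A$-invariant blocks $K_n=\mathrm{span}\{\Phi_n^+,\Phi_n^-\}$ and to reduce the whole statement to a family of scalar-input, two-dimensional control problems, one per mode, estimated uniformly in $n$. Since $A_n$, $G_n$ and the projection all respect this decomposition (see \eqref{damped_az_etA_H+H-} and \eqref{damped_op_G_dec}), the Gramian $Q_{t,n}=\int_0^te^{sA_n}G_nG_n^*e^{sA_n^*}ds$ is block diagonal; hence $\Gamma_{t,n}=Q_{t,n}^{-1/2}e^{tA_n}$ and the minimal energy $\mathcal E_{C,n}(t,h)=\|\Gamma_{t,n}h\|$ split as a direct sum over the modes, and it suffices to estimate each block and then take the supremum of the resulting constants. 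First I would write a single block, in the orthonormal coordinates induced by $e_n/\|e_n\|_U$ on each component, as
\begin{equation*}
A^{(n)}=\begin{pmatrix} 0 & \mu_n^{1/2} \\ -\mu_n^{1/2} & -\rho\mu_n^{\alpha}\end{pmatrix},\qquad G^{(n)}=\begin{pmatrix} 0 \\ \mu_n^{-\gamma}\end{pmatrix}.
\end{equation*}

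Null controllability is then the easy part. The Kalman matrix $[\,G^{(n)}\mid A^{(n)}G^{(n)}\,]$ has determinant $-\mu_n^{1/2-2\gamma}\neq0$, so each block is controllable, $Q_{t,n}$ is invertible, and, being in finite dimension, the range inclusion $e^{tA_n}(H_n)\subseteq Q_{t,n}^{1/2}(H_n)$ and the identity $\mathcal E_{C,n}(t,\cdot)=\|\Gamma_{t,n}\cdot\|$ hold. Next I would make the Gramian explicit: by the asymptotics \eqref{damped_stime_coefficienti_avl}, for $\alpha\le\frac12$ the eigenvalues $\lambda_n^{\pm}$ carry magnitude scale $|\lambda_n^{\pm}|\sim\mu_n^{1/2}$ and decay scale $\mathrm{Re}\,\lambda_n^{\pm}\sim-\mu_n^{\alpha}$, so $e^{sA^{(n)}}G^{(n)}$ is a damped oscillation of frequency $\sim\mu_n^{1/2}$ and rate $\sim\mu_n^{\alpha}$. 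Integrating its outer product over $[0,t]$ gives $Q_t^{(n)}$ up to universal constants, from which I would bound the operator norm of $\Gamma_t^{(n)}=(Q_t^{(n)})^{-1/2}e^{tA^{(n)}}$.

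The main obstacle is precisely this last estimate, carried out \emph{uniformly in $n$}. For fixed $t$ the quantity $\|\Gamma_t^{(n)}\|_{\mathcal L(K_n)}$ depends on $n$ only through the dimensionless products $\mu_n^{1/2}t$ and $\mu_n^{\alpha}t$ and through the weight $\mu_n^{-\gamma}$, and one must locate the worst mode for each small $t$. In the short-time regime $\mu_n^{1/2}t\lesssim1$ the block degenerates like a double integrator: its Gramian has eigenvalues scaling as $t$ and $t^3$ relative to the control direction, which is the origin of the $t^{-1/2}$ and $t^{-3/2}$ singularities. The weight $\mu_n^{-\gamma}$ then trades powers of $\mu_n$ for powers of $t$ across the critical balance $\mu_n^{\alpha}t\sim1$ (so that the critical mode $\mu_n\sim t^{-1/\alpha}$ contributes $\mu_n^{\gamma}\sim t^{-\gamma/\alpha}$ on top of the base $t^{-1/2}$), and optimizing over $n$ yields the stated dichotomy: the intrinsic $t^{-3/2}$ of the low-frequency modes survives when $\gamma\le\alpha$, whereas for $\gamma>\alpha$ the weight dominates and produces the exponent $\tfrac12+\tfrac{\gamma}{\alpha}$. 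I expect tracking the constants through this oscillatory, non-analytic regime, where no sectorial shortcut is available, to be the delicate point; I would control it with the elementary inequality $x^{a}e^{-bx^{d}t}\le c\,t^{-a/d}$ for $x\ge1$ already used in the proof of Proposition \ref{prop:stima_espl}.

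Finally, for the direct estimate \eqref{stima_energia_damped_<1/2_direct_compl} I would apply $\Gamma_t^{(n)}$ to the specific vector $e^{tA^{(n)}}G^{(n)}a$ instead of a generic $h$. Writing $\mathcal E_{C,n}(t,G_na)^2=\langle (Q_t^{(n)})^{-1}e^{tA^{(n)}}G^{(n)}a,\,e^{tA^{(n)}}G^{(n)}a\rangle$ and using that $G^{(n)}$ lies exactly along the control direction generating $Q_t^{(n)}$, the pairing against $e^{tA^{(n)}}G^{(n)}$ only feels the non-degenerate ($\sim t$) eigenvalue of $Q_t^{(n)}$ rather than the degenerate ($\sim t^3$) one; hence the $t^{-3/2}$ contribution drops out and only the single exponent $\tfrac12+\tfrac{\gamma}{\alpha}$ survives. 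Passing from one block to all of $H_n$ exactly as above then produces the same constants $\overline c$ and $\beta$ for every $n$, and hence delivers both \eqref{stima_energia_damped_<1/2_compl} and \eqref{stima_energia_damped_<1/2_direct_compl}.
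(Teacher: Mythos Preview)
Your route is genuinely different from the paper's. The paper does \emph{not} try to estimate the Gramian $Q_t^{(n)}$ or $\Gamma_t^{(n)}$ directly. Instead it builds, for each $h\in H_n$, an explicit (in general non-optimal) control
\[
u(\tau)=K_1\psi_t(\tau)+K_2\psi_t'(\tau),\qquad \psi_t(\tau)=-\Phi_t(\tau)e^{\tau A_n}h,
\]
with $\Phi_t(\tau)=C_m\tau^m(t-\tau)$ a polynomial bump of unit $L^1$-mass and $K_1,K_2$ the rows of $[G_n\mid A_nG_n]^{-1}$. This control steers $h$ to $0$ (this is the classical ``Kalman inverse times a smoothed trajectory'' trick from \cite{trig}), so $\mathcal E_{C,n}(t,h)\le\|u\|_{L^2(0,t;U)}$, and all the work reduces to \emph{upper} bounds on $\|K_1\psi_t\|_U$ and $\|K_2\psi_t'\|_U$. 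Those in turn are controlled, mode by mode, using only $\mu_k^{2\gamma}e^{-2\rho\mu_k^\alpha\tau}\le C\tau^{-2\gamma/\alpha}$; the $t^{-3/2}$ then appears purely from $|\Phi_t'(\tau)|\sim\tau^{m-1}/t^{m+1}$, not from any Gramian degeneracy. The same explicit $u$ is specialized to $h=G_na$ via the closed formulae \eqref{exp_Ga^+Ga^-}, and a cancellation in $K_2\psi_t'$ (the factor $1-e^{(\lambda_k^--\lambda_k^+)\tau}$ absorbs one power of $\tau$) eliminates the $t^{-3/2}$, giving \eqref{stima_energia_damped_<1/2_direct_compl}.

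Your spectral/Gramian approach is attractive and the block-diagonal reduction is correct, but there is a real gap at the point you yourself flag as ``delicate''. Bounding $\|\Gamma_t^{(n)}\|$ from above requires a \emph{lower} bound on the smallest eigenvalue of $Q_t^{(n)}$, uniformly in $n$; the inequality $x^ae^{-bx^dt}\le ct^{-a/d}$ you invoke goes the wrong way for this purpose. Moreover, the ``double-integrator'' heuristic (eigenvalues $\sim t,\,t^3$) is only valid for $\mu_n^{1/2}t\lesssim1$, while the critical mode you identify satisfies $\mu_n^{\alpha}t\sim1$ and hence $\mu_n^{1/2}t\sim t^{1-1/(2\alpha)}\to\infty$ for $\alpha<\tfrac12$; in that oscillatory regime the Gramian's spectral structure is not the short-time one, and a rigorous uniform lower bound needs a separate argument you have not supplied. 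Likewise, for \eqref{stima_energia_damped_<1/2_direct_compl}, $e^{tA^{(n)}}G^{(n)}$ is not an eigenvector of $Q_t^{(n)}$, so the claim that the pairing ``only feels the non-degenerate eigenvalue'' needs proof. The paper's explicit-control method sidesteps all of this: it never needs Gramian lower bounds, only elementary upper bounds on a concrete $u$.
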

\begin{proof}
We follow the strategy in the proof of \cite[Theorem 5.4]{AddBig}. In particular, we fix $t\in(0,T]$, $h\in H_n$ and consider the control $u:[0,t]\to U$ defined as
\begin{align}
\label{control}
u(\tau)=
\begin{cases}
K_1 \psi_t(\tau)+K_2\psi_t'(\tau), & \forall \tau\in(0,t), \vspace{1mm} \\
0, & \tau=0, \ \tau=t,
\end{cases}
\end{align}
where $\psi_t(\tau)=-\Phi_t(\tau)e^{\tau A_n}h$ for every $\tau \in(0,t)$ and $\Phi_t:[0,t]\to \mathbb R$ is defined as $\Phi_t(\tau )=C_m \tau ^m(t-\tau )$ for every $\tau\in[0,t]$, $C_m$ is a normalizing constant which gives $\|\Phi_t\|_{L^1(0,t)}=1$ and $m\in\N$ satisfies $-2\gamma/\alpha+2m>-1$. Let us notice that $\psi_t$ is differentiable in $(0,t)$ and $\psi_t'(\tau )=-\Phi_t'(\tau )e^{\tau A_n}h-\Phi_t(\tau )A_ne^{\tau A_n}h$. The operators $K_1,K_2:H_n\to U_n$ are the row of the operator $K=[G_n|A_nG_n]$, with
\begin{align*}
[G_n|A_nG_n]=\begin{pmatrix}
0 & \Lambda_n^{\frac12-\gamma} \\
\Lambda_n^{-\gamma} & -\rho\Lambda_n^{\alpha-\gamma}
\end{pmatrix}
, \qquad 
K:=[G_n|A_nG_n]^{-1}=\begin{pmatrix}
\rho\Lambda_n^{\alpha-\frac12+\gamma} & \Lambda_n^{\gamma} \\
\Lambda_n^{-\frac12+\gamma} & 0
\end{pmatrix}.    
\end{align*}
In particular,
\begin{align*}
K_1k
= & K_1\begin{pmatrix}
    k_1 \\ k_2
\end{pmatrix}= \rho \Lambda_n^{\alpha-\frac12+\gamma} k_1+\Lambda_n^{\gamma}k_2\in U_n, \\   
K_2k
= & K_2\begin{pmatrix}
    k_1 \\ k_2
\end{pmatrix}=  \Lambda_n^{-\frac12+\gamma} k_1\in U_n
\end{align*}
for every $k=\begin{pmatrix}
 k_1 \\ k_2   
\end{pmatrix}\in H_n$. \\
Arguing as in Step ${1}$ in the proof of \cite[Theorem 5.4]{AddBig}, we infer that $u$ steers $h$ at $0$ at time $t$. It remains to estimate the $L^2(0,t;U)$-norm of $u$. To this aim, we recall that
$|\Phi_t(\tau)|\sim \frac{\tau^{m}}{t^{m+1}}$ and $|\Phi_t'(\tau)|\sim \frac{\tau^{m-1}}{t^{m+1}}$. Further, in the proof of \cite[Theorem 5.4]{AddBig} it has been proved that
\begin{align}
& \rho\Lambda_n^{\alpha-\frac12+\gamma}(e^{\tau A_n}h)_1+\Lambda_n^{\gamma}(e^{\tau A_n}h)_2 \notag \\
= & \sum_{k=1}^n[e^{\lambda_k^+\tau }\langle h^+,\Phi_k^+\rangle_H(\rho \mu_k^{\alpha-\frac12+\gamma}\mu_k^{\frac12}+\mu_k^\gamma\lambda_k^+)e_k
+e^{\lambda_k^-\tau }\langle h^-,\Phi_k^- \rangle_H \chi_k(\rho \mu_k^{\alpha-\frac12+\gamma}\mu_k^{\frac12}+\mu_k^\gamma\lambda_k^-)e_k] \notag \\
= & 
\sum_{k=1}^n\mu_k^\gamma[e^{\lambda_k^+\tau }\langle h^+,\Phi_k^+\rangle_H(\rho \mu_k^{\alpha}+\lambda_k^+)e_k
+e^{\lambda_k^-\tau }\langle h^-,\Phi_k^- \rangle_H \chi_k(\rho \mu_k^{\alpha}+\lambda_k^-)e_k] \notag \\
= & -\sum_{k=1}^n\mu_k^\gamma[e^{\lambda_k^+\tau }\langle h^+,\Phi_k^+\rangle_H\lambda_k^-e_k
+e^{\lambda_k^-\tau }\langle h^-,\Phi_k^- \rangle_H \chi_k\lambda_k^+e_k].
\label{forma_K_1psi_t}
\end{align}
and that
\begin{align}
K_2\psi_t'(\tau )
= & \Lambda_n^{-\frac12+\gamma}(\Phi'_t(\tau )(e^{\tau A_n}h)_1+\Phi_t(\tau )(A_ne^{\tau A_n}h)_1) \notag \\
= & \sum_{k=1}^n\mu_k^{-\frac12+\gamma}[\Phi'_t(\tau )(e^{\lambda_k^+\tau }\langle h^+,\Phi_k^+\rangle_H+e^{\lambda_k^-\tau }\langle h^-,\Phi_k^-\rangle_H\chi_k)\mu_k^{\frac12}e_k \notag \\
& +\Phi_t(\tau )(\lambda_k^+e^{\lambda_k^+\tau }\langle h^+,\Phi_k^+\rangle_H+\lambda_k^-e^{\lambda_k^-\tau }\langle h^-,\Phi_k^-\rangle_H\chi_k)\mu_k^{\frac12}e_k] \notag \\
= & \sum_{k=1}^n\mu_k^{\gamma}[\Phi'_t(\tau )(e^{\lambda_k^+\tau }\langle h^+,\Phi_k^+\rangle_H+e^{\lambda_k^-\tau }\langle h^-,\Phi_k^-\rangle_H\chi_k)e_k \notag \\
& +\Phi_t(\tau )(\lambda_k^+e^{\lambda_k^+\tau }\langle h^+,\Phi_k^+\rangle_H+\lambda_k^-e^{\lambda_k^-\tau }\langle h^-,\Phi_k^-\rangle_H\chi_k)e_k]
\label{forma_K_2psi_t'}
\end{align}
for every $\tau \in(0,t]$. 

{\bf Step ${\bm 1}$}. We prove \eqref{stima_energia_damped_<1/2_compl}. From \eqref{damped_stime_coefficienti_avl} and \eqref{forma_K_1psi_t} we infer that
\begin{align*}
\|K_1\psi_t(\tau)\|_U^2
\leq & |\Phi_t(\tau)|^2\sum_{k=1}^n\mu_k^{2\gamma}e^{-2\rho \mu_k^\alpha\tau}(\langle h^+,\Phi_k^+\rangle_H^2+\langle h^-,\Phi_k^-\rangle_H^2) \\
\leq & |\Phi_t(\tau)|^2\tau^{-\frac{2\gamma}{\alpha}}\sum_{k=1}^n(\langle h^+,\Phi_k^+\rangle_H^2+\langle h^-,\Phi_k^-\rangle_H^2), \qquad \tau\in(0,t).
\end{align*}
It follows that
\begin{align*}
\int_0^t\|K_1\psi_t(\tau)\|_U^2d\tau\leq & t^{-2m-2}\int_0^t\tau^{2m-\frac{2\gamma}{\alpha}}d\tau\sum_{k=1}^n    (\langle h^+,\Phi_k^+\rangle_H^2+\langle h^-,\Phi_k^-\rangle_H^2) 
\leq C t^{-1-\frac{2\gamma}{\alpha}}\|h\|_{H_n}^2.
\end{align*}
Analogously, from \eqref{damped_stime_coeff_avl_2} and \eqref{forma_K_2psi_t'} we deduce that
\begin{align*}
\|K_2\psi_t'(\tau)\|_U^2
\leq & C\sum_{k=1}^n(|\Phi_t'(\tau)|^2\mu_k^{2\gamma-1}e^{-2\rho\mu_k^\alpha\tau}+|\Phi_t(\tau)|^2\mu_k^{2\gamma}e^{-2\rho\mu_k^\alpha\tau})(\langle h^+,\Phi_k^+\rangle_H^2+\langle h^-,\Phi_k^-\rangle_H^2) \\
\leq & C(|\Phi_t'(\tau)|^2\tau^{(-\frac{2\gamma-1}{\alpha})\wedge0}+|\Phi_t(\tau)|^2\tau^{-\frac{2\gamma}{\alpha}})\sum_{k=1}^n(\langle h^+,\Phi_k^+\rangle_H^2+\langle h^-,\Phi_k^-\rangle_H^2)
\end{align*}
for every $\tau\in(0,t]$. We notice that
\begin{align*}
|\Phi_t'(\tau)|^2\tau^{(-\frac{2\gamma-1}{\alpha})\wedge 0}
\leq \frac{\tau^{2m-2+(-\frac{2\gamma-1}{\alpha})\wedge 0}}{t^{2m+2}}, \quad |\Phi_t(\tau)|^2\tau^{-\frac{2\gamma}{\alpha}}
\leq \frac{\tau^{2m-\frac{2\gamma}{\alpha}}}{t^{2m+2}}, \quad \tau\in(0,t).    
\end{align*}
Since $\frac1\alpha>2$, it follows that
\begin{align*}
\|K_2\psi_t'(\tau)\|_U^2\leq 
Ct^{-2m-2}
\begin{cases}
\displaystyle \tau^{2m-2} \|h\|_{H_n}^2, & \gamma\leq \alpha, \\[1mm]
\displaystyle \tau^{2m-\frac{2\gamma}{\alpha}}  \|h\|_{H_n}^2, & \gamma >\alpha
\end{cases}
\qquad \tau\in(0,t). 
\end{align*}
Therefore, the integrability of $\tau^{2m-\frac{2\gamma}{\alpha}}$ near $0$ gives
\begin{align*}
\int_0^t\|K_2\psi_t'(\tau)\|_U^2d\tau\leq & C t^{-2m-2}\int_0^t\tau^{2m-2}d\tau\|h\|_{H_n}^2
\leq Ct^{-3}\|h\|_{H_n}^2, \qquad \gamma\leq \alpha,
\end{align*}
and
\begin{align*}
\int_0^t\|K_2\psi_t'(\tau)\|_U^2d\tau\leq & C t^{-2m-2}\int_0^t\tau^{2m-\frac{2\gamma}{\alpha}}d\tau\|h\|_{H_n}^2
\leq Ct^{-1-\frac{2\gamma}{\alpha}}\|h\|_{H_n}^2, \qquad \gamma> \alpha.
\end{align*}
{\bf Step $\bm2$}. Now, we show that \eqref{stima_energia_damped_<1/2_direct_compl} hold true. To this aim, we consider the same control $u$ defined in \eqref{control} and $a\in U$. In the proof of \cite[Theorem 5.6]{AddBig} it has been shown that
for every $k=1,\ldots,n$,
\begin{align}
\label{exp_Ga^+Ga^-}
\langle (G_na)^+,\Phi_k^+\rangle_H
= & \frac{-a_k}{(\lambda_k^--\lambda_k^+)\|e_k\|_U},\qquad
\langle (G_na)^-,\Phi_k^-\rangle_H
=  \frac{-a_k}{\chi_k(\lambda_k^+-\lambda_k^-)\|e_k\|_U}.    
\end{align}
Replacing the above expressions in \eqref{forma_K_1psi_t}, we infer that
\begin{align*}
K_1\psi_t(\tau)
= \Phi_t(\tau)\sum_{k=1}^n\frac{\mu_k^{\gamma}a_k}{(\lambda_k^--\lambda_k^+)\|e_k\|_U}[e^{\lambda_k^+\tau}\lambda_k^--e^{\lambda_k^-\tau}\lambda_k^+]e_k, \qquad \tau\in(0,t),
\end{align*}
and so, from \eqref{damped_stime_coefficienti_avl}, it follows that
\begin{align*}
\|K_1\psi_t(\tau)\|_U^2
\leq & C |\Phi_t(\tau)|^2\sum_{k=1}^n\mu_k^{2\gamma}e^{-2\rho\mu_k^\alpha\tau}a_k^2
\leq C \tau^{-\frac{2\gamma}{\alpha}}|\Phi_t(\tau)|^2\sum_{k=1}^na_k^2, \qquad \tau\in(0,t).
\end{align*}
Hence, 
\begin{align*}
\int_0^t\|K_1\psi_t(\tau)\|_U^2d\tau\leq Ct^{-2m-2}\int_0^t\tau^{2m-\frac{2\gamma}{\alpha}}d\tau\|G_na\|_{H_n}^2
\leq Ct^{-1+(-\frac{2(\gamma-\beta)}{\alpha})\wedge 0}\|G_na\|_{H_n}^2.
\end{align*}
Let us compute the $\|\cdot\|_U$-norm of $K_2\psi_t'(\tau)$ for every $\tau\in(0,t)$. From \eqref{forma_K_2psi_t'} and \eqref{exp_Ga^+Ga^-} we infer that
\begin{align*}
K_2\psi_t'(\tau)
= & -\sum_{k=1}^n\frac{\mu_k^{\gamma}}{(\lambda_k^--\lambda_k^+)\|e_k\|_U}[\Phi_t'(\tau)(e^{\lambda_k^+\tau}-e^{\lambda_k^-\tau})e_k+\Phi_t(\tau)(\lambda_k^+ e^{\lambda_k^+\tau}-\lambda_k^-e^{\lambda_k^-\tau})e_k]a_k \notag \\
= & -\sum_{k=1}^n\frac{\mu_k^{\gamma}}{(\lambda_k^--\lambda_k^+)\|e_k\|_U}[\Phi_t'(\tau)e^{\lambda_k^+\tau}(1-e^{(\lambda_k^--\lambda_k^+)\tau})e_k+\Phi_t(\tau)(\lambda_k^+ e^{\lambda_k^+\tau}-\lambda_k^-e^{\lambda_k^-\tau})e_k]a_k
\end{align*}
for every $\tau\in(0,t)$. It follows that
\begin{align*}
\|K_2\psi_t'(\tau)\|_U^2
\leq & C\sum_{k=1}^n\mu_k^{2\gamma}(|\tau\Phi_t'(\tau)|^2e^{-2\rho\mu_k^\alpha\tau}+|\Phi_t(\tau)|^2e^{-2\rho\mu_k^\alpha\tau})a_k^2 \leq C t^{-2m-2}\tau^{2m-\frac{2\gamma}{\alpha}}\|G_na\|_{H_n}^2
\end{align*}
for every $\tau\in(0,t)$. Therefore, we obtain
\begin{align*}
\int_0^t\|K_2\psi_t'(\tau)\|_U^2d\tau\leq Ct^{-2m-2}\int_0^t\tau^{2m-\frac{2\gamma}{\alpha}}d\tau \|G_na\|_{H_n}^2
\leq Ct^{-1-\frac{2\gamma}{\alpha}}\|G_na\|_{H_n}^2.
\end{align*}
\end{proof}




\subsubsection{The main result}
The following result shows that, under suitable conditions on the parameters $\alpha,\gamma,\sigma,\theta$ and $\delta$, Hypotheses \ref{hyp:finito-dimensionale} and \ref{hyp:goal-addo} are fulfilled, and so estimate \eqref{lip-scarsa} holds true. The proof of the first statement follows that lines of that of \cite[Theorem 5.1.3]{AddBig}, but we provide the details for reader's convenience. The second statement is identical to that of the quoted theorem, except from the presence of $\sigma$, hence we skip the proof. 
\begin{thm}
\label{thm:damped_main_result_alpha<12}
Assume that:
\begin{enumerate}[\rm(i)]
\item $\alpha\in\left(0,\frac12\right]$ and $\gamma\in\left[0,\frac\alpha2\right)$;
\item $\theta\in\left(\frac{2}{3}   \cdot\frac{\gamma+\alpha}{\alpha},1\right)$ and $C\in C_b^\theta(H;U)$;
\item $\sigma>\frac12-\alpha$ and $\delta>\frac{1}{2\gamma+\alpha}$;
\item the following series is finite:
\begin{align}
\label{damped_cond_path_uniq_1_alpha<12}
\sum_{n\in\N}\mu_n^{-2\sigma-\alpha}\left\|\langle C(\cdot), \frac{e_n}{\|e_n\|_U}\rangle_U\right\|_{C_b^\theta(H)}^2.  
\end{align}
\end{enumerate}
Then, \eqref{lip-scarsa} holds true.
\end{thm}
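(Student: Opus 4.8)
The plan is to deduce \eqref{lip-scarsa} from Theorem \ref{pathwiseuniqueness} by verifying that Hypotheses \ref{hyp:finito-dimensionale} and \ref{hyp:goal-addo} hold for the operators defined in \eqref{damped_def_op_A_G_Lambda}. Several items are immediate and I would dispatch them first. Hypothesis \ref{hyp:finito-dimensionale}(i) is the generation property of the damped-wave operator; items (ii)--(iv) hold with $\mathcal V=\Lambda^{-\gamma}$, $\mathcal K=\Lambda^{-\sigma}$ and $\widetilde B(h)=C(\Lambda^{-\frac12}h_1,h_2)$, where boundedness of $\mathcal V$ and $\mathcal K$ follows from $\gamma,\sigma\ge0$ and the positive type of $\Lambda$, and $\widetilde B\in C_b^\theta(H;U)$ follows from assumption (ii) since $h\mapsto(\Lambda^{-\frac12}h_1,h_2)$ is a bounded linear map; item (vi) holds for the spans $H_n$ of the eigenvectors, with $\widetilde{\mathscr L}$ acting diagonally and hence commuting with the projections. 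Item (v) is precisely Proposition \ref{prop:damped_conv_stoc}, applicable under assumption (iii), which supplies $\delta>1/(2\gamma+\alpha)$.

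The genuinely quantitative step is Hypothesis \ref{hyp:finito-dimensionale}(vii). Here I would feed the control-theoretic estimates of Theorem \ref{thm:stime_controllo_alpha<12} into \eqref{supercontron}: null controllability gives \eqref{contron}, while the energy bounds yield, using $\gamma<\alpha/2\le\alpha$, the decay rates $\|\Gamma_s\|_{\mathcal L(H)}\lesssim s^{-3/2}$ (cf. \eqref{stima_energia_damped_<1/2_compl}) and $\|\Gamma_s\widetilde G\|_{\mathcal L(U;H)}\lesssim s^{-(1/2+\gamma/\alpha)}$ (cf. \eqref{stima_energia_damped_<1/2_direct_compl}). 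Substituting these into the integrand of \eqref{supercontron} leaves the singularity exponent $\tfrac32(1-\theta)+\tfrac12+\tfrac\gamma\alpha$, which is integrable at $s=0$ exactly when $\theta>\tfrac23\cdot\tfrac{\gamma+\alpha}{\alpha}$; this is assumption (ii), and the requirement that the admissible interval for $\theta$ be nonempty is precisely $\gamma<\alpha/2$, i.e. assumption (i). Since then $\theta>\tfrac23>\tfrac13$, one may pick $\theta'\in(\tfrac13,\theta)$ to satisfy the auxiliary condition $\int_0^t\|\Gamma_s\|^{1-\theta'}_{\mathcal L(H)}ds<\infty$.

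For Hypothesis \ref{hyp:goal-addo}(i), Proposition \ref{prop:stima_espl} gives the required bound with $\beta=\frac{(1-2\sigma)\vee0}{2\alpha}$, which lies in $[0,1)$ if and only if $\sigma>\tfrac12-\alpha$, the remaining part of assumption (iii). For (ii) I would take as the family $\{f_n\}$ the eigenvectors $\Psi_n^\pm$ of $A^*$: the blocks $K_n={\rm span}\{\Psi_n^+,\Psi_n^-\}$ satisfy $d_n=2\le d$ and are mutually orthogonal since distinct blocks are built from the orthogonal $e_n$, invariance of $K_n$ under $\widetilde{\mathscr L}$ is \eqref{damped_Lambda_tilde_K_n+}--\eqref{damped_Lambda_tilde_K_n-}, and ${\rm Re}(\lambda_n^\pm)<0$. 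The heart of the matter is the series \eqref{conv_serie_holder}. Writing $B^n_j=\langle B(\cdot),\Psi_n^\pm\rangle_H=\overline{\lambda_n^\pm}\langle\widetilde B(\cdot),e_n\rangle_U$ (up to the factor $\chi_n$ for the minus sign) and using $|\lambda_n^\pm|\,\|e_n\|_U\sim1$ from \eqref{damped_stime_coefficienti_avl}, each $\|B^n_j\|_{C_b^\theta(H)}$ is comparable to $\|\langle\widetilde B(\cdot),e_n/\|e_n\|_U\rangle_U\|_{C_b^\theta(H)}$, and the latter is $\lesssim\|\langle C(\cdot),e_n/\|e_n\|_U\rangle_U\|_{C_b^\theta(H)}$ because $\widetilde B=C\circ L$ with $L$ bounded. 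Combined with $\zeta_n\sim\mu_n^{-\sigma}$ and ${\rm Re}(\lambda_n^\pm)\sim-\mu_n^\alpha$, this shows that \eqref{conv_serie_holder} is controlled by $\sum_n\mu_n^{-2\sigma-\alpha}\|\langle C(\cdot),e_n/\|e_n\|_U\rangle_U\|_{C_b^\theta(H)}^2$, which is \eqref{damped_cond_path_uniq_1_alpha<12}, finite by assumption (iv).

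The main obstacle is the quantitative matching in Hypothesis \ref{hyp:finito-dimensionale}(vii): producing the sharp decay rates $s^{-3/2}$ and $s^{-(1/2+\gamma/\alpha)}$ in the non-analytic regime $\alpha<\tfrac12$, which relies on the explicit control construction of Theorem \ref{thm:stime_controllo_alpha<12}, and checking that the two exponents combine to give exactly the threshold $\theta>\tfrac23\cdot\tfrac{\gamma+\alpha}{\alpha}$ appearing in assumption (ii). Once the asymptotics \eqref{damped_stime_coefficienti_avl} and the estimate $\zeta_n\sim\mu_n^{-\sigma}$ are in hand, the remaining verifications---in particular the reduction of \eqref{conv_serie_holder} to \eqref{damped_cond_path_uniq_1_alpha<12}---are essentially bookkeeping.
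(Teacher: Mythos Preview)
Your proposal is correct and follows essentially the same route as the paper: verify Hypotheses \ref{hyp:finito-dimensionale} and \ref{hyp:goal-addo} one by one, invoking Proposition \ref{prop:damped_conv_stoc} for the trace condition, Theorem \ref{thm:stime_controllo_alpha<12} for the control estimates underlying \eqref{supercontron}, Proposition \ref{prop:stima_espl} for the $\beta$-singularity, and then reduce \eqref{conv_serie_holder} to \eqref{damped_cond_path_uniq_1_alpha<12} via the asymptotics \eqref{damped_stime_coefficienti_avl} and $\zeta_n\sim\mu_n^{-\sigma}$. Your write-up is in fact more explicit than the paper's on the integrability check in \eqref{supercontron} (the paper simply cites Theorem \ref{thm:stime_controllo_alpha<12} and the analogous computation in \cite{AddBig}), but the argument is the same.
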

\begin{proof}
Under the above assumptions, from Proposition \ref{prop:damped_conv_stoc} we infer that Hypothesis \ref{hyp:finito-dimensionale}(ii) is fulfilled. Moreover, from Theorem \ref{thm:stime_controllo_alpha<12} it follows that Hypotheses \ref{hyp:finito-dimensionale}(vii) is verified. We also notice that Hypotheses \ref{hyp:goal-addo}(ii)(a)-(c) are satisfied with $d_n=2$, $e_1^n=\Psi_n^-$ and $e_2^n=\Psi_n^+$ for every $n\in\N$, and Hypothesis \ref{hyp:goal-addo}(i) holds true from Proposition \ref{prop:stima_espl}. It remains to check that also Hypothesis \ref{hyp:goal-addo}(ii)(d) is fulfilled. To this aim, from the definition of $\widetilde G$, $\Psi_n^+$ and $\Psi_n^-$ and recalling that $B=\widetilde G \widetilde B$, we get 
\begin{align*}
\langle B(x),\Psi_n^+\rangle_H=
\langle \begin{pmatrix}
0 \\ \widetilde B(x)    
\end{pmatrix},\begin{pmatrix}
-\mu_n^{\frac12}e_n \\ \lambda_n^+ e_n    
\end{pmatrix}\rangle_H=\lambda_n^+\langle \widetilde B(x),e_n\rangle_U
\end{align*}
and, analogously, $\langle B(x),\Psi_n^-\rangle_H=\chi_n\lambda_n^-\langle \widetilde B(x),e_n\rangle_U$ for every $n\in\N$. From \eqref{damped_stime_coefficienti_avl} and the fact that $\zeta_n\sim \mu_n^{-\sigma}$, condition \eqref{conv_serie_holder} can be written as
\begin{align}
\label{damped_serie_holder}
\sum_{n\in\N}\mu_n^{-2\sigma}\left((\lambda_n^+)^2\|e_n\|_U^2\frac{\|\langle \widetilde B(\cdot),\frac{e_n}{\|e_n\|_U}\rangle_U\|_{C_b^\theta(H)}^2}{\mu_n^{\alpha}}+\chi_n^2(\lambda_n^-)^2\|e_n\|_U^2\frac{\|\langle \widetilde B(\cdot),\frac{e_n}{\|e_n\|_U}\rangle_U\|_{C_b^\theta(H)}^2}{\mu_n^{\alpha}}\right)<\infty.    
\end{align}
From \eqref{damped_stime_coefficienti_avl}, we get $|\lambda_n^+|, |\lambda_n^-|,\|e_n\|_U\sim \mu_n^{-\frac12}$, and $\chi_n\sim {\rm const}(\chi)$ as $n$ goes to $\infty$. Therefore, the series in \eqref{damped_serie_holder}
behaves as 
\begin{align*}
\sum_{n\in\N}\mu_n^{-2\sigma-\alpha}\left\|\langle \widetilde B(\cdot),\frac{e_n}{\|e_n\|_U}\rangle_U\right\|_{C_b^\theta(H)}^2.
\end{align*}
Since there exists a positive constant $c$, independent on $n\in\N$, such that
\begin{align*}
\left\|\langle \widetilde B(\cdot),\frac{e_n}{\|e_n\|_U}\rangle_U\right\|_{C_b^\theta(H)}^2\leq c\left\|\langle C(\cdot),\frac{e_n}{\|e_n\|_U}\rangle_U\right\|_{C_b^\theta(H)}^2, \qquad n\in\N,
\end{align*}
it follows that, if \eqref{damped_cond_path_uniq_1_alpha<12} holds true, then \eqref{damped_serie_holder} is verified.
\end{proof}

\begin{thm}
\label{thm:damped_main_result_alpha>12}
Assume that:
\begin{enumerate}[\rm(i)]
\item $\alpha\in\left[\frac12,1\right)$ and $\gamma\in\left[0,\frac12-\frac\alpha2\right)$;
\item $\theta\in\left(\frac23\cdot \frac{\gamma+1-\alpha}{1-\alpha},1\right)$ if $\gamma+2\alpha<\frac32$, $\theta\in\left(\frac{4\gamma+2\alpha-1}{2\gamma+\alpha},1\right)$ if $\gamma+2\alpha\geq \frac32$ and $C\in C_b^\theta(H;U)$;
\item $\sigma>0$ and $\delta>\frac{1}{2\gamma+\alpha}$;
\item the following series is finite:
\begin{align}
\label{damped_cond_path_uniq_1_alpha>12}
\sum_{n\in\N}\mu_n^{-2\sigma-\alpha}\left\|\langle C(\cdot), \frac{e_n}{\|e_n\|_U}\rangle_U\right\|_{C_b^\theta(H)}^2.   
\end{align}
\end{enumerate}
Then \eqref{lip-scarsa} is verified.
\end{thm}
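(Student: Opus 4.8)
The plan is to follow \emph{verbatim} the structure of the proof of Theorem \ref{thm:damped_main_result_alpha<12}: I reduce the assertion to Theorem \ref{pathwiseuniqueness} by checking that the operators introduced in \eqref{damped_def_op_A_G_Lambda} (with $\mathcal V=\Lambda^{-\gamma}$ and $\mathcal K=\Lambda^{-\sigma}$) verify both Hypotheses \ref{hyp:finito-dimensionale} and \ref{hyp:goal-addo}, whence \eqref{lip-scarsa} follows. The single genuine difference with respect to the case $\alpha\in\left(0,\frac12\right]$ is that, the semigroup being now of sectorial type, the asymptotics \eqref{damped_stime_coefficienti_avl} must be replaced by \eqref{damped_stime_coeff_avl_2}, in which the two spectral branches decay at the \emph{different} rates ${\rm Re}(\lambda_n^+)\sim-\mu_n^{1-\alpha}$ and ${\rm Re}(\lambda_n^-)\sim-\mu_n^\alpha$.

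First I would dispose of Hypotheses \ref{hyp:finito-dimensionale}. Condition (i) follows from \cite[Proposition 5.3]{AddBig}, while (ii)--(iv) and (vi) are immediate from the block structure in \eqref{damped_def_op_A_G_Lambda} (in particular $\widetilde{\mathscr L}\widetilde G=\widetilde G\Lambda^{-\sigma}$ gives \eqref{commutazione}, and $H_n={\rm span}\{\Phi_1^\pm,\ldots,\Phi_n^\pm\}$ gives the required invariances). The trace condition (v) is exactly Proposition \ref{prop:damped_conv_stoc}, applicable because $\delta>(2\gamma+\alpha)^{-1}$. For the null-controllability condition (vii) I invoke \cite[Theorems 5.4 \& 5.6]{AddBig}, which cover the sectorial range $\alpha\in\left[\frac12,1\right)$ and provide the bounds on $\mathcal E_{C,n}(t,h)$ and $\mathcal E_{C,n}(t,G_na)$ needed to control $\|\Gamma_s\|_{\mathcal L(H)}$ and $\|\Gamma_s\widetilde G\|_{\mathcal L(U;H)}$. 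Plugging these bounds into \eqref{supercontron} is precisely where the two threshold regimes for $\theta$ in assumption (ii) originate, according to whether $\gamma+2\alpha<\frac32$ or $\gamma+2\alpha\geq\frac32$, i.e.\ to whether the factor $\|\Gamma_s\|^{1-\theta}$ or the factor $\|\Gamma_s\widetilde G\|$ governs the singularity of the integrand at $s=0$.

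Next I would check Hypothesis \ref{hyp:goal-addo}. Part (i) is Proposition \ref{prop:stima_espl}, which for $\alpha\in\left[\frac12,1\right)$ yields $\beta=\left(1-\frac\sigma\alpha\right)\vee0\in[0,1)$ precisely because $\sigma>0$. Parts (ii)(a)--(c) hold with $d_n=2$, $e_1^n=\Psi_n^-$ and $e_2^n=\Psi_n^+$: the subspaces $K_n={\rm span}\{\Phi_n^+,\Phi_n^-\}$ are $\widetilde{\mathscr L}$-invariant by \eqref{damped_Lambda_tilde_K_n+}--\eqref{damped_Lambda_tilde_K_n-}, and $\zeta_n\sim\mu_n^{-\sigma}$ by the Lemma preceding Proposition \ref{prop:stima_espl}. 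The eigenvalues $\lambda_n^\pm$ of $A^*$ have negative real part since $\Lambda$ is of positive type.

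The core of the proof, and the step I expect to be the main obstacle, is part (ii)(d). Writing $\langle B(x),\Psi_n^+\rangle_H=\lambda_n^+\langle\widetilde B(x),e_n\rangle_U$ and $\langle B(x),\Psi_n^-\rangle_H=\chi_n\lambda_n^-\langle\widetilde B(x),e_n\rangle_U$, recalling that the $A^*$-eigenvalue attached to $\Psi_n^\pm$ is $\lambda_n^\pm$, and using $\zeta_n\sim\mu_n^{-\sigma}$, condition \eqref{conv_serie_holder} becomes
\[
\sum_{n\in\N}\mu_n^{-2\sigma}\|e_n\|_U^2\left(\frac{|\lambda_n^+|^2}{-{\rm Re}(\lambda_n^+)}+\frac{\chi_n^2|\lambda_n^-|^2}{-{\rm Re}(\lambda_n^-)}\right)\left\|\left\langle\widetilde B(\cdot),\tfrac{e_n}{\|e_n\|_U}\right\rangle_U\right\|_{C_b^\theta(H)}^2<\infty.
\]
The delicate point is that the two summands now sit over the \emph{different} denominators $\mu_n^{1-\alpha}$ and $\mu_n^{\alpha}$, so their equality is no longer transparent as in the case $\alpha\le\frac12$; I must show they nonetheless carry the same weight. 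Feeding in \eqref{damped_stime_coeff_avl_2}, for the $+$ branch $|\lambda_n^+|^2/(-{\rm Re}(\lambda_n^+))\sim\mu_n^{2(1-\alpha)-(1-\alpha)}=\mu_n^{1-\alpha}$, while for the $-$ branch $\chi_n^2|\lambda_n^-|^2/(-{\rm Re}(\lambda_n^-))\sim\mu_n^{(1-2\alpha)+2\alpha-\alpha}=\mu_n^{1-\alpha}$ as well; multiplying by $\|e_n\|_U^2\sim\mu_n^{-1}$ and by $\mu_n^{-2\sigma}$, each branch collapses to the common weight $\mu_n^{-2\sigma-\alpha}$. Hence the series is comparable to $\sum_{n}\mu_n^{-2\sigma-\alpha}\|\langle\widetilde B(\cdot),e_n/\|e_n\|_U\rangle_U\|_{C_b^\theta(H)}^2$, which is finite by assumption (iv) together with the domination $\|\langle\widetilde B,e_n/\|e_n\|_U\rangle_U\|_{C_b^\theta}\leq c\,\|\langle C,e_n/\|e_n\|_U\rangle_U\|_{C_b^\theta}$. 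This verifies \eqref{conv_serie_holder}, and Theorem \ref{pathwiseuniqueness} then yields \eqref{lip-scarsa}.
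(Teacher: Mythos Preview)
Your proposal is correct and follows essentially the same approach as the paper; in fact, the paper skips the proof of this theorem entirely, noting only that it is identical to that of Theorem \ref{thm:damped_main_result_alpha<12} up to replacing the asymptotics \eqref{damped_stime_coefficienti_avl} by \eqref{damped_stime_coeff_avl_2}, which is precisely what you do. Your explicit verification that the two spectral branches, despite now having the different decay rates $\mu_n^{1-\alpha}$ and $\mu_n^{\alpha}$ in the denominators, still collapse to the common weight $\mu_n^{-2\sigma-\alpha}$ in \eqref{conv_serie_holder} is the key computation the paper leaves to the reader.
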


\begin{rmk}
Taking into account Proposition Theorem \ref{pathwiseuniqueneSS} and Proposition \ref{prop:damped_conv_stoc}, it is possible to drop assumption on $\delta$ and (iv) in Theorems \ref{thm:damped_main_result_alpha<12} and \ref{thm:damped_main_result_alpha>12} by assuming that $\Lambda^{-\gamma}\in\mathcal L_2(U;U)$. In this case, the mild solution of \eqref{damped_stoc_equation} satisfies estimate \eqref{lip-forte}, which is stronger than \eqref{lip-scarsa}.
\end{rmk}

We split the applications of Theorems \ref{thm:damped_main_result_alpha<12} and \ref{thm:damped_main_result_alpha>12} into separate statements, which differ in the operator $\Lambda$ we deal with. In the former, we consider $\Lambda$ as a realization of the Laplace operator, while in the latter $\Lambda$ is a realization  of the bi-Laplace operator.
\begin{coro}
\label{dampedalfa}
Let $U=L^2(0,\pi)$ and let $\Lambda=-\Delta$ be the realization of the Laplace operator in $L^2(0,\pi)$ with homogeneous Dirichlet boundary conditions. Assume that $C\in C^\theta_b(H;U)$ and that one of the following sets of assumptions are verified:
\begin{itemize}
\item[\rm(i)]  $\alpha\in\left(\frac14,\frac12\right]$, $\gamma\in\left(\frac14-\frac\alpha2,\frac\alpha2\right)$, $\theta\in\left(\frac23\cdot\frac{\gamma+\alpha}{\alpha},1\right)$ and $\sigma>\frac12-\alpha$;
\item[\rm(ii)] $\alpha\in\left[\frac12,1\right)$, $\gamma\in\left(\frac14-\frac\alpha2,\frac12-\frac\alpha2\right)\cap[0,\infty)$, $\theta\in\left(\frac23\cdot\frac{\gamma+1-\alpha}{1-\alpha},1\right)$ if $\gamma+2\alpha<\frac32$ and $\theta\in\left(\frac{4\gamma+2\alpha-1}{2\gamma+\alpha},1\right)$ if $\gamma+2\alpha\geq \frac32$, and $\sigma>0$.
\end{itemize}
Therefore, for mild solutions to  \eqref{damped_stoc_equation} formula \eqref{lip-scarsa} holds true. 
\end{coro}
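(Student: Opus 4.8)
The plan is to obtain both cases of the corollary as direct specializations of the abstract results: case (i) from Theorem~\ref{thm:damped_main_result_alpha<12} and case (ii) from Theorem~\ref{thm:damped_main_result_alpha>12}. Since those theorems already verify the structural Hypotheses~\ref{hyp:finito-dimensionale} and~\ref{hyp:goal-addo} once their four numerical conditions (i)--(iv) hold, the only task is to check that (i)--(iv) are satisfied when $\Lambda=-\Delta$ is the Dirichlet Laplacian on $L^2(0,\pi)$. First I would record the spectral data: $\Lambda$ is self-adjoint, of positive type, with compact resolvent and \emph{simple} eigenvalues $\mu_n=n^2$ and eigenvectors $e_n(\xi)=\sin(n\xi)$; in particular $\mu_n=cn^\delta$ with $c=1$ and $\delta=2$, the growth prescribed in Proposition~\ref{prop:damped_conv_stoc}(ii).

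Conditions (i) and (ii) of the abstract theorems are transcribed verbatim into the corollary, so nothing is to be done there except one remark on nonemptiness: in case (i) the interval $\gamma\in(\tfrac14-\tfrac\alpha2,\tfrac\alpha2)$ is nonempty precisely when $\tfrac14-\tfrac\alpha2<\tfrac\alpha2$, i.e.\ $\alpha>\tfrac14$, which is why the corollary restricts to $\alpha\in(\tfrac14,\tfrac12]$; in case (ii), $\alpha\ge\tfrac12$ forces $\tfrac14-\tfrac\alpha2\le0$, so intersecting with $[0,\infty)$ merely discards negative values of $\gamma$. The $\delta$-part of condition (iii), namely $\delta>\tfrac{1}{2\gamma+\alpha}$, becomes with $\delta=2$ the inequality $2(2\gamma+\alpha)>1$, i.e.\ $\gamma>\tfrac14-\tfrac\alpha2$; this is exactly the lower endpoint of the $\gamma$-interval in both cases, while the $\sigma$-part ($\sigma>\tfrac12-\alpha$ in case (i), $\sigma>0$ in case (ii)) is copied unchanged.

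The only genuine estimate is condition (iv), the convergence of $\sum_{n}\mu_n^{-2\sigma-\alpha}\|\langle C(\cdot),e_n/\|e_n\|_U\rangle_U\|_{C_b^\theta(H)}^2$. Here I would exploit that $C\in C_b^\theta(H;U)$ bounds the scalar coefficients uniformly in $n$: for any unit vector $v$ the function $x\mapsto\langle C(x),v\rangle_U$ has sup-norm and H\"older seminorm no larger than those of $C$, whence $\|\langle C(\cdot),e_n/\|e_n\|_U\rangle_U\|_{C_b^\theta(H)}\le\|C\|_{C_b^\theta(H;U)}$ for every $n$. The series is therefore dominated by $\|C\|_{C_b^\theta(H;U)}^2\sum_n\mu_n^{-2\sigma-\alpha}$, and with $\mu_n=n^2$ this is $\sum_n n^{-(4\sigma+2\alpha)}$, convergent exactly when $\sigma>\tfrac14-\tfrac\alpha2$. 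In case (i) this follows from $\sigma>\tfrac12-\alpha$, since $\tfrac12-\alpha-(\tfrac14-\tfrac\alpha2)=\tfrac{1-2\alpha}{4}\ge0$ for $\alpha\le\tfrac12$; in case (ii) it follows from $\sigma>0$, since $\tfrac14-\tfrac\alpha2\le0$ for $\alpha\ge\tfrac12$. Thus condition (iv) holds in both regimes.

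Having checked (i)--(iv), I would invoke Theorem~\ref{thm:damped_main_result_alpha<12} in case (i) and Theorem~\ref{thm:damped_main_result_alpha>12} in case (ii) to conclude that \eqref{lip-scarsa} holds for mild solutions of \eqref{damped_stoc_equation}. I do not foresee a real obstacle: the argument is essentially bookkeeping of how the single spectral exponent $\delta=2$ turns the abstract parameter constraints into the explicit ranges stated, the only quantitative ingredient being the elementary convergence of $\sum_n n^{-(4\sigma+2\alpha)}$ recorded above.
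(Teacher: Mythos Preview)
Your proposal is correct and follows essentially the same route as the paper: identify $\delta=2$ from $\mu_n=n^2$, reduce condition (iii) to $\gamma>\tfrac14-\tfrac\alpha2$, and verify condition (iv) by bounding $\|\langle C(\cdot),e_n/\|e_n\|_U\rangle_U\|_{C_b^\theta(H)}\le\|C\|_{C_b^\theta(H;U)}$ so that the series is controlled by $\sum_n n^{-(4\sigma+2\alpha)}$. Your additional remark explaining why $\alpha>\tfrac14$ is forced by nonemptiness of the $\gamma$-interval is a nice touch not made explicit in the paper.
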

\begin{proof}
Since $\mu_n\sim n^2$, condition $2=\delta>\frac{1}{2\gamma+\alpha}$ reads as $\gamma>\frac{1}{4}-\frac\alpha2$. Further, both the assumptions in (i) and (ii) ensure that conditions (i)-(iii) in Theorems \ref{thm:damped_main_result_alpha<12} and \ref{thm:damped_main_result_alpha>12} are satisfied. Finally, we show that \eqref{damped_cond_path_uniq_1_alpha<12} and \eqref{damped_cond_path_uniq_1_alpha>12} are verified under (i) or (ii).

Assume that the conditions in (i) are satisfied. Since $\|\langle C(x),e_n\|e_n\|_U^{-1}\rangle_U\|^2_{C^\theta_b(H)}\leq \|C\|_{C^\theta_b(H;U)}$ for every $n\in\N$, it follows that a sufficient condition for \eqref{damped_cond_path_uniq_1_alpha<12} to hold is
\begin{align}
\label{damped_serie_solo_mun}
\sum_{n\in\N}\mu_n^{-\alpha-2\sigma}<\infty.    
\end{align}
We notice that $\mu_n^{-\alpha-2\sigma}\sim n^{-2\alpha-4\sigma}$ and, since $\sigma>\frac12-\alpha$ and $\alpha\leq\frac12$, we infer that $-2\alpha-4\sigma<-2+2\alpha\leq -1$. Hence, \eqref{damped_serie_solo_mun} holds true.

Condition \eqref{damped_serie_solo_mun} is sufficient also when (ii) is verified (see \eqref{damped_cond_path_uniq_1_alpha>12}). In this case, since $\alpha\geq\frac12$ and $\sigma >0$, we conclude that $\mu_n^{-\alpha-2\sigma}\sim n^{-2\alpha-4\sigma}$ is summable.  
\end{proof}

\begin{coro}
\label{coro:damped_euler_beam}
Let $U=L^2((0,\pi)^m)$ and let $\Lambda=(-\Delta)^2$ be the realization of the bi-Laplace operator in $L^2((0,\pi)^m)$ with homogeneous Dirichlet boundary conditions, with $m=1,2,3$. Assume that $C\in C_b^\theta(H;U)$ and one of the following sets of assumptions in fulfilled: \begin{enumerate}[\rm(i)]
\item $\alpha\in\left(\frac m8,\frac12\right]$, $\gamma\in\left(\frac m8-\frac\alpha2,\frac\alpha2\right)\cap[0,\infty)$, $ \theta\in\left(\frac{2}{3}\cdot\frac{\gamma+\alpha}{\alpha},1\right)$, $\sigma>\frac12-\alpha$;
\item $\alpha\in \left[\frac12,1\right)$, $\gamma\in\left(\frac m8-\frac\alpha2,\frac12-\frac\alpha2\right)\cap[0,\infty)$, $\theta\in\left(\frac23\cdot\frac{\gamma+1-\alpha}{1-\alpha}\right)$ if $\gamma+2\alpha<\frac32$ and $\theta\in\left(\frac{4\gamma+2\alpha-1}{2\gamma+\alpha},1\right)$ if $\gamma+2\alpha\geq \frac32$, $\sigma>0$.
\end{enumerate}
If $m=3$, then we further assume that \eqref{damped_cond_path_uniq_1_alpha<12} or \eqref{damped_cond_path_uniq_1_alpha>12}, respectively, is satisfied. Hence, mild solutions to \eqref{damped_stoc_equation} verify estimate \eqref{lip-scarsa}.
\end{coro}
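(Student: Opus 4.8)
The plan is to reduce the statement to Theorems \ref{thm:damped_main_result_alpha<12} and \ref{thm:damped_main_result_alpha>12}, proceeding exactly as in the proof of Corollary \ref{dampedalfa}; the only genuinely new ingredient is the growth rate of the spectrum of $\Lambda=(-\Delta)^2$.

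First I would record the spectral asymptotics. On $L^2((0,\pi)^m)$ the eigenvalues of $-\Delta$ grow like $n^{2/m}$ after reordering, so the eigenvalues of $(-\Delta)^2$ satisfy $\mu_n\sim n^{4/m}$; hence the exponent $\delta$ appearing in Proposition \ref{prop:damped_conv_stoc}(ii) and in assumption (iii) of the two theorems equals $\delta=4/m$. With this value the requirement $\delta>\frac{1}{2\gamma+\alpha}$ becomes $2\gamma+\alpha>\frac m4$, i.e. $\gamma>\frac m8-\frac\alpha2$, which is exactly the lower bound imposed on $\gamma$ in both (i) and (ii). Moreover, the non-emptiness of the admissible $\gamma$-interval forces the stated lower bound on $\alpha$: in case (i) one needs $\frac m8-\frac\alpha2<\frac\alpha2$, that is $\alpha>\frac m8$, while in case (ii) one needs $\frac m8<\frac12$, which holds precisely for $m=1,2,3$. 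Together with Proposition \ref{prop:damped_conv_stoc} (which guarantees the trace condition and the well-posedness of the stochastic convolution) and Proposition \ref{prop:stima_espl} (which gives Hypothesis \ref{hyp:goal-addo}(i)), this disposes of all the analytic prerequisites of the two theorems.

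Next I would observe that, once $\delta=4/m$ is fixed, the remaining constraints on $\alpha,\theta,\sigma$ listed in (i) and (ii) are literally those of conditions (i)--(iii) of Theorems \ref{thm:damped_main_result_alpha<12} and \ref{thm:damped_main_result_alpha>12}. Thus the only point left to check is the summability condition \eqref{damped_cond_path_uniq_1_alpha<12} in case (i), respectively \eqref{damped_cond_path_uniq_1_alpha>12} in case (ii). Since $C\in C_b^\theta(H;U)$ we have $\|\langle C(\cdot),e_n/\|e_n\|_U\rangle_U\|_{C_b^\theta(H)}^2\le \|C\|_{C_b^\theta(H;U)}^2$ for every $n\in\N$, so a sufficient condition for both series to converge is
\[
\sum_{n\in\N}\mu_n^{-\alpha-2\sigma}<\infty,
\]
and, as $\mu_n^{-\alpha-2\sigma}\sim n^{-\frac4m(\alpha+2\sigma)}$, this holds if and only if $\alpha+2\sigma>\frac m4$.

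Finally I would verify this inequality dimension by dimension. In case (i) the hypothesis $\sigma>\frac12-\alpha$ together with $\alpha\le\frac12$ yields $\alpha+2\sigma>1-\alpha\ge\frac12$; in case (ii) the hypotheses $\sigma>0$ and $\alpha\ge\frac12$ yield $\alpha+2\sigma>\frac12$. Hence for $m=1$ (where $\frac m4=\frac14$) and $m=2$ (where $\frac m4=\frac12$) the inequality $\alpha+2\sigma>\frac m4$ is automatic, and the summability follows without any further hypothesis. For $m=3$, however, $\frac m4=\frac34$, and since only $\alpha>\frac m8=\frac38$ is available one cannot deduce $\alpha+2\sigma>\frac34$ from the stated bounds; this is precisely the borderline regime in which the crude estimate by $\|C\|_{C_b^\theta(H;U)}$ is insufficient, and one must instead assume \eqref{damped_cond_path_uniq_1_alpha<12} or \eqref{damped_cond_path_uniq_1_alpha>12} directly, as in the statement. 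The main (indeed the only genuine) obstacle is therefore this $m=3$ summability check; everything else is a transcription of the two theorems with $\delta=4/m$.
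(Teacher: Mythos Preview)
Your proposal is correct and follows essentially the same route as the paper's proof: identify $\delta=4/m$ from the Weyl asymptotics of the bi-Laplacian, check that the lower bound $\gamma>\frac m8-\frac\alpha2$ is precisely the condition $\delta>\frac{1}{2\gamma+\alpha}$, and then reduce the summability requirement \eqref{damped_cond_path_uniq_1_alpha<12}/\eqref{damped_cond_path_uniq_1_alpha>12} to $\alpha+2\sigma>\frac m4$ via the crude bound on the H\"older norms of the components of $C$. Your dimension-by-dimension verification that this holds automatically for $m=1,2$ and may fail for $m=3$ matches the paper's argument (which in fact also records the sufficient condition $\sigma>\frac38-\frac\alpha2$ for $m=3$).
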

\begin{proof}
The proof follows the lines of \cite[Corollary 5.11]{AddBig}, hence we left the details to the readers. We simply notice that, in this case, $\delta=\frac4m$, and so conditions (i) or (ii) ensure that assumptions (i)-(iii) in Theorem \ref{thm:damped_main_result_alpha<12} are fulfilled. 

Further, since
\begin{align*}
\mu_n^{-\alpha-2\sigma}\sim n^{-\frac{4\alpha+8\sigma}{m}}, \quad n\in\N,
\end{align*}
it follows that, if $m<3$, then the series in \eqref{damped_cond_path_uniq_1_alpha<12} and in \eqref{damped_cond_path_uniq_1_alpha>12} converge for every choice of $C$, under assumptions (i) or (ii), respectively. On the other hand, if $m=3$ then we should additionally require that the series in \eqref{damped_cond_path_uniq_1_alpha<12} is finite. This happens under additional assumptions on $C$ or, for instance, if $\frac{4\alpha+8\sigma}{3}>1$, which holds true whenever $\sigma>\frac38-\frac\alpha2$.
\end{proof}

\subsection{Stochastic damped wave equation with change of the space}
If the nonlinear term $C$ in \eqref{damped_stoc_equation} only depends on $y$, then we can exploit the approach in \cite[Example 5.8]{Dap-Zab14} and \cite{MasPri17,MasPri23}, where the space $H=H^1_0(0,\pi)\times L^2(0,\pi)$ is replaced by the space $\widetilde H= L^2(0,\pi)\times H^{-1}(0,\pi)$ in order to obtained the well-posedness of the stochastic convolution.\\
In particular, we consider a real separable Hilbert space $U$ and we deal with the nonlinear stochastic damped wave equation of the form
\begin{align}
\label{damped_wave_eq_spazio_new}
\left\{
\begin{array}{ll}
\displaystyle \frac{\partial^2y}{\partial t^2}(t)
= -\Lambda y(t)-\rho\Lambda^{\alpha}\left(\frac{\partial y}{\partial t}(t)\right)+C\left(y(t)\right)+\dot{W}(t), & t\in[0,T], \vspace{1mm} \\
y(0)=y_0, & \vspace{1mm} \\
\displaystyle \frac{\partial y}{\partial t}(0)=y_1,
\end{array}
\right.    
\end{align}
with $\Lambda:D(\Lambda)\subseteq U\to U$ is a positive self-adjoint operator with compact resolvent and simple eigenvalues and $C:U\to U$. For every $\beta\in \R$, we set $U_\beta:=D(\Lambda^{-\beta})={\rm Im}(\Lambda^{\beta})$. \\
We consider the space $H_\xi:=U_{\xi}\times U_{\xi}$ for some $\xi\in\left(0,\frac12\right]$ to be fixed later. We notice that the operator $\Lambda$ can be extended to $U_\xi$ and we denote by $\Lambda_\xi:D(\Lambda_\xi)\subseteq U_\xi\to U_\xi$ such an extension, where $D(\Lambda_\xi)=D(\Lambda^{1-\xi})$. In this new setting, the operators $A,G$ and $\widetilde{\mathscr L}$ and the function $B$ reads as
\begin{align*}
& D(A):= \left\{\begin{pmatrix}
h_1 \\ h_2    
\end{pmatrix}:h_2\in D((\Lambda_\xi)^{\frac12}), \ h_1+\rho(\Lambda_\xi)^{\alpha-\frac12}h_2\in D((\Lambda_\xi)^{\frac12})\right\}, \notag \\ 
& A:=\begin{pmatrix}
0 & (\Lambda_\xi)^{\frac12} \\
-(\Lambda_\xi)^{\frac12} & -\rho (\Lambda_\xi)^{\alpha}
\end{pmatrix}, 
\quad \widetilde G=\begin{pmatrix}
0 \\ {\rm Id}   
\end{pmatrix}, \quad \widetilde{\mathscr L}=\begin{pmatrix}
0 & 0 \\
0 & (\Lambda_\xi)^{-\xi}
\end{pmatrix}, \quad \mathcal V:=(\Lambda_\xi)^{-\xi}, \quad  G=\widetilde G(\Lambda_\xi)^{-\xi}  
\end{align*}
and $B(h)=\widetilde G\widetilde B(h)$  for every $h=\begin{pmatrix}
h_1 \\ h_2    
\end{pmatrix}\in H_\xi$, where $\widetilde B(h)=C_\xi(\Lambda^{-\frac12}h_1)$ for every $h=\begin{pmatrix}
h_1 \\ h_2    
\end{pmatrix}\in H_\xi$ and $C_\xi(z)=(\Lambda_\xi)^{\xi}C(z)\in U_\xi$ for every $z\in U$. Let us notice that $B$ is well-defined as function from $H_\xi$ into $U_\xi$: indeed, for every $u\in U_\xi$ we get $\Lambda^{-\frac12}u=\Lambda^{-\frac12+\xi}\Lambda^{-\xi}u\in U_{\frac12-\xi}\subseteq U$, since $\frac12-\xi\geq0$, and so
\begin{align*}
B(h)=\begin{pmatrix}
0 \\  (\Lambda_\xi)^{\xi}C(\Lambda^{-\frac12}h_1) 
\end{pmatrix} \in H_\xi ,
\qquad h=\begin{pmatrix}
h_1 \\ h_2    
\end{pmatrix}\in H_\xi.
\end{align*}
Moreover, we claim that if $C\in C_b^\theta(U;U)$ for some $\theta\in (0,1)$, then $\widetilde B\in C_b^\theta(H_\xi;U_\xi)$. Indeed, for every $h,k\in H_\xi$ we infer that
\begin{align*}
\|\widetilde B(h)-\widetilde B(k)\|_{U_\xi}
= & \|C_\xi(\Lambda^{-\frac12}h_1)-C_\xi(\Lambda^{-\frac12}k_1)\|_{U_\xi} 
=  \|C(\Lambda^{-\frac12}h_1)-C(\Lambda^{-\frac12}k_1)\|_{U} \\
\leq & \|C\|_{C_b^\theta(U;U)}\|\Lambda^{-\frac12}h_1-\Lambda^{-\frac12}k_1\|_{U}^\theta
\leq  \|C\|_{C_b^\theta(U;U)}\|\Lambda^{-\frac12+\xi}\|_{\mathcal L(U)}^\theta\|\Lambda^{-\xi}(h_1-k_1)\|_{U}^\theta \\
= & \|C\|_{C_b^\theta(U;U)}\|\Lambda^{-\frac12+\xi}\|_{\mathcal L(U)}^\theta\|h_1-k_1\|_{U_\xi}^\theta
\leq \|C\|_{C_b^\theta(U;U)}\|\Lambda^{-\frac12+\xi}\|_{\mathcal L(U)}^\theta\|h-k\|_{H_\xi}^\theta,
\end{align*}
which proves the claim.

It follows that equation \eqref{damped_wave_eq_spazio_new}, in its abstract form given by
\begin{align}
\label{damped_wave_eq_abstr}
dX(t)=AX(t)dt+\widetilde{\mathscr L}B(X(t))dt+GdW(t), \quad t\in[0,T], \qquad X(0)=x\in H_\xi,    
\end{align}
is a special case of the equation in Subsection \ref{sub:stoch_damp_eq}, with $U$, $H$ and $\Lambda$ replaced by $U_\xi$, $H_\xi$ and $\Lambda_\xi$, respectively, and $\sigma=\gamma=\xi$. 

We collect the main abstract results \ref{thm:damped_main_result_alpha<12} and \ref{thm:damped_main_result_alpha>12} in the following statement.

\begin{thm}
\label{thm:damped_main_result_xi}
Assume that one of the following set of assumptions is satisfied:
\begin{enumerate}[\rm(i)]
\item $\alpha\in\left(\frac13,\frac12\right]$,  $\xi\in\left(\frac12-\alpha,\frac\alpha2\right)$ and $\theta\in\left(\frac23\cdot\frac{\xi+\alpha}{\alpha},1\right)$;
\item $\alpha\in\left[\frac12,1\right)$, $\xi\in\left(0,\frac12-\frac\alpha2\right)$, $\theta\in\left(\frac23\cdot \frac{\xi+1-\alpha}{1-\alpha},1\right)$ if $\xi+2\alpha<\frac32$, $\theta\in\left(\frac{4\xi+2\alpha-1}{2\xi+\alpha},1\right)$ if $\xi+2\alpha\geq \frac32$.
\end{enumerate}
assume further that $C\in C_b^\theta(U;U)$, $\delta>\frac{1}{2\xi+\alpha}$ and the following series is finite:
\begin{align}
\label{damped_cond_path_uniq_1_xi}
\sum_{n\in\N}\mu_n^{-2\sigma-\alpha}\left\|\langle C(\cdot), \frac{e_n}{\|e_n\|_U}\rangle_U\right\|_{C_b^\theta(U)}^2.  
\end{align}
Then, \eqref{lip-scarsa} holds true.
\end{thm}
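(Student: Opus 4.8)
The plan is to view equation \eqref{damped_wave_eq_abstr} as the particular instance of the abstract damped equation of Subsection \ref{sub:stoch_damp_eq} obtained by replacing $U$, $H$ and $\Lambda$ with $U_\xi$, $H_\xi$ and $\Lambda_\xi$, and by taking $\sigma=\gamma=\xi$; I would then conclude by invoking Theorem \ref{thm:damped_main_result_alpha<12} in case (i) and Theorem \ref{thm:damped_main_result_alpha>12} in case (ii). Since $\Lambda_\xi$ retains the eigenvalues $(\mu_n)_{n\in\N}$ of $\Lambda$, all the spectral decompositions \eqref{damped_az_A_H+H-}--\eqref{damped_op_lambda_dec}, the asymptotics \eqref{damped_stime_coefficienti_avl}--\eqref{damped_stime_coeff_avl_2}, and the control estimates of Theorem \ref{thm:stime_controllo_alpha<12} transfer verbatim. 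Hence the proof reduces to checking that assumptions (i) or (ii) of the present statement imply, respectively, assumptions (i)--(iv) of Theorem \ref{thm:damped_main_result_alpha<12} or of Theorem \ref{thm:damped_main_result_alpha>12} (with $\gamma=\sigma=\xi$).

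First I would match the ranges of the parameters. In case (i) the hypothesis $\alpha\in\left(\frac13,\frac12\right]$ guarantees $\frac12-\alpha<\frac\alpha2$, so $\xi\in\left(\frac12-\alpha,\frac\alpha2\right)$ yields simultaneously $\gamma=\xi\in\left[0,\frac\alpha2\right)$ and $\sigma=\xi>\frac12-\alpha$, which are exactly assumptions (i) and (iii) of Theorem \ref{thm:damped_main_result_alpha<12}; the restrictions on $\theta$ and $\delta$ coincide verbatim. In case (ii), $\gamma=\xi\in\left(0,\frac12-\frac\alpha2\right)$ and $\sigma=\xi>0$ reproduce assumptions (i) and (iii) of Theorem \ref{thm:damped_main_result_alpha>12}, and again the conditions on $\theta$ and $\delta$ are identical. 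The Hölder regularity required for Hypotheses \ref{hyp:finito-dimensionale}(iii) is precisely the claim $\widetilde B\in C_b^\theta(H_\xi;U_\xi)$ established just before the statement starting from $C\in C_b^\theta(U;U)$.

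The only genuinely new point is the transfer of the summability condition (iv). Inspecting the proof of Theorem \ref{thm:damped_main_result_alpha<12}, its assumption (iv) is used only to force the finiteness of the series $\sum_{n}\mu_n^{-2\xi-\alpha}\|\langle\widetilde B(\cdot),e_n\|e_n\|_{U_\xi}^{-1}\rangle_{U_\xi}\|_{C_b^\theta(H_\xi)}^2$ appearing in \eqref{damped_serie_holder} (and an analogous series in the proof of Theorem \ref{thm:damped_main_result_alpha>12}), which is Hypothesis \ref{hyp:goal-addo}(ii)(d). Here $\widetilde B(h)=C_\xi(\Lambda^{-\frac12}h_1)$ with $C_\xi=(\Lambda_\xi)^\xi C$; recalling that the inner product of $U_\xi$ is $\langle u,v\rangle_{U_\xi}=\langle\Lambda^{-\xi}u,\Lambda^{-\xi}v\rangle_U$ and that $\|e_n\|_{U_\xi}=\mu_n^{-\xi}\|e_n\|_U$, a direct computation yields the exact cancellation
\begin{align*}
\left\langle C_\xi(z),\frac{e_n}{\|e_n\|_{U_\xi}}\right\rangle_{U_\xi}=\left\langle C(z),\frac{e_n}{\|e_n\|_U}\right\rangle_U,\qquad z\in U,\ n\in\N.
\end{align*}
Thus $h\mapsto\langle\widetilde B(h),e_n\|e_n\|_{U_\xi}^{-1}\rangle_{U_\xi}$ is the scalar $\theta$-Hölder map $z\mapsto\langle C(z),e_n\|e_n\|_U^{-1}\rangle_U$ precomposed with the operator $h\mapsto\Lambda^{-\frac12}h_1$, bounded from $H_\xi$ into $U$ with norm $\|\Lambda^{-\frac12+\xi}\|_{\mathcal L(U)}$ independent of $n$ (this is the very estimate used before the statement). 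Since precomposition with a fixed bounded operator multiplies the $\theta$-Hölder seminorm by a constant independent of $n$, there is $c>0$ with
\begin{align*}
\left\|\left\langle\widetilde B(\cdot),\frac{e_n}{\|e_n\|_{U_\xi}}\right\rangle_{U_\xi}\right\|_{C_b^\theta(H_\xi)}^2\leq c\left\|\left\langle C(\cdot),\frac{e_n}{\|e_n\|_U}\right\rangle_U\right\|_{C_b^\theta(U)}^2,\qquad n\in\N,
\end{align*}
so multiplying by $\mu_n^{-2\xi-\alpha}$ and summing shows that the finiteness of \eqref{damped_cond_path_uniq_1_xi} gives the required convergence. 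With all hypotheses verified, Theorem \ref{thm:damped_main_result_alpha<12} (case (i)) or Theorem \ref{thm:damped_main_result_alpha>12} (case (ii)) yields \eqref{lip-scarsa}.

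I expect the only real obstacle to be the bookkeeping of the fractional-power spaces, namely verifying that the displayed identity is an \emph{exact} equality rather than an equivalence up to constants: it is precisely the cancellation of the factors $\mu_n^{\pm\xi}$ that makes the $U$-inner-product series \eqref{damped_cond_path_uniq_1_xi} coincide with the $U_\xi$-series demanded by the abstract theorems, and any slip in the normalization convention for $U_\xi$ would spoil this matching.
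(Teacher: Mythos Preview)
Your proposal is correct and follows exactly the approach the paper intends: the paper does not give a separate proof but simply presents Theorem \ref{thm:damped_main_result_xi} as the restatement of Theorems \ref{thm:damped_main_result_alpha<12} and \ref{thm:damped_main_result_alpha>12} after the substitution $U\rightsquigarrow U_\xi$, $H\rightsquigarrow H_\xi$, $\Lambda\rightsquigarrow\Lambda_\xi$, $\sigma=\gamma=\xi$ explained in the paragraph preceding the statement. Your additional verification of the inner-product identity $\langle C_\xi(z),e_n\|e_n\|_{U_\xi}^{-1}\rangle_{U_\xi}=\langle C(z),e_n\|e_n\|_U^{-1}\rangle_U$ is the right way to make explicit why condition \eqref{damped_cond_path_uniq_1_xi} matches the abstract condition (iv), a detail the paper leaves implicit.
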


We conclude this subsection with a concrete example, analogous to Corollary \ref{dampedalfa}. In particular, we show that if we consider a nonlinear stochastic damped wave equation with nonlinearity which only depends on $y$, as in equation \eqref{damped_wave_eq_spazio_new}, then for some $\alpha\in(0,1)$ we obtain \eqref{lip-scarsa} without coloring either the drift and the noise. We also provide an example which shows that, in the deterministic case, i.e., with $G=0$, equation \eqref{damped_wave_eq_spazio_new} should be ill-posed.

\begin{coro}
\label{coro:dampedalfa_xi}
Let $U=L^2(0,\pi)$ and let $\Lambda=-\Delta$ be the realization of the Laplace operator in $L^2(0,\pi)$ with homogeneous Dirichlet boundary conditions. Assume that $C\in C^\theta_b(U;U)$ and that one of the following sets of assumptions are verified:
\begin{itemize}
\item[\rm(i)]  $\alpha\in\left(\frac13,\frac12\right]$ and $\theta\in\left(\frac1{3\alpha},1\right)$;
\item[\rm(ii)] $\alpha\in\left[\frac12,1\right)$, $\theta\in\left(\frac23,1\right)$ if $\alpha<\frac34$ and $\theta\in\left(\frac{2\alpha-1}{\alpha},1\right)$ if $\alpha\geq \frac34$.
\end{itemize}
Therefore, for mild solutions to  \eqref{damped_wave_eq_spazio_new} formula \eqref{lip-scarsa} holds true. 
\end{coro}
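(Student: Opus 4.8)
The plan is to deduce the statement directly from Theorem~\ref{thm:damped_main_result_xi}, which already packages all the abstract hypotheses in the change-of-space framework with $\sigma=\gamma=\xi$. Since $\Lambda=-\Delta$ on $L^2(0,\pi)$ with homogeneous Dirichlet boundary conditions has eigenvalues $\mu_n=n^2$, we are in the regime $\mu_n=cn^\delta$ with $\delta=2$. The exponent $\xi$ is a free parameter that we are allowed to fix, and the whole argument reduces to choosing $\xi$ so that, for the prescribed ranges of $\alpha$ and $\theta$, all the conditions of Theorem~\ref{thm:damped_main_result_xi} hold. Two of these verifications — the well-posedness condition $\delta>\frac{1}{2\xi+\alpha}$ and the convergence of the series~\eqref{damped_cond_path_uniq_1_xi} — both collapse to the single inequality $2\xi+\alpha>\tfrac12$: indeed $\delta=2>\frac{1}{2\xi+\alpha}$ is exactly $2\xi+\alpha>\tfrac12$, while since $C\in C_b^\theta(U;U)$ yields the uniform bound $\|\langle C(\cdot),e_n/\|e_n\|_U\rangle_U\|_{C_b^\theta(U)}\le\|C\|_{C_b^\theta(U;U)}$, the series~\eqref{damped_cond_path_uniq_1_xi} is dominated by $\|C\|_{C_b^\theta(U;U)}^2\sum_n n^{-2(2\xi+\alpha)}$, which converges precisely when $2\xi+\alpha>\tfrac12$.

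In case (i), with $\alpha\in(\tfrac13,\tfrac12]$, I would pick $\xi$ slightly above $\tfrac12-\alpha$. The admissible interval $(\tfrac12-\alpha,\tfrac\alpha2)$ for Theorem~\ref{thm:damped_main_result_xi}(i) is nonempty precisely because $\alpha>\tfrac13$. The key continuity observation is that the lower $\theta$-threshold $\tfrac23\cdot\frac{\xi+\alpha}{\alpha}$ is continuous and strictly increasing in $\xi$ and tends to $\tfrac23\cdot\frac{(1/2-\alpha)+\alpha}{\alpha}=\frac{1}{3\alpha}$ as $\xi\downarrow\tfrac12-\alpha$. Since the hypothesis of the corollary is exactly $\theta>\frac{1}{3\alpha}$, one can fix $\xi\in(\tfrac12-\alpha,\tfrac\alpha2)$ close enough to $\tfrac12-\alpha$ that $\theta>\tfrac23\cdot\frac{\xi+\alpha}{\alpha}$ still holds. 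For such $\xi$ one has $2\xi+\alpha>2(\tfrac12-\alpha)+\alpha=1-\alpha\ge\tfrac12$, strictly since $\xi>\tfrac12-\alpha$, so the shared inequality $2\xi+\alpha>\tfrac12$ holds and the two remaining conditions are satisfied.

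In case (ii), with $\alpha\in[\tfrac12,1)$, I would instead pick $\xi>0$ small inside $(0,\tfrac12-\tfrac\alpha2)$, splitting according to the dichotomy in Theorem~\ref{thm:damped_main_result_xi}(ii). If $\alpha<\tfrac34$ then $2\alpha<\tfrac32$, so for small $\xi$ one has $\xi+2\alpha<\tfrac32$, and the threshold $\tfrac23\cdot\frac{\xi+1-\alpha}{1-\alpha}$ tends to $\tfrac23$ as $\xi\downarrow0$; the hypothesis $\theta>\tfrac23$ then permits an admissible choice of $\xi$. If $\alpha\ge\tfrac34$ then $2\alpha\ge\tfrac32$, so $\xi+2\alpha\ge\tfrac32$ already for $\xi=0$, and the threshold $\frac{4\xi+2\alpha-1}{2\xi+\alpha}$ tends to $\frac{2\alpha-1}{\alpha}$ as $\xi\downarrow0$; the hypothesis $\theta>\frac{2\alpha-1}{\alpha}$ again yields an admissible $\xi$. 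In both subcases, since $\alpha\ge\tfrac12$ and $\xi>0$ one has $2\xi+\alpha>\alpha\ge\tfrac12$, so the shared inequality holds strictly.

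The only genuinely nontrivial point is this openness argument: the $\theta$-thresholds stated in the corollary are exactly the boundary values, at the endpoint of the admissible $\xi$-interval, of the $\xi$-dependent thresholds of Theorem~\ref{thm:damped_main_result_xi}, so passing from the corollary's hypotheses to an admissible interior $\xi$ rests on the continuity and strict monotonicity of those thresholds. Everything else — the nonemptiness of the $\xi$-intervals, the reduction of~\eqref{damped_cond_path_uniq_1_xi} to a convergent $p$-series via the uniform Hölder bound, and the equivalence of the two surviving conditions to $2\xi+\alpha>\tfrac12$ — is routine. Once an admissible $\xi$ is fixed, Theorem~\ref{thm:damped_main_result_xi} delivers~\eqref{lip-scarsa} immediately.
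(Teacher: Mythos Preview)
Your argument is correct and follows essentially the same route as the paper's own proof: both reduce the statement to choosing an admissible $\xi$ and verifying the hypotheses of the change-of-space result (the paper phrases this via Corollary~\ref{dampedalfa} with $\sigma=\gamma=\xi$, you via Theorem~\ref{thm:damped_main_result_xi}, which is equivalent). The only cosmetic difference is that where you use a continuity/openness argument to assert the existence of an admissible $\xi$ near the endpoint, the paper writes down the explicit intervals for $\xi$ (e.g.\ $\xi\in(\tfrac12-\alpha,\tfrac32\theta\alpha-\alpha)$ in case~(i)); these are exactly the intervals on which your monotone thresholds stay below~$\theta$.
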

\begin{proof}
At first, we recall that $\xi=\sigma=\gamma$. Further, since $\delta=2$ then $\delta>\frac{1}{2\xi+\alpha}$ is equivalent to $\xi>\frac14-\frac\alpha2=\frac12\left(\frac12-\alpha\right)$, which is always verified if $\alpha\geq\frac12$ since $\xi$ will be chosen greater than $0$.

Assume that (i) is satisfied. Notice that if $\theta\in\left(\frac1{3\alpha},1\right)$, then $\frac12<\frac32\theta\alpha$ and $\frac32\theta\alpha-\alpha<\frac\alpha2$. Hence, if we choose $\xi\in\left(\frac12-\alpha,\frac32\theta\alpha-\alpha\right)$, then $\xi>\frac14-\frac\alpha2$ and $\theta\in\left(\frac23\cdot\frac{\xi+\alpha}{\alpha},1\right)$. It follows that condition (i) in Corollary \ref{dampedalfa} are fulfilled and this implies that \eqref{lip-scarsa} holds true with $H$ replaced by $H_\xi$.

If condition (ii) is verified, we separately consider the cases $\alpha<\frac34$ and $\alpha\geq \frac34$. \\ If $\alpha\in \left[\frac12,\frac34\right)$, which implies that $\theta\in\left(\frac23,1\right)$, then $0<\left(\frac32\theta-1\right)(1-\alpha)<\frac12-\frac\alpha2$ and $\frac32-2\alpha>0$. Therefore, the choice $\xi\in\left(0,\left(\frac32\theta(1-\alpha)-(1-\alpha)\right)\wedge\left(\frac32-2\alpha\right)\right)\subseteq \left(0,\frac12-\frac\alpha2\right)$ gives $\xi+2\alpha<\frac32$ and $\theta\in\left(\frac23\cdot\frac{\xi+1-\alpha}{1-\alpha},1\right)$. \\
If $\alpha\in\left[\frac34,1\right)$, then $2\alpha\geq \frac32$ and $0<\frac{1}{2(2-\theta)}-\frac\alpha2<\frac12-\frac\alpha2$. Hence, if we take $\xi\in\left(0,\frac{1}{2(2-\theta)}-\frac\alpha2\right)\subseteq \left(0,\frac12-\frac\alpha2\right)$, then $\xi+2\alpha\geq \frac32$ and $\theta\in\left(\frac{4\xi+2\alpha-1}{2\xi+\alpha},1\right)$. \\
In both the cases, condition (ii) in Corollary \ref{dampedalfa} is fulfilled and therefore we obtain \eqref{lip-scarsa} in the space $H_\xi$.
\end{proof}
\begin{rmk}
We stress that, even if we change the space where we look for a solution, the function $y$ is a true function and not a distribution, since $y(t)=\Lambda^{-\frac12}X_1(t)\in U=L^2(0,\pi)$ for every $t\in[0,T]$.     
\end{rmk}

The following result has been obtained in \cite[Subsection 5.1.4]{AddBig} and shows that the deterministic counterpart of equation \eqref{damped_wave_eq_spazio_new} should be ill-posed.
\begin{coro}
In $U=L^2(0,\pi)$ we consider the semilinear damped wave equation
\begin{align}
\label{Count_det_damped_wave_eq_1}
\left\{
\begin{array}{lll}
\displaystyle \frac{\partial^2y}{\partial\tau^2}(\tau,\xi)
= \frac{\partial^2y}{\partial\xi^2}(\tau,\xi)-\left(-\frac{\partial^2}{\partial_\xi^2}\right)^{\frac7{12}} \!\frac{\partial y}{\partial\tau} (\tau,\xi)+c(\xi,y(\tau,\xi)), & \xi\in [0,\pi], & \tau\in[0,1],  \\
y(\tau,0)=y(\tau,\pi)=0, & & \tau\in[0,1], \\
\displaystyle y(0,\xi)=\frac{\partial y}{\partial \xi}(0,\xi)=0, & \xi \in [0,\pi],
\end{array}
\right.
\end{align}
where, for every $\xi\in[0,\pi]$ and $y\in\R$,
\begin{align*}
c(\xi,y)
=  & \varphi(y)\left(56({\rm sgn}(\sin(2\xi)))|\sin(2\xi)|^{\frac14}|y|^{\frac34} 
+8\cdot 4^{\frac7{12}}({\rm sgn}(\sin(2\xi)))|\sin(2\xi)|^{\frac18}|y|^{\frac7{8}} +4y\right)
\end{align*}
and $\varphi\in C_c^\infty(\R)$ satisfies $0\leq \varphi\leq 1$, $\varphi\equiv1$ in $(-2,2)$ and $\varphi\equiv0$ in $(-3,3)^c$. Then, both $y_1(\tau,\xi)=0$ and $y_2(\tau,\xi)=\tau^8\sin(2\xi)$ for every $(t,\xi)\in[0,T]$ are solutions to \eqref{Count_det_damped_wave_eq_1}.
\end{coro}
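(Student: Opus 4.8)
The plan is to verify directly that both candidate functions solve \eqref{Count_det_damped_wave_eq_1}. Since this is the deterministic ($G=0$) version of the abstract equation and both candidates are smooth, it suffices to check the equation pointwise together with the boundary and initial conditions. First I would dispose of $y_1\equiv0$: the boundary and initial conditions are trivially met, every derivative vanishes, and by construction $c(\xi,0)=\varphi(0)\cdot 0=0$, because each summand of $c(\xi,y)$ carries either a positive power of $|y|$ or the factor $y$ itself. Hence $0=0-0+c(\xi,0)=0$ and $y_1$ is a solution.

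For $y_2(\tau,\xi)=\tau^8\sin(2\xi)$ I would first record the structural facts. As $\sin(2\xi)$ is the second Dirichlet eigenfunction of $-\partial_\xi^2$ on $(0,\pi)$, with eigenvalue $4$, we have $\partial_\xi^2 y_2=-4\tau^8\sin(2\xi)$ and $(-\partial_\xi^2)^{7/12}(\partial_\tau y_2)=8\cdot 4^{7/12}\tau^7\sin(2\xi)$, while $\partial_\tau^2 y_2=56\tau^6\sin(2\xi)$. The boundary conditions hold because $\sin(2\xi)$ vanishes at $\xi=0,\pi$, and the initial conditions hold because $y_2$ and its derivatives carry a factor vanishing at $\tau=0$. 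The crucial observation is that for $(\tau,\xi)\in[0,1]\times[0,\pi]$ one has $|y_2|=\tau^8|\sin(2\xi)|\le 1<2$, so the cutoff is inactive, i.e. $\varphi(y_2)\equiv1$.

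The heart of the computation is the simplification of $c(\xi,y_2)$. Using $|y_2|^{3/4}=\tau^6|\sin(2\xi)|^{3/4}$ and $|y_2|^{7/8}=\tau^7|\sin(2\xi)|^{7/8}$, in each H\"older summand the total power of $|\sin(2\xi)|$ equals $1$ (namely $1/4+3/4$ and $1/8+7/8$), so that ${\rm sgn}(\sin 2\xi)|\sin 2\xi|=\sin(2\xi)$ and the two summands collapse to $56\tau^6\sin(2\xi)$ and $8\cdot 4^{7/12}\tau^7\sin(2\xi)$ respectively, while the linear term is $4\tau^8\sin(2\xi)$. Substituting into the right-hand side $\partial_\xi^2 y_2-(-\partial_\xi^2)^{7/12}\partial_\tau y_2+c(\xi,y_2)$ and collecting powers of $\tau$, the $\tau^8$ contributions ($-4+4$) and the $\tau^7$ contributions ($-8\cdot 4^{7/12}+8\cdot 4^{7/12}$) cancel, leaving exactly $56\tau^6\sin(2\xi)=\partial_\tau^2 y_2$, which is the desired identity.

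The only points requiring care, rather than a genuine obstacle, are bookkeeping: matching the fractional exponent $7/12$ against the eigenvalue $4$, tracking the sign function so that the accumulated power of $|\sin 2\xi|$ is exactly $1$ in each H\"older term, and checking that the cutoff never switches on. Indeed $c$ is reverse-engineered precisely so that its two H\"older summands annihilate $\partial_\xi^2 y_2$ and $(-\partial_\xi^2)^{7/12}\partial_\tau y_2$ and its linear term is absorbed, leaving the pure time-derivative balance. I would therefore present these cancellations explicitly and conclude that $y_1$ and $y_2$ are two distinct solutions sharing the same zero data, whence ill-posedness.
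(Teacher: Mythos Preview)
Your proposal is correct and is exactly the direct verification one expects; the paper itself gives no proof here, merely citing \cite[Subsection 5.1.4]{AddBig}, where the same computation is carried out. One cosmetic point: in your final paragraph the attribution of which summand cancels which is slightly garbled (it is the linear term $4y$ that cancels $\partial_\xi^2 y_2$, the second H\"older summand that cancels the damping term, and the first H\"older summand that produces $\partial_\tau^2 y_2$), but your explicit power-of-$\tau$ bookkeeping in the preceding paragraph is correct and is what actually establishes the identity.
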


\section*{Statements and Declarations}

\textbf{Competing Interests} Authors are required to disclose financial or non-financial interests that are directly or indirectly related to the work submitted for publication. Please refer to “Competing Interests and Funding” below for more information on how to complete this section.\\

\textbf{Fundings} The authors are members of GNAMPA (Gruppo Nazionale per l'Analisi Matematica,
la Probabilit\`a e le loro Applicazioni) of the Italian Istituto Nazionale di Alta Matematica (INdAM). D. Addona has been partially supported by the INdaMGNAMPA project 2023 ``Operatori ellittici vettoriali a coefficienti illimitati in spazi $L^p$'' CUP E53C22001930001. 
D. A. Bignamini has been partially supported by the INdaMGNAMPA
Project 2023 ``Equazioni differenziali stocastiche e operatori di Kolmogorov in dimensione infinita'' CUP E53C22001930001. 
The authors have no relevant financial or non-financial interests to disclose.\\

\textbf{Acknowledgments} The authors are really grateful to Enrico Priola for the inspiring discussions about weak existence of a mild solution to \eqref{SDE}.

\end{document}